\newcommand{\llangle}{\langle\!\langle}
\newcommand{\rrangle}{\rangle\!\rangle}
\newcommand{\diagnode}[1]{\fill #1 circle (.1);}
\newcommand{\distorbit}[1]{\draw #1 circle (.2);}
\newcommand{\distlongorbit}[1]{\draw (#1 - .2, .5) -- (#1 - .2, -.5) arc (-180:0:.2) -- (#1 + .2, -.5) -- (#1 + .2, .5) arc (0:180:.2) -- cycle;}
\newtheorem{theorem}{Theorem}[section]
\newtheorem{lemma}[theorem]{Lemma}
\newtheorem{proposition}[theorem]{Proposition}
\theoremstyle{definition}
\newtheorem{remark}[theorem]{Remark}
\newtheorem{definition}[theorem]{Definition}
\newtheorem{notation}[theorem]{Notation}
\newtheorem{construction}[theorem]{Construction}
\newtheorem*{construction*}{Construction}
\numberwithin{equation}{section}
\newcommand{\Z}{\mathbb{Z}} 
\newcommand{\Hh}{\mathbb{H}} 
\newcommand{\A}{\mathcal{A}}
\newcommand{\Ss}{\mathcal{S}}
\newcommand{\bt}{\bullet} 
\newcommand{\Xt}{\widetilde{X}} 
\newcommand{\xt}{\widetilde{x}} 
\newcommand{\minuszero}{\setminus\{0\}} 
\newcommand{\al}{\alpha} 
\newcommand\half{{\tfrac{1}{2}}}
\newcommand{\subhalf}{{\leavevmode \raise.5ex\hbox{\the\scriptfont0 1}\kern-.13em /\kern-.07em\lower.25ex\hbox{\the\scriptfont0 2}}}
\newcommand{\lsk}{{\mbox{\boldmath$\pmb($}}}
\newcommand{\rsk}{{\mbox{\boldmath$\pmb)$}}}
\newcommand{\ga}{\gamma}
\newcommand{\la}{\lambda}
\newcommand{\si}{\sigma}
\newcommand{\chacha}{characteristic}
\newcommand{\ii}{\mathbf{i}}
\newcommand{\jj}{\mathbf{j}}
\newcommand{\kk}{\mathbf{k}}
\DeclareMathOperator{\Char}{char}
\DeclareMathOperator{\Gal}{Gal}
\DeclareMathOperator{\End}{End}
\DeclareMathOperator{\Fix}{Fix}
\DeclareMathOperator{\cha}{char}
\begin{document}

\title{A new construction of Moufang quadrangles of type $E_6, E_7$ and $E_8$}
\author{Lien Boelaert%
\quad Tom De Medts%
}
\date{\today}
\maketitle

\begin{abstract}
	In the classification of Moufang polygons by J.~Tits and R.~Weiss, the most intricate case is by far the case of the exceptional Moufang quadrangles of type $E_6$, $E_7$ and $E_8$, and in fact, the construction that they present is ad-hoc and lacking a deeper explanation.
	We will show how tensor products of two composition algebras can be used to construct these Moufang quadrangles in characteristic different from 2.

	As a byproduct, we will obtain a method to construct {\em any} Moufang quadrangle
	in characteristic different from two from a module for a Jordan algebra.
\end{abstract}

\vspace*{2ex} \noindent
{\tt MSC-2010}: primary: 17A75, 17A40, 17C40, 20G15, 20G41; secondary: 17C27, 51E12.

\noindent
{\tt keywords}: Moufang polygons, Moufang quadrangles, composition algebras, octonion algebras,
 quadrangular algebras, Jordan algebras, structurable algebras, $J$-ternary algebras,
linear algebraic groups, exceptional groups, $E_6$, $E_7$, $E_8$

\newpage
\tableofcontents
\newpage

\section{Introduction}

In the late sixties, Jacques Tits introduced an (at that time) innovative tool to study semisimple linear algebraic groups of
positive relative rank, namely the theory of spherical buildings.
Especially in the case of the exceptional groups, these buildings are quite often a main effective tool,
and the algebraic description that goes with them, is invaluable in order to perform explicit calculations
involving exceptional groups.

The cases where the relative rank is at least two are relatively well understood,
mainly because of the work of Jacques Tits and Richard Weiss in the theory of Moufang polygons \cite{TW}.
However, for the Moufang quadrangles of exceptional type, mainly those of type $E_6$, $E_7$ and $E_8$,
the construction is rather ad-hoc, and a deeper explanation is still missing.

The goal of our paper is to give an explicit but at the same time completely intrinsic method to construct a family of rank two groups corresponding to the Moufang quadrangles of type $E_6$, $E_7$ and $E_8$ in \chacha\ different from two.

We will be dealing with the forms given by the following Tits indices:
\[
\begin{tikzpicture}[line width=1pt, scale=.55]
	\node[left=5pt] (0,0) {$^2\!E_6:$};
	\draw (0,0) -- (1,0);
	\draw (1,0) arc (180:90:.5) -- (3,.5);
	\draw (1,0) arc (180:270:.5) -- (3,-.5);
	\diagnode{(0,0)}
	\diagnode{(1,0)}
	\diagnode{(2,.5)} \diagnode{(2,-.5)}
	\diagnode{(3,.5)} \diagnode{(3,-.5)}
	\distorbit{(0,0)}
	\distlongorbit{3}
\end{tikzpicture}
\quad
\begin{tikzpicture}[line width=1pt, scale=.55]
	\node[left=5pt] (0,0) {$E_7:$};
	\draw (0,0) -- (5,0);
	\draw (2,0) -- (2,1);
	\diagnode{(0,0)}
	\diagnode{(1,0)}
	\diagnode{(2,0)} \diagnode{(2,1)}
	\diagnode{(3,0)}
	\diagnode{(4,0)}
	\diagnode{(5,0)}
	\distorbit{(0,0)}
	\distorbit{(4,0)}
\end{tikzpicture}
\quad
\begin{tikzpicture}[line width=1pt, scale=.55]
	\node[left=5pt] (0,0) {$E_8:$};
	\draw (0,0) -- (6,0);
	\draw (2,0) -- (2,1);
	\diagnode{(0,0)}
	\diagnode{(1,0)}
	\diagnode{(2,0)} \diagnode{(2,1)}
	\diagnode{(3,0)}
	\diagnode{(4,0)}
	\diagnode{(5,0)}
	\diagnode{(6,0)}
	\distorbit{(0,0)}
	\distorbit{(6,0)}
\end{tikzpicture}
\]
Observe that all three forms have the property that their anisotropic kernel is of type $D_n$ (with an additional factor $A_1$ for the form of type $E_7$); this implies that these forms will be determined by an anisotropic quadratic form of dimension $2n$ having certain additional properties.
(The $A_1$ factor in the $E_7$ case gives rise to a quaternion division algebra, which turns up in the description of the Hasse invariant of the quadratic form corresponding to the $D_n$ factor.)
We will refer to such quadratic forms as forms of type $E_6$, $E_7$ and $E_8$, respectively.

In each case, we will see that the quadratic form can be characterized as the anisotropic part of the {\em Albert form} of a certain tensor product of composition algebras, and in fact, these algebras themselves will play a crucial role in the understanding of the corresponding algebraic groups;
they completely determine the algebraic group up to isogeny.

Our approach will turn out to be applicable in a more general situation, and in fact,
we will obtain {\em every possible} Moufang quadrangle defined over a field of characteristic different from two
starting from certain modules over a Jordan algebra.
Our construction relies in an essential way on the theory of $J$-ternary algebras and their Peirce decomposition, see \cite[Sections~3.12 and~6.61]{ABG}.

We can summarize our main result, namely the explicit construction of the quadrangular algebras of type $E_6$, $E_7$ and $E_8$, as follows.
\begin{construction*}Let $\cha(k)\neq2$. We start with a quadratic space $(k,V, q)$ of type $E_6$, $E_7$ or $E_8$ with base point
(see also Definition~\ref{def:e6e7e8} below).

By Theorem \ref{th:e6e7e8}, there exist an octonion division algebra $C_1$  and a division composition algebra $C_2$ of dimension $2$, $4$ or $8$, respectively such that $C_1$ and $C_2$ contain an isomorphic quadratic field extension, but no isomorphic quaternion algebra, and such that $q$ is similar to the anisotropic part of the Albert form, $q_A$, of $C_1\otimes_k C_2$. It follows that there exist $\ii_1\in C_1$ and $\ii_2\in C_2$ such that $\ii_1^2=\ii_2^2=a\in k\setminus\{k^2\}$. 

We define a subspace $V$ of the skew-elements of $C_1\otimes_k C_2$ of dimension $6$, $8$ or $12$, respectively, as\footnote{The orthogonal complement is taken w.r.t.\@ the bilinear form associated to the Albert form; this quadratic form is defined on the skew elements of $C_1\otimes_k C_2$.} 
\[V:=\langle  \ii_1\otimes1,1\otimes \ii_2\rangle^\perp.\] 
We choose an arbitrary $u\in V\setminus\{0\}$ and define the quadratic form
\[ Q:=\frac{1}{q_A(u)}q_A|_V ; \]
this form has base point $u$ and is similar to the quadratic form of type $E_6, E_7$ or $E_8$ we started with.

We then define the subspace $X_0$ of $C_1\otimes_k C_2$ of dimension $8$, $16$ or $32$ as 
\[X_0:= \Bigl\langle \bigl( x \otimes y + \tfrac{1}{a} \, \ii_1 x \otimes \ii_2 y \bigr) \mid x \in C_1, y \in C_2\Bigr\rangle .\]
Next, we define a suitable element $r\in \Ss$ as in Definition \ref{def:Je0e1u}\eqref{defr} below,
and we define the bilinear map $X_0\times L_0\rightarrow X_0$ as \[x\cdot v=v(r(u(r x))),\] 
and the bilinear map $h \colon X_0\times X_0\rightarrow V$ as \[h(x,y)=(u(r x))\overline{y}-y ((\overline{x}r)u).\]

In Theorem \ref{th:E6E7E8} we prove that the $7$-tuple $(k,V,Q,u,X_0,\cdot,h)$ is a quadrangular algebra of type $E_6$, $E_7$ or $E_8$, respectively.
It follows that this is the structure described in \cite[Chapter 13]{TW}
giving rise to the Moufang quadrangles of type $E_6$, $E_7$ and $E_8$, and hence to the corresponding
rank two forms of exceptional linear algebraic groups of type $E_6$, $E_7$ and $E_8$.
\end{construction*}

\paragraph{Organization of the paper}
In Section~\ref{sec2} we give some preliminar material on composition algebras, tensor products of composition algebras, quadrangular algebras and Peirce decomposition in Jordan algebras.

In Section~\ref{sec3} we work towards the main theorem of our paper, which is Theorem \ref{th:E6E7E8} and which gives a construction of quadrangular algebras of type $E_6$, $E_7$ and $E_8$. 

In Section~\ref{sec31} we show that we can construct  a quadrangular algebra in characteristic not 2 starting from a specific kind of module for a Jordan algebra of reduced spin type (see Definition \ref{def: redspin}).

In Section~\ref{sec32} we construct, in a similar way,  a quadrangular algebra starting from a $J$-ternary algebra over a Jordan algebra of reduced spin type. This section deals only with fields of \chacha\ not 2 nor 3, since $J$-ternary algebras are only defined over such fields.

In Section~\ref{sec33} we show that we can apply this procedure to obtain each quadrangular algebra of pseudo-quadratic form type in \chacha\ not $2$.
In Section~\ref{sec34} we construct (see the construction above) a module for a Jordan algebra of reduced spin type out of the tensor product of two composition algebras and show that this gives rise to quadrangular algebras of type $E_6,E_7$ and $E_8$ in \chacha\ not 2.

In Section~\ref{sec4}, inspired by Theorem \ref{constr quad alg}, we give a uniform description of all Moufang quadrangles in \chacha\ different from two.

\paragraph*{Acknowledgments}

Some of our ideas were inspired by fruitful discussions with Skip Garibaldi, in particular during a longer visit of the first author at Emory University,
whose hospitality is gratefully acknowledged.
Skip Garibaldi's observation mentioned in Theorem~\ref{th:e6e7e8} was a crucial first step in the whole project.
We also thank Bruce Allison for fruitful discussions about $J$-ternary algebras.
Last but not least, we are greatly indebted to the referee for a spectacularly detailed and careful reading of our paper.

\section{Preliminaries}\label{sec2}

We assume throughout the paper that $k$ is a commutative field of characteristic different from 2.
\subsection{Composition algebras}

A {\em composition algebra} is a, not necessarily commutative nor associative, unital $k$-algebra $C$ equipped with a quadratic form $q\colon C\to k$ that is multiplicative, i.e.\@ $q(xy)=q(x)q(y)$ for all $x,y\in C$. This quadratic form $q$ is called the {\em norm form}, its associated bilinear form will be denoted by $f$. With the norm form we associate an involution on $C$ by defining
\[\si\colon C\to C\colon x\mapsto\overline{x}:=f(x,1)1-x.\] 
By a classical result (see for example \cite[Theorem 1.6.2]{SV}) each composition algebra has dimension 1, 2, 4 or 8:
\begin{compactenum}[\rm (i)]
	    \item
		If $\dim_k C = 1$, then $C=k$, $q(x)=x^2$ and the involution is trivial.
	    \item
		If $\dim_k C = 2$, then $C/k$ is a {\em quadratic \'etale extension} of $k$. There exists $a\in k$ such that $C=k[\ii]/(\ii^2-a)$, the norm form is 				$\langle 1,-a\rangle$.
		Either $C/k$ is a separable quadratic field extension and $\sigma$ is the non-trivial
		element of $\Gal(C/k)$, or $C \cong k \oplus k$ and $\sigma$ interchanges the two components.
	    \item
		If $\dim_k C = 4$, then $C/k$ is a {\em quaternion algebra} over $k$. There exist $a,b\in k$ such that $C=k\oplus k\ii\oplus k \jj\oplus k(\ii\jj)$ with multiplication defined by \[\ii^2=a, \jj^2=b,\ii\jj=-\jj\ii.\] This quaternion algebra is denoted by $(a,b)_k$. The norm form is equal to $\langle1, -a\rangle \langle 1,-b\rangle$, the involution fixes $k$ and maps $\ii\mapsto -\ii$, $\jj\mapsto -\jj$.
	    \item
		If $\dim_k C = 8$, then $C/k$ is an {\em octonion algebra} over $k$. There exist $a,b,c\in k$ such that $C=Q\oplus Q\kk$ where $Q=(a,b)_k$ and multiplication is given by \[(x_1+x_2\kk)(y_1+y_2\kk)=(x_1y_1+c\overline{y_2} x_2)+(y_2x_1+{x_2}\overline{y_1} )\kk \quad\text{for all }x_i,y_i\in Q.\]
The norm form is $\langle1, -a\rangle \langle 1,-b\rangle \langle 1,-c\rangle$ and the involution is given by $\overline{x_1+x_2\kk}=\overline{x_1}-x_2\kk$. for all $x_1,x_2\in Q$.
	\end{compactenum}

In each case, the norm form is a Pfister form, these are forms of dimension $2^n$ denoted by $\llangle a_1,\dots,a_n\rrangle:=\otimes_{i=1}^n\langle 1, a_i\rangle$ for $a_1,\dots,a_n\in k$.
	The norm form is anisotropic when $C$ is a division algebra,
	and it is hyperbolic otherwise (i.e.\@ when $C$ is a split algebra).	

The norm form is completely determined by the algebra structure of the composition algebra.
It is a well known but somewhat deeper fact (see e.g.\@ \cite{SV}) that the converse also holds, i.e.
the composition algebra is determined up to isomorphism by the (similarity class of) the norm.
	
Quaternion algebras are not commutative, but associative. 
Octonion algebras are neither commutative nor associative. In the lemma below we summarize some useful identities that hold in each composition algebra.

\begin{lemma}[{\cite[Lemma 1.3.2, 1.3.3 and 1.4.1]{SV}}]\label{identities octonion}

Let $C$ ben an arbitrary composition algebra with norm $q$, with associated bilinear form $f$, and involution denoted by $x\mapsto \overline{x}$. Then for all $x,y,z\in C$ we have
\begin{compactenum}[\rm (i)]
 \item$ x^2-f(x,e)x+q(x)e=0, 
 $\item$ f( xy,z )= f( y, \overline{x}z), \quad   f( xy,z )= f( z, y\overline{x}), \quad   f( xy,z )= f( y\overline{z}, \overline{x})$,  
\item\label{alt} Each subalgebra generated by two elements is associative,
  \item$ x(\overline{x}y)=q(x)y, \quad (x\overline{y})y=q(y)x, 
 $\item\label{moufang id}$ (zx)(yz)=z((xy)z),\quad z(x(zy))=(z(xz))y,\quad x(z(yz))=((xz)y)z$.
\end{compactenum}
\end{lemma}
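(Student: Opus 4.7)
The plan is to derive the five identities in cascade from the multiplicativity of $q$. Starting from $q(xy)=q(x)q(y)$, I would first polarize in both arguments to obtain the master bilinear identity
\[f(x_1y_1,x_2y_2)+f(x_1y_2,x_2y_1)=f(x_1,x_2)f(y_1,y_2).\]
Specializing $x_1=e$ and using the definition $\overline{x}=f(x,e)e-x$ immediately yields the first equality in (ii); the two remaining forms in (ii) follow by analogous specializations (putting $y_1=e$ or swapping the roles of the two pairs of variables), together with symmetry of $f$.

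With (ii) in hand, (i) comes from rewriting $f(x^2,z)=f(x,\overline{x}z)$, expanding $\overline{x}z=f(x,e)z-xz$, and using the particular case $f(x,xz)=q(x)f(e,z)$ of the master identity to reach $f(x^2-f(x,e)x+q(x)e,\,z)=0$ for all $z$; nondegeneracy of $f$ (which holds in characteristic not $2$ whenever $q$ is nondegenerate) then gives (i). As an immediate corollary, $x\overline{x}=\overline{x}x=q(x)e$. Identity (iv) then drops out by applying (ii) twice, $f(x(\overline{x}y),z)=f(\overline{x}y,\overline{x}z)=f(y,(x\overline{x})z)=q(x)f(y,z)$, again finishing by nondegeneracy, and the companion identity $(x\overline{y})y=q(y)x$ is symmetric.

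For (iii), I would combine (i) and (iv) to verify the left and right alternative laws $x(xy)=x^2y$ and $(xy)y=xy^2$; after this, E.~Artin's classical theorem asserts that in any alternative algebra the associator $[a,b,c]=(ab)c-a(bc)$ is alternating in all three arguments, which forces any subalgebra generated by two elements to be associative.

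The Moufang identities in (v) are the subtlest part and will be the main obstacle. The standard route is to leverage full alternativity together with the linearized forms of (iv): the middle Moufang identity $(zx)(yz)=z((xy)z)$ is derived first, by expressing both sides in terms of associators and collapsing them via alternativity and flexibility, after which the left and right Moufang identities follow by further linearization in one of the arguments. The difficulty here is not conceptual but the bookkeeping, as each identity reduces to a sequence of alternator manipulations which, while routine, is tedious enough that the clean write-up is left to the reference \cite{SV}.
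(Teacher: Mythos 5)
Your outline is correct and is exactly the standard argument from the cited reference \cite{SV} (polarize multiplicativity of $q$ to get the adjunction identities, deduce the Cayley--Hamilton relation and the Kirmse identities, then alternativity via Artin and the Moufang identities by linearization); the paper itself offers no proof, only the citation. One small fix: in your chain for (iv), the middle step is cleaner as $f(x(\overline{x}y),z)=f(\overline{x}y,\overline{x}z)=q(\overline{x})f(y,z)=q(x)f(y,z)$, using the linearized multiplicativity $f(uy,uz)=q(u)f(y,z)$ directly rather than reassociating $x(\overline{x}z)$.
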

Property \eqref{alt} is called the alternativity, the identities in \eqref{moufang id} are called the {\em Moufang identities}.

\subsection{Tensor products of composition algebras}
We now assume that $C_1$ and $C_2$ are two composition algebras over $k$ (possibly of different dimension),
with norm forms $q_1$ and $ q_2$ and involutions $\sigma_1$ and $\sigma_2$, respectively.
Consider
\[ C_1 \otimes_k C_2 , \]
equipped with the involution
\[ \si:=\sigma_1 \otimes \sigma_2 . \]
If $\cha(k)\neq 2,3$ the algebra $(C_1 \otimes_k C_2, \sigma_1\otimes \sigma_2)$ is a {\em structurable algebra}. This is a class of algebras that generalizes Jordan algebras and associative algebras with involution. We will not need the exact definition and refer the interested reader to \cite{struct alg}.

Let $S_i$ be the set of skew elements in $C_i$, i.e.
\[ S_i = \{ x \in C_i \mid \overline{x}:=x^{\si_i} = -x \}, \]
and similarly, let $S$ be the set of skew elements of $C_1 \otimes_k C_2$, i.e.
\[ \Ss = \{ x \in C_1 \otimes_k C_2 \mid \overline{x}:=x^\si = -x \} = (S_1 \otimes 1) \oplus (1 \otimes S_2); \]
observe that $\dim_k \Ss = \dim_k C_1 + \dim_k C_2 - 2$.

\begin{definition}
We will associate a quadratic form $q_A$ to $C_1 \otimes_k C_2$, called the {\em Albert form}, by setting
\[ q_A \colon \Ss \to k \colon (x \otimes 1) + (1 \otimes y) \mapsto q_1(x) - q_2(y) \]
for all $x \in S_1$ and $y \in S_2$. When we denote $q'_i=q_i|_{S_i}$ for the pure part of the Pfister form $q_i$, we have $q_A=q'_1\perp \langle-1\rangle q'_2$.
\end{definition}

This form is named after A.A.~Albert, who studied the case where $C_1$ and $C_2$ are both quaternion algebras,
i.e.\@ $C_1 \otimes_k C_2$ is a biquaternion algebra.

\begin{definition}\label{def:invsharp}
Let $s=s_1\otimes 1+1\otimes s_2\in S$, we define the map {\em sharp} by
\[(s_1\otimes 1+1\otimes s_2)^\natural=s_1\otimes 1-1\otimes s_2.\]
If $q_A(s)\neq 0$, the {\em inverse} of $s$ is defined by
\[s^{-1}:=-\frac{1}{q_A(s)}s^\natural.\]
\end{definition}

Tensor products of two composition algebras are far from associative or alternative, but the skew elements behave nicer than arbitrary elements:

\begin{lemma}\label{ident bioctonion}
For all $x\in C_1\otimes_kC_2$, $s_1,s_2,s\in \Ss$ we have that
\begin{compactenum}[(\rm i)]
\item $s_1(s_2s_1)=(s_1s_2)s_1$
\item $(s_1s_2s_1)x=s_1(s_2(s_1x))$
\item If $s$ is invertible then $s(s^{-1}x)=x$.
\end{compactenum}
\end{lemma}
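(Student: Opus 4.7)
The plan is to reduce the three identities to computations inside the two alternative composition algebras $C_1$ and $C_2$ separately. Every element of $\Ss$ decomposes uniquely as $s=a\otimes 1+1\otimes b$ with $a\in S_1$, $b\in S_2$; since skew elements of a composition algebra have zero trace, Lemma~\ref{identities octonion}(i) yields $a^2=-q_1(a)\in k$ and $b^2=-q_2(b)\in k$. By bilinearity in $x$, it will suffice to verify (ii) and (iii) on simple tensors $x=p\otimes q$ with $p\in C_1$, $q\in C_2$.

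For (i), I will write $s_i=a_i\otimes 1+1\otimes b_i$ and expand both $(s_1s_2)s_1$ and $s_1(s_2s_1)$. Most summands coincide on the two sides; the only discrepancy sits in the pure factors and equals $\bigl((a_1a_2)a_1-a_1(a_2a_1)\bigr)\otimes 1+1\otimes\bigl((b_1b_2)b_1-b_1(b_2b_1)\bigr)$, which vanishes by the flexible law, a consequence of alternativity (Lemma~\ref{identities octonion}(iii)) in each composition algebra $C_i$. For (iii), writing $s=a\otimes 1+1\otimes b$ gives $s^\natural=a\otimes 1-1\otimes b$, so expanding $s\cdot(s^\natural\cdot(p\otimes q))$ the cross terms $\pm(ap\otimes bq)$ cancel and the diagonal terms reduce, via $a(ap)=a^2p=-q_1(a)p$ and $b(bq)=b^2q=-q_2(b)q$, to $(-q_1(a)+q_2(b))(p\otimes q)=-q_A(s)(p\otimes q)$; the definition of $s^{-1}$ then yields $s(s^{-1}x)=x$.

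The substantive step is (ii). I will expand $(s_1s_2s_1)(p\otimes q)$ using the explicit formula for $s_1s_2s_1$ obtained in (i), and expand $s_1(s_2(s_1(p\otimes q)))$ by successive left-multiplication, producing eight summands on each side. The diagonal summands $((a_1a_2a_1)p)\otimes q$ and $p\otimes((b_1b_2b_1)q)$ agree by the second Moufang identity $z(x(zy))=(z(xz))y$ of Lemma~\ref{identities octonion}(v), applied inside $C_1$ and $C_2$ respectively. The mixed summands of the form $(\ast p)\otimes(b_1 q)$ and $(a_1 p)\otimes(\ast q)$ combine via the linearized left-alternative identity $a_1(a_2 p)+a_2(a_1 p)=(a_1a_2+a_2a_1)p$ together with its $C_2$-analogue, using that $a_1a_2+a_2a_1$ and $b_1b_2+b_2b_1$ lie in $k$ (since $a_i,b_i$ are skew). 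The remaining summands involve only the central scalars $a_i^2,b_i^2\in k$ and match directly. The main obstacle will be the bookkeeping in this step: matching sixteen terms across two different expansions via three distinct mechanisms---the Moufang identity, linearized alternativity, and central squares---rather than any genuinely new identity beyond Lemma~\ref{identities octonion}.
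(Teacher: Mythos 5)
Your proposal is correct and is precisely the verification the paper leaves to the reader: the paper's proof of this lemma is the single sentence that the identities ``can be easily checked using Lemma~\ref{identities octonion}'', and your term-by-term expansion on simple tensors, using flexibility for (i), the Moufang identity $z(x(zy))=(z(xz))y$ together with linearized alternativity and the central squares $a^2=-q_1(a)$, $b^2=-q_2(b)$ for (ii), and the cancellation of cross terms for (iii), is exactly that check carried out in full.
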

\begin{proof}
These identities can be easily checked using Lemma \ref{identities octonion}.
\end{proof}

\begin{remark}
In the case that  both $C_1$ and $C_2$ are quaternion algebras, $C_1\otimes_k C_2$ is associative and A.A.\@ Albert proved that $C_1\otimes_k C_2$ is a division algebra if and only if its Albert form is anisotropic (see \cite[Theorem  III.4.8]{L}.)

It is not obvious to generalize this result to arbitrary composition algebras. (Notice that, in the theory of structurable algebras, the concept of conjugate invertibility is used.) In \cite[Theorem 5.1]{A2} it is proven in the case that  $\cha(k)=0$ that the tensor product of two octonion algebras is a conjugate division algebra if and only if the corresponding Albert form is anisotropic.
To the best of our knowledge, it is an open problem whether this equivalence also holds for fields of characteristic $>3$.
\end{remark}

The case where $q_A$ has Witt index one will be needed to study the rank two forms of linear algebraic groups of type $E_6, E_7, E_8$ discussed in the introduction.

\begin{definition}[{\cite[Definition 5.11]{L}}]
Two $n$-fold Pfister forms $q_1$, $q_2$ are $r$-linked if there is an $r$-fold Pfister form $h$ such that $q_1\simeq h\otimes q_3$ and $q_2\simeq h\otimes q_4$ for some Pfister forms $q_3, q_4$.

The {\em linkage number} of $q_1$ and $q_2$ is the number $r\in \mathbb{N}$ such that $q_1$ and $q_2$ are $r$-linked but not $(r+1)$-linked.
\end{definition}

\begin{lemma}\label{lem:wittindex}
Let $C_1$ be an octonion division algebra with norm $q_1$ and let $C_2$ be a separable quadratic field extension, quaternion division algebra or an octonion division algebra, with norm $q_2$. The following are equivalent:
\begin{compactenum}[\rm (i)]
\item\label{item1}  $C_1$ and $C_2$ contain isomorphic separable quadratic field extensions, but $C_1$ and $C_2$ do not contain isomorphic quaternion algebras.
\item\label{item2} The linkage number of $q_1$ and $q_2$ is 1, i.e. $q_1$ and $q_2$ are 1-linked but not 2-linked.
\item\label{item3} The Witt index of the Albert form $q_A$ of $C_1\otimes_k C_2$ is equal to one.
\end{compactenum}
\end{lemma}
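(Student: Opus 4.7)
The strategy is to rephrase the three conditions in terms of the Pfister norm forms $q_1, q_2$ and to invoke two classical results from the theory of quadratic forms: the Pfister subform/factor principle, and the explicit formula for the Witt index of a difference of anisotropic Pfister forms in terms of their linkage number.

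For the equivalence of (i) and (ii), I would recall that a separable quadratic field extension $L = k[\sqrt{a}]$ embeds as a composition subalgebra of $C$ precisely when $\langle 1,-a\rangle$ is a subform of the (anisotropic) Pfister norm of $C$; similarly, a quaternion division algebra $(a,b)_k$ embeds into $C_1$ (respectively into $C_2$, when $\dim_k C_2 \geq 4$) if and only if the $2$-fold Pfister form $\llangle a, b\rrangle$ is a subform of the corresponding norm. Because anisotropic Pfister forms are round, any Pfister subform of an anisotropic Pfister form is automatically a Pfister factor (see e.g.\@ \cite[Theorem X.1.8]{L}). Hence the existence of a common quadratic subfield of $C_1, C_2$ is equivalent to $q_1$ and $q_2$ having a common $1$-fold Pfister factor, i.e.\@ to being $1$-linked, and likewise the existence of a common quaternion subalgebra is equivalent to $2$-linkage. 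Condition (i) then translates literally into ``$1$-linked but not $2$-linked'', which is (ii).

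For the equivalence of (ii) and (iii), I would first unfold the definition $q_A = q_1' \perp \langle -1\rangle q_2'$ together with $q_i = \langle 1\rangle \perp q_i'$ to obtain the isometry
\[ q_A \perp \langle 1,-1\rangle \;\simeq\; q_1 \perp \langle -1\rangle q_2 , \]
which shows that the Witt indices of $q_A$ and of $q_1 \perp \langle -1\rangle q_2$ differ by exactly one. I would then invoke the Elman--Lam theorem (see e.g.\@ \cite[Corollary X.4.10]{L}), which asserts that for two anisotropic Pfister forms the Witt index of their orthogonal difference equals $2^r$, where $r$ is their linkage number. Applied to $q_1$ and $q_2$ this yields $i_W(q_A) = 2^r - 1$, which equals $1$ precisely when $r = 1$, proving the equivalence of (ii) and (iii).

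The main obstacle is the Elman--Lam formula in the second step. The inequality $i_W(q_A) \geq 2^r - 1$ is elementary: write $q_i = h \otimes p_i$ for a common $r$-fold factor $h$ and observe that $h \otimes \langle 1, -1\rangle$ is a hyperbolic subform of $q_1 \perp \langle -1\rangle q_2$. The reverse inequality is the substantive point and amounts to applying the Arason--Pfister Hauptsatz to the anisotropic part of $p_1 \perp \langle -1\rangle p_2$ in order to extract an extra common Pfister factor whenever the Witt index exceeds $2^r$; this is essentially the content of the Elman--Lam statement cited above.
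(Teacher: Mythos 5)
Your proposal is correct and follows essentially the same route as the paper: the equivalence of (ii) and (iii) is obtained in both cases from the isometry $q_A\perp\Hh\simeq q_1\perp\langle -1\rangle q_2$ together with the Elman--Lam linkage theorem, and the equivalence of (i) and (ii) from the standard dictionary between composition subalgebras and Pfister factors of the norm (the paper cites this directly from Springer--Veldkamp, whereas you derive it from the subform characterization plus the fact that a Pfister subform of an anisotropic Pfister form is a factor). The only differences are cosmetic: your extra sketch of the Elman--Lam argument and a sign-convention mismatch in $\llangle a,b\rrangle$ relative to the paper's definition of Pfister forms.
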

\begin{proof} 
Since the Witt index of $q_A$ is one less than the Witt index of $q_1\perp -q_2$, the equivalence of \eqref{item2} and \eqref{item3} is given by a result of Elman--Lam (see for example \cite[Theorem X.5.13]{L}).

The following observations follow from \cite[Prop. 1.5.1]{SV}.
Let $C$ be a composition algebra over $k$ with norm $q$.
\begin{compactdesc}\setlength{\itemsep}{1ex}
\item[\normalfont Let $\dim(C)=4$ or $8$.] Then $C$ contains a separable extension field isomorphic to $k(\ii)/(\ii^2-a)$ with $a\in k$  if and only if there exists a Pfister form $\varphi$, of dimension $2$ or $4$ respectively, such that $q\simeq \llangle - a\rrangle\otimes \varphi$.
\item[\normalfont Let $\dim(C)=8$.] Then $C$ contains a quaternion algebra isomorphic to $(a,b)_k$ with $a,b\in k$ if and only if $q\simeq \llangle  -a,-b,-c\rrangle$ for some $c\in k$.
\end{compactdesc}
From this it follows immediately that \eqref{item1} and \eqref{item2} are equivalent. 
\end{proof}

\begin{remark}
Suppose that $C_1$ and $C_2$ contain isomorphic separable quadratic field extensions. Even if they do not contain isomorphic quaternion algebras, it is still possible that $C_1$ and $C_2$ contain more than one isomorphic separable quadratic field extension up to isomorphism.
\end{remark}

\begin{definition}
We define the {\em linkage number} of $C_1$ and $C_2$ as the linkage number of their norm forms $q_1$ and $q_2$.
\end{definition}

Lemma~\ref{lem:wittindex} indicates that we will be particularly interested in pairs of composition algebras $C_1$, $C_2$ with linkage number one.

\subsection{Quadrangular algebras}

A quadrangular algebra is an algebraic structure that was constructed to describe the exceptional Moufang quadrangles. In Section~\ref{se:quadrangularsystems} we explain how one constructs Moufang quadrangles out of quadrangular algebras. For more information on quadrangular algebras, including characteristic 2, we refer to \cite{W}. We emphasize that the structure of a quadrangular algebra simplifies significantly in \chacha\ different from 2, see Remark \ref{charnot2}. Since this is the only case we will be dealing with we restrict our definition to $\cha(k)\neq2$.

\begin{definition}\label{def:quad}
A {\em quadrangular algebra}, in \chacha\  different from 2,  is an $7$-tuple $(k,L,q,1,X,\cdot,h)$, where
\begin{compactenum}[(\rm i)]
    \item
	$k$ is a commutative field with $\Char(k) \neq 2$,
    \item
	$L$ is a $k$-vector space,
    \item
	$q$ is an anisotropic quadratic form from $L$ to $k$,
    \item
	$1 \in L$ is a {\em base point} for $q$, i.e.\@ an element such that $q(1) = 1$,
    \item
	$X$ is a non-trivial $k$-vector space,
    \item
	$(x,v) \mapsto x \cdot v$ is a map from $X \times L$ to $X$ (usually denoted simply by juxtaposition),
    \item
	$h$ is a map from $X \times X$ to $L$, 
\end{compactenum}
satisfying the following axioms, where
\begin{align*}
	&f \colon L \times L \to k \colon (x,y) \mapsto f(x,y) := q(x+y) - q(x) - q(y) \,; \\
	&\sigma \colon L \to L \colon v \mapsto f(1,v) 1 - v \,; \\
	&v^{-1} := v^\sigma / q(v) \,.
\end{align*}
\begin{compactitem}
    \item[(A1)]
	The map $\cdot$ is $k$-bilinear.
    \item[(A2)]
	$x \cdot 1 = x$ for all $x \in X$.
    \item[(A3)]
	$(xv)v^{-1} = x$ for all $x \in X$ and all $v \in L^*$.
    \medskip
    \item[(B1)]
	$h$ is $k$-bilinear.
    \item[(B2)]
	$h(x,yv)=h(y,xv)+f(h(x,y),1)v$ for all $x,y \in X$ and all $v \in L$.
    \item[(B3)]
	$f(h(xv,y),1) = f(h(x,y),v)$ for all $x,y \in X$ and all $v \in L$.

    \medskip
    \item[(C)]
	$\theta(x,v) := \half h(x, xv)$.
    \medskip
    \item[(D1)]
	Let $\pi(x) = \theta(x,1)$ for all $x \in X$. Then $x \theta(x, v) = (x\pi(x))v$
	for all $x \in X$ and all $v \in L$.
    \item[(D2)]
	For all $x\in X\minuszero$ we have $\pi(x)\neq 0$.

\end{compactitem}
Moreover, we define a map $g \colon X \times X \to k$ by
\[ g(x,y) := \half f(h(x,y), 1) \]
for all $x,y \in X$.
\end{definition}

\begin{remark}\label{charnot2}
When one compares our definition of quadrangular algebras with the general definition in \cite[Definition 1.17]{W} there are two differences which are due to the fact that the definition simplifies when the \chacha\ is different from 2.
\begin{compactenum}[(\rm i)]
\item  The axiom (C) in \cite{W} consists of 4 more involved axioms (see \cite[Remark 4.8]{W}). By assuming $\theta(x,v)=\half h(x,xv)$ we actually assume that the quadrangular algebra is standard. Every quadrangular algebra is equivalent to a standard quadrangular algebra (see \cite[Proposition 4.2]{W}.)
\item In \cite{W}, axiom (D2) says $\pi(x) \equiv 0 \pmod{k}$ if and only if $x = 0$ (where $k$ has been identified
	with its image under the map $t \mapsto t\cdot 1$ from $k$ to $L$). We show that this is equivalent to our axiom (D2).

Assume the above (D2) holds. Applying (B2) with $x=y$, $v=1$ we get $f(h(x,x),1)=0$. If we suppose $\pi(x)=\half h(x,x)\in k1$, we have $f(h(x,x),1)=2h(x,x)=0$ and it follows that $\pi(x)=0$, so $x=0$.
\end{compactenum} 

\end{remark}
\begin{theorem}\label{th:class quad alg} A quadrangular algebra in characteristic not 2 is either obtained from an anisotropic pseudo-quadratic space over a quadratic pair
(see Section~\textup{\ref{def:pqs}}) or is of type $E_6,E_7$ or $ E_8$ (see Section~\textup{\ref{se:e6e7e8}}).
\end{theorem}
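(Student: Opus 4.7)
The plan is to deduce the statement from the full classification of quadrangular algebras proved by R.~Weiss in \cite{W}. In that monograph the quadrangular algebras are partitioned into four families according to the structure of the underlying quadratic space $(L,q)$ and the module $X$ (see \cite[Chapter 1]{W}): the \emph{quadratic form type}, the \emph{involutory type}, the \emph{pseudo-quadratic form type}, and the \emph{exceptional types}, where the exceptional types are $E_6$, $E_7$, $E_8$ and $F_4$. My task reduces to checking that under the assumption $\cha(k)\neq 2$ together with the simplified axioms in Definition~\ref{def:quad}, only the pseudo-quadratic family and the exceptional families $E_6,E_7,E_8$ survive.

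First I would eliminate the \emph{quadratic} and \emph{involutory} types. In Weiss's setup these correspond to situations where the module $X$ carries no genuine Hermitian data: the map $h$ factors through the base point in a degenerate way. Under our axiom (D2) (combined with (B2) as in Remark~\ref{charnot2}(ii)), such degenerate behaviour forces $X=0$, which is excluded since $X$ is non-trivial. When the involutory case does carry non-trivial $X$ in $\cha(k)\neq 2$, the involutory structure may be reinterpreted as a pseudo-quadratic form over the trivial quadratic pair, so it falls into the pseudo-quadratic family. This is the translation between Definitions~1.17 and~3.5 of \cite{W}, and I would cite the corresponding equivalences there.

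Next I would rule out the exceptional type $F_4$. The standard description of a type $F_4$ quadrangular algebra requires a quadratic form with an anisotropic \emph{radical}, a phenomenon that exists only when $\cha(k)=2$ (see \cite[Chapter 14]{TW} and the remarks preceding the $F_4$ construction in \cite{W}). Our assumption $\cha(k)\neq 2$ rules this out, so the only exceptional families that remain are $E_6$, $E_7$ and $E_8$.

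The main obstacle is not a new mathematical argument but a careful dictionary translation: matching our streamlined 7-tuple $(k,L,q,1,X,\cdot,h)$ axioms against Weiss's more general axioms (which encompass characteristic 2) and verifying that the reductions in Remark~\ref{charnot2} do not exclude any case of the classification. Once that bookkeeping is done, the theorem follows directly from the main classification theorem of \cite{W}.
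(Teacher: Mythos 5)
Your overall strategy --- reduce the statement to the classification of quadrangular algebras in \cite{W} and observe that type $F_4$ is excluded because it only occurs in characteristic $2$ --- is the same as the paper's. However, there is a gap in how you set up the reduction. Weiss's structure theorem (the relevant statement is \cite[3.2]{W}) is a trichotomy for \emph{proper regular} quadrangular algebras: such an algebra is either special (i.e.\ obtained from an anisotropic pseudo-quadratic space over a quadratic pair) or of type $E_6$, $E_7$, $E_8$ or $F_4$. Before you may invoke it, you must verify these two hypotheses, and this verification is the actual content of the proof: in characteristic different from $2$ the bilinear form $f$ is automatically non-degenerate (regularity, from \cite[2.3 and 2.4]{W}) and $\sigma \neq 1$ (properness, from \cite[3.14]{W}). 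Your proposal never mentions regularity or properness, so as written the appeal to the classification is not justified.

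A second, related issue is that your ``four families'' taxonomy conflates the classification of \emph{Moufang quadrangles} with that of \emph{quadrangular algebras}. Quadrangular algebras are not partitioned into quadratic-form, involutory, pseudo-quadratic and exceptional types; the quadratic-form and involutory quadrangles correspond precisely to the improper and the $X$-trivial situations, which are excluded by the definition of a quadrangular algebra (non-triviality of $X$) and by properness. Consequently your paragraph ``eliminating'' the quadratic and involutory types via axiom (D2) is addressing cases that do not arise in Weiss's classification in the first place, while the step that does need to be checked (regularity and properness in characteristic $\neq 2$) is missing. Once those two facts are supplied, the argument is exactly the paper's: a non-special proper regular quadrangular algebra in characteristic $\neq 2$ is of type $E_6$, $E_7$ or $E_8$.
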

\begin{proof}
Since the \chacha\ of $k$ is not 2, it follows from \cite[2.3 and 2.4]{W} that the quadrangular algebra is regular, i.e. $f$ is non-degenerate (from \cite[3.14]{W} it follows that it is also proper, i.e. $\si\neq 1$). Now it follows from \cite[3.2]{W} that if the quadrangular algebra is not special (i.e. not arising from a pseudo-quadratic space) it is of type $E_6, E_7$ or $E_8$.
\end{proof}

\subsubsection{Pseudo-quadratic spaces}\label{def:pqs}

\begin{definition}[{\cite[Definition 1.16]{W}}]
A {\em pseudo-quadratic space} over a field of characteristic not 2 is a quintuple $(L,\si,X,h,\pi)$ where
\begin{compactenum}[(\rm i)]
    \item
	$L$ is a skew field of characteristic different from 2;
    \item
	$\si$ is an involution of $L$, and
	we let \[L_\si := \{\ell\in L\mid \ell^\si=\ell\}=\{\ell+\ell^\si\mid \ell \in L\};\]
    \item
	$X$ is a right vector space over $L$;
    \item
	$h \colon X\times X \rightarrow L$ is a {\em skew-hermitian form}, i.e.
	\begin{compactitem}
	\item $h$ is bi-additive and $h(x,yu)=h(x,y)u$,  and
	\item $h(x,y)^\si=-h(y,x)$,
	\end{compactitem}
	for all $x,y \in X$ and all $u\in L$;
    \item
	$\pi$ is a {\em pseudo-quadratic form} from $X$ to~$L$, i.e.
	\begin{compactitem}
	\item $\pi(x+y)\equiv \pi(x)+\pi(y)+h(x,y) \mod L_\si$, and
	\item $\pi(xu)\equiv u^\si\pi(x) u \mod L_\si$,
	\end{compactitem}
	for all $x,y \in X$ and all $u \in L$.
	Since we work in characteristic not 2 we can always assume that the pseudo-quadratic space is standard, i.e. $\pi(x)=\half h(x,x)$.
\end{compactenum}
A pseudo-quadratic space $(L,\si,X,h,\pi)$ is called {\em anisotropic} if
\[\pi(x)\equiv0 \mod L_\si \ \text{ only if }x=0.\]
\end{definition}

Not every pseudo-quadratic space is a quadrangular algebra; to be a pseudo-quadratic space the skew field has to satisfy some additional properties.

\begin{definition}[{\cite[Definition 1.12]{W}}]\label{def:quadratic pair}
Let $L$ be a skew-field with involution $\si$. We call $(L,\si)$ a {\em quadratic pair%
\footnote{This notion, taken from \cite[Definition 1.12]{W}, is quite different from the notion of a quadratic pair as defined in the Book of Involutions \cite{KMRT},
and has nothing to do with the notion of a quadratic pair in (finite) group theory either.}},
if $k:=L_\si$ is a field and if either
\begin{compactenum}[(\rm i)]
    \item
	$L/k$ is a separable quadratic field extension and $\si$ is the generator of the Galois group; or
    \item
	$L$ is a quaternion algebra over $k$ and $\si$ is the standard involution.
\end{compactenum}
Define $q(u)=uu^\si$; then $(k,L,q,1)$ is a pointed anisotropic non-degenerate quadratic space.
\end{definition}

 A result of Dieudonn\'e (see for example \cite[Theorem 1.15]{W}) says that if $\si$ is not trivial, the either $L$ is generated by $L_\si$ as a ring or $(L,\si)$ is a quadratic pair (in this case $L_\si$ is a field). From this point of view quadratic pairs are an exceptional class of skew-fields.

In \cite[Proposition 1.18]{W} it is shown that a non-zero standard anisotropic pseudo-quadratic space over a quadratic pair gives rise to the quadrangular algebra \[(k,L,q,1,X,\text{ scalar  multiplication},h).\]

\subsubsection{Quadrangular algebras of type $E_6,E_7$ and $E_8$}\label{se:e6e7e8}

For an explicit description of quadrangular algebras of type $E_6$, $E_7$ and $E_8$, we refer to \cite[Chapter 12 and 13]{TW};
for a concise description we refer to the first part of \cite[Chapter 10]{W}. 
Some care is needed, since the map $g$ in \cite{TW} is equal to $-g$ in \cite{W}.
Here we only give a concise overview of the structure of a quadrangular algebra of type $E_6$, $E_7$ or $E_8$.

\begin{definition}\label{def:e6e7e8}
A {\em quadratic space} $(k,L,q)$ with base point is of {\em type $E_6$, $E_7$ or $E_8$} if it is anisotropic and there exists a separable quadratic field extension $E/k$,
with norm denoted by $N$, such that:
\begin{compactitem}
\item[$E_6:$ ] there exist $s_2,s_3\in k^*$ such that \[(k,L,q)\cong (k,E^3,N\otimes\langle 1,s_2, s_3 \rangle);\]
\item[$E_7:$ ] there exist $s_2,s_3,s_4\in k^*$ such that $s_2s_3s_4\notin N(E)$ and \[(k,L,q)\cong (k,E^4,N\otimes \langle 1, s_2, s_3 , s_4\rangle);\]
\item[$E_8:$ ] there exist $s_2,s_3,s_4,s_5,s_6\in k^*$ such that $-s_2s_3s_4s_5s_6\in N(E)$ and \[(k,L,q)\cong (k,E^6,N\otimes \langle 1, s_2, s_3, s_4, s_5,s_6\rangle).\]
We always assume that $s_2s_3s_4s_5s_6=-1$, which can be achieved by rescaling the quadratic form if necessary.

\end{compactitem}
\end{definition}
As we are working in characteristic not $2$, we can choose $\ga\in E$ such that $E=k(\gamma)$ and $\gamma^2\in k$.
 
It is shown in \cite[(12.37)]{TW} that if
\[ (k,E^6,N\otimes \langle 1, s_2, s_3, s_4, s_5,s_6\rangle) \]
is a quadratic space of type $E_8$, then $(k,E^4,N\otimes \langle 1, s_2, s_3 , s_4\rangle)$ is a quadratic space of type $E_7$ and $(k,E^3,N\otimes\langle 1,s_2, s_3 \rangle)$ is a quadratic space of type $E_6$.

If $(k,L,q)$ is a quadratic space of type $E_6$, $E_7$ or $E_8$  with base point, there exists a scalar multiplication $E\times L\rightarrow L$ that extends the scalar multiplication $k\times L\rightarrow L$.

Let $(k,L,q,1,X,\cdot,h)$ be a quadrangular algebra of type $E_6$, $E_7$ or $E_8$, then $(k,L,q)$ is a quadratic space of type $E_6$, $E_7$ or $E_8$, respectively with basepoint denoted by $1$. This quadratic space determines the quadrangular algebra entirely (see \cite[Theorem 6.24]{W}).
 
The vector space $X$ has $k$-dimension $8$, $16$ or $32$, respectively; it is a $C(q,1)$-module (see Definition \ref{def:irmodule} below).
Some of the properties of the maps $\cdot, h, \theta$ and $\pi$ are given in Definition~\ref{def:quad}.
The existence of the vector space $X$ and of the maps $\cdot$, $h$ and $\theta$ is shown in \cite[Chapter 13]{TW}
by giving an explicit ad-hoc construction using the coordinatization of $L$. 

The goal of this article is to provide an alternative description of $X, L$ and the maps $\cdot, h$ starting from the tensor product of composition algebras.

In order to prove the anisotropy of our new construction of the map $\pi$ (see Theorem \ref{th:E6E7E8}), we need the concept of an irreducible $C(q,1)$-module.
\begin{definition}\label{def:irmodule}
\begin{compactenum}[(\rm i)]
\item Let $(k,V,q)$ be a quadratic space with basepoint 1, then the {\em Clifford algebra of $q$ with basepoint $1$} is
	defined as
	\[ C(q, 1) := T(V) / \langle u \otimes u^\sigma - q(u) \cdot 1 \rangle , \]
	where $T(V)$ is the tensor algebra of $V$, and where $\sigma$ is defined as in Definition~\ref{def:quad}.
	It is shown in \cite[12.51]{TW} that $C(q, 1) \cong C_0(q)$, the even Clifford algebra of $q$. The notion of a Clifford algebra with base point was introduced by Jacobson and McCrimmon; see \cite[Chapter 12]{TW} for more details.
	
\item\label{cq1module} Since $q$ is anisotropic, the axioms (A1)--(A3) of an arbitrary quadrangular algebra say precisely that $X$ is a $C(q, 1)$-module,  such that the action of $C(q,1)$ on $X$ is an extension of the action of $L$ on $X$ (see \cite[Proposition 2.22]{W}).
\item A $C(q, 1)$-module $X$ is {\em irreducible} if  we have $x\cdot C(q,1)=X$ for all $x\in X\minuszero$ .
\end{compactenum}

\end{definition}

\begin{theorem}[{\cite[2.26]{W}}]\label{th:irmodule}
Let $(k,V,q)$ be a quadratic space of type $E_6$, $E_7$ or $E_8$ with basepoint $1$ and let $X$ be a right $C(q,1)$-module. Then $X$ is an irreducible right $C(q,1)$-module if and only if $\dim_k(X)=8$, $16$ or $32$, respectively.
\end{theorem}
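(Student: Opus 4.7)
The plan is to translate the theorem, via the isomorphism $C(q,1) \cong C_0(q)$ recalled in Definition~\ref{def:irmodule}(i), into a question about the Wedderburn structure of the even Clifford algebra of $q$, and then to read off the $k$-dimensions of its irreducible modules case by case. The general framework is: for a semisimple $k$-algebra $A$ with Wedderburn decomposition $A \cong \prod_i M_{n_i}(D_i)$, where $D_i$ is a division algebra with centre $Z_i$, the $k$-dimensions of the irreducible right $A$-modules are the numbers $n_i \cdot \dim_k D_i$, while $\dim_k A = \sum_i n_i^2 \dim_k D_i$.

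I would first compute $\dim_k C_0(q) = 2^{\dim_k V - 1}$, which equals $32$, $128$ and $2048$ in the cases $E_6$, $E_7$ and $E_8$ respectively. Next I would determine the centre $Z = Z(C_0(q))$, a quadratic \'etale $k$-algebra whose isomorphism class is governed by the signed discriminant of $q$. Using the explicit shape $q \cong N \otimes \langle 1, s_2, \dots\rangle$ from Definition~\ref{def:e6e7e8}, a direct discriminant computation gives: in the $E_6$ case $Z \cong E$, so $C_0(q)$ is central simple over $E$ of degree $4$; in the $E_7$ and $E_8$ cases the signed discriminant is trivial, so $Z \cong k \times k$ and $C_0(q) \cong A_+ \times A_-$ with each $A_\pm$ central simple over $k$ of $k$-dimension $64$ and $1024$ respectively.

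The main step, and the main technical obstacle, is to identify each simple factor up to Brauer equivalence using the arithmetic conditions on the $s_i$. For $E_6$, one checks by extending scalars to $E$ (over which $N$, and hence $q$, is hyperbolic) that $C_0(q) \cong M_4(E)$, so its unique irreducible module is $E^4$, of $k$-dimension $8$. For $E_7$, the hypothesis $s_2 s_3 s_4 \notin \N(E)$ is exactly what forces both $A_\pm$ to be Brauer-equivalent to the quaternion algebra $Q$ attached to the $A_1$-factor of the Tits index, so $A_\pm \cong M_4(Q)$ and their irreducible modules have $k$-dimension $16$. For $E_8$, the normalisation $s_2 \cdots s_6 = -1$ together with $-s_2 \cdots s_6 \in \N(E)$ forces both Brauer classes to vanish, so $A_\pm \cong M_{32}(k)$ and the irreducibles have $k$-dimension $32$.

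Once this structural picture is in place, the theorem follows easily in both directions: every irreducible right $C(q,1)$-module has $k$-dimension exactly $8$, $16$ or $32$ in the respective cases, so any $C(q,1)$-module $X$ of that $k$-dimension must coincide with any irreducible submodule, forcing $X$ itself to be irreducible; and conversely an irreducible $X$ has precisely that dimension. The delicate point is really the Brauer-class identification, in which the arithmetic conditions on the $s_i$ in Definition~\ref{def:e6e7e8} are used in an essential way: relaxing any of them would allow the simple factors of $C_0(q)$ to involve genuine division algebras of higher degree, and the dimensions of the irreducible modules would be correspondingly larger.
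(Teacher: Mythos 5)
The paper offers no proof of Theorem~\ref{th:irmodule}: it is imported verbatim from \cite[2.26]{W}, so there is no internal argument to compare yours against. That said, your proof is correct and is essentially the standard argument underlying the cited result: identify $C(q,1)$ with $C_0(q)$, compute its centre via the signed discriminant, pin down the Brauer classes of the simple factors, and read off the $k$-dimensions of the irreducible modules from the Wedderburn decomposition. All of your intermediate claims check out: the signed discriminant of $q$ is $a$ in the $E_6$ case and trivial in the $E_7$ and $E_8$ cases, and the Brauer-class identification --- which you rightly flag as the only real work --- is obtained most painlessly from the paper's own Lemma~\ref{lem:e6e7e8}, which gives $q \perp m\mathbb{H} \simeq q_1 \perp -q_2$ with $q_1$ a $3$-fold Pfister form; hence $q \equiv -q_2 \pmod{I^3}$, so the simple factor(s) of $C_0(q)$ are split by $E$ in the $E_6$ case, Brauer-equivalent to the quaternion algebra $(a,s_2s_3s_4)_k$ in the $E_7$ case (division precisely when $s_2s_3s_4 \notin \N(E)$), and split in the $E_8$ case (where $q_2$ is itself a $3$-fold Pfister form, so $q \in I^3$). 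Two trivial caveats: the irreducibility criterion of Definition~\ref{def:irmodule}(iii) is vacuously satisfied by the zero module, so $X=0$ must be tacitly excluded; and in the $E_7$, $E_8$ cases there are two isomorphism classes of irreducible modules (one per factor of $C_0(q)$), but since they have equal $k$-dimension your semisimplicity argument in both directions goes through unchanged.
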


\subsection{ Peirce decomposition in Jordan algebras}

A good reference to study the theory of Jordan algebras is \cite{Mc}. Our construction of exceptional quadrangular algebras uses the Peirce decomposition of a Jordan algebra. We summarize the main properties and multiplication rules of Peirce subspaces.

\begin{definition}
A {\em Jordan $k$-algebra} $J$ is a unital commutative $k$-algebra such that for all $x,y\in J$ we have $(x^2y)x=x^2(yx)$.

We define the {\em $U$-operator} and its linearization for $x,y,z\in J$
\[U_xy:=2x(xy)-x^2 y, \quad U_{x,z}y:=(U_{x+z}-U_x-U_z)y.\]

An element $x\in J$ is {\em invertible} if and only if there exists a $y\in J$ such that $xy=1$ and $x^2y=x$;  this condition is equivalent with $U_xy=x, U_x y^2=1$. The element $y$ is the {\em inverse} of $x$.
\end{definition}
\begin{definition}[{\cite[II.8.1 and II.8.2 on p.235]{Mc}}]
Let $J$ be a Jordan $k$-algebra.
\begin{compactenum}[\rm (i)]
\item An element $e\in J$ is an {\em idempotent} if $e^2=e$. An idempotent is {\em proper} if it is different from 0 and 1.
If $e$ is an idempotent, then $1-e$ is also an idempotent. Two idempotents $e,e'$ are {\em supplementary} if $e+e'=1$. Observe that two supplementary idempotents are always orthogonal, i.e. $ee'=0$.

\item Let $e$ be a proper idempotent in $J$. The {\em Peirce decomposition} with respect to $e$ is defined as follows.
For each $i \in \Z\!\left\lbrack\half \right\rbrack$, let
\[J_i = \{x\in J \mid ex=ix\} ;\]
then we have
\[ J=J_0\oplus J_\subhalf \oplus J_1 ; \]
in particular, $J_\ell = 0$ if $\ell \not\in \{ 0, \half , 1 \}$. For a nondegenerate Jordan algebra we have $J_0\neq 0$ (see \cite[II.10.1.2]{Mc}).
Let $i\in \{0,1\}$ and $j=1-i$; then
\[J_i^2\subseteq J_i, \quad J_iJ_\subhalf\subseteq J_\subhalf, \quad J_\subhalf^2\subseteq J_0+J_1, \quad J_iJ_j=0 . \]
For all $\ell ,m\in \{0,\half ,1\}$, we have
\begin{align}\label{Upeirce}
U_{J_m} J_\ell \subseteq J_{2m-\ell}
\end{align}
This implies that $U_{J_m}J_\ell=0$ if $2m-\ell\notin\{0,\half,1\}$.
\end{compactenum}
\end{definition}

To construct quadrangular algebras we will use two types of Jordan algebras that contain supplementary idempotents. These are the Jordan algebras of reduced spin type and the Jordan algebras $\mathcal{H}(M_2(L),\si t)$ for a skew field $L$ with involution $\si$, where $t$ is the transpose map.

\begin{definition}[{\cite[II.3.4 on p.\@ 180]{Mc}}]\label{def: redspin}
Consider a quadratic form $q \colon V \to k$ over $k$. Starting from the vector space $V$ we construct a Jordan algebra by adjoining two supplementary idempotents to $V$.

As a vector space, we define $J$ by adjoining two copies of $k$ to $V$:  $J:=ke_0\oplus V\oplus ke_1$. We define the following multiplication:
\begin{align}
	(t_1e_i)(t_2 e_j) &= \delta_{ij}t_1t_2 e_i, \label{multeiej}\\
	(t e_i)v &= \half t v, \label{multeiv}\\
	v w &= \half f(v,w)(e_0+e_1),\label{multvw}
\end{align}
for all $i,j \in \{0,1\}, v,w \in V, t,t_1,t_2\in k$. This defines a Jordan algebra\footnote{Notice that this is the same Jordan algebra as the Jordan algebra of the quadratic form $Q:ke_0\oplus V\oplus ke_1\rightarrow k:t_0e_0+v+t_1e_1\mapsto t_0t_1-q(v)$.} on  $ke_0\oplus V\oplus ke_1$, called the {\em reduced spin factor of the quadratic form} $q$. We say that $J$ is of {\em reduced spin type}.

The unit of this Jordan algebra is $e_0+e_1$, and for all $v,w \in V$ we have
\[U_{v}e_0=q(v)e_1, \quad U_{v}e_1=q(v)e_0, \quad U_vw=f(v,w)v-q(v)w.\]

It is clear that $e_0$ and $e_1$ are supplementary idempotents and that we have the following Peirce subspaces with respect to $e_1$: 
\[J_0=ke_0, \quad J_\subhalf=V, \quad J_1=ke_1.\]
\end{definition}

\begin{definition}[{\cite[Example  II.3.2.4]{Mc}}]\label{def: matrix}
Let $L$ be a skew field with involution $\si$, define $L_\si:=\Fix_\si(L)$ and $k:=Z(L)$. The matrix algebra $M_2(L)$ is associative with involution $\si t$. Now let $J$ be the Jordan $k$-algebra\footnote{This Jordan algebra consists of the fixed points in $M_2(L)$ of the involution $\si t$; multiplication is given by $m.n=\half(mn+nm)$ (where on the right hand side the usual matrix multiplication is used).} \[\mathcal{H}(M_2(L),\si t)=\left\{\begin{bmatrix}\alpha_1&\ell^\si\\ \ell &\al_2\end{bmatrix}\mid \alpha_1,\al_2 \in L_\si,\ell\in L\right\}.\]

We define the supplementary idempotents $e_0=\left[\begin{smallmatrix}1&0\\0&0\end{smallmatrix}\right], e_1=\left[\begin{smallmatrix}0&0\\0&1\end{smallmatrix}\right]\in J$.
With respect to $e_1$, we have \[J_0=L_\si e_0,\quad J_1=L_\si e_1\quad \text{and}\quad J_\subhalf=\left\{\begin{bmatrix}0&\ell^\si\\ \ell &0\end{bmatrix}\mid \ell\in L\right\}.\] We have
\begin{align*}
	(\al_1e_i)(\al_2 e_j) &= \delta_{ij}\half(\al_1\al_2+\al_2\al_1) e_i, \\
	(\al e_0)v &= \half \begin{bmatrix}0&\al\ell^\si\\ \ell\al &0\end{bmatrix}, \\
	(\al e_1)v &= \half \begin{bmatrix}0&\ell^\si \al\\ \al\ell &0\end{bmatrix}, \\
	v_1 v_2&=\half(\ell_1^\si \ell_2+\ell_2^\si \ell_1) e_0+\half(\ell_1\ell_2^\si+\ell_2 \ell_1^\si)e_1.
\end{align*}
for all $i,j \in \{0,1\}, v=\begin{bmatrix}0&\ell^\si\\ \ell&0\end{bmatrix}, v_1=\begin{bmatrix}0&\ell_1^\si\\ \ell_1 &0\end{bmatrix}, v_2=\begin{bmatrix}0&\ell_2^\si \\ \ell_2 &0\end{bmatrix}\in J_\subhalf$, $\al, \alpha_1,\alpha_2\in L_\si $. 
For the $U$-operators we find
\[U_{v}(\al e_0)=(\ell\al \ell^\si  )e_1, \quad U_{v}(\al e_1)=(\ell^\si \al\ell) e_0, \quad U_{v_1}v_2=\begin{bmatrix}0&\ell_1^\si \ell_2\ell_1^\si \\ \ell_1\ell_2^\si \ell_1 &0\end{bmatrix}.\]
\end{definition}

\begin{remark}\label{mat quadr pair}
If we consider the above definition in the case that $(L,\si)$ is a quadratic pair (see Definition \ref{def:quadratic pair}), then $k=L_\si$. Now there exists a non-degenerate anisotropic quadratic form $q:L\rightarrow k:\ell\mapsto \ell\ell^\si=\ell^\si\ell$ and the Peirce subspaces $J_0,J_1$ of $\mathcal{H}(M_2(L),\si t)$ are one-dimensional.

Define a quadratic form $Q$ on $J_\subhalf\subseteq \mathcal{H}(M_2(L),\si t)$ given by $Q(\begin{bmatrix}0&\ell^\si \\ \ell &0\end{bmatrix})=q(\ell)$. By comparing the multiplication in Definition \ref{def: matrix} above and the one in Definition \ref{def: redspin} we conclude that $\mathcal{H}(M_2(L),\si t)$ is the reduced spin factor of the quadratic space $(J_\subhalf,k,Q)$.
\end{remark}

In the following Proposition we use Osborn's Capacity Two theorem to show that the two families of Jordan algebras we discussed above can be characterized in a unified way. The proof of this Proposition uses some results and concepts of Jordan theory that we will not use in the remaining part of this article.
\begin{proposition}\label{char red spin}
Let $J$ be a non-degenerate Jordan $k$-algebra with supplementary proper idempotents $e_0$ and $e_1$. Let $J_0,J_\subhalf,J_1$ be the Peirce subspaces of $J$ with respect to $e_1$. We assume that each element in $J_\subhalf\setminus\{0\}$ is invertible and that there exists $u\in J_\subhalf$ such that $u^2=1$.
\begin{compactitem}
\item If $\dim(J_0)=1$, then $J$ is the reduced spin factor of some non-degenerate anisotropic quadratic space with basepoint $u$.\footnote{Notice that $\mathcal{H}(M_2(L),\si t)$ for $(L,\si)$ a quadratic pair is included in this case.}
\item If $\dim(J_0)>1$, then $J$ is isomorphic to $\mathcal{H}(M_2(L),\si t)$ for some skew field $L$ with involution $\si$, such that $(L,\si)$ is not a quadratic pair.
\end{compactitem}
\end{proposition}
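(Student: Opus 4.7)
The plan is to apply Osborn's Capacity Two theorem (cf.\ \cite[II.11.2]{Mc}). The supplementary proper idempotents $e_0,e_1$ give $J$ capacity $2$; the element $u\in J_\subhalf$ with $u^2=1$ connects them; combined with non-degeneracy this places $J$ in the scope of Osborn's theorem, which classifies $J$ as either a reduced spin factor of some quadratic space or as $\mathcal{H}(M_2(L),\sigma t)$ for an associative $k$-algebra $L$ with involution $\sigma$. The standard fact that an off-diagonal element $\bigl[\begin{smallmatrix}0 & \ell^\sigma\\ \ell & 0\end{smallmatrix}\bigr]\in \mathcal{H}(M_2(L),\sigma t)$ is Jordan-invertible iff $\ell$ is invertible in $L$, together with the invertibility hypothesis on $J_\subhalf\setminus\{0\}$, then forces $L$ to be a skew field.

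Next I would prove $\dim_k J_0=\dim_k J_1$: since $u^2=1$, $u$ is Jordan-invertible with $u^{-1}=u$, so $U_u\colon J_0\to J_1$ is a $k$-linear isomorphism with inverse $U_{u^{-1}}=U_u$ (using $U_u\,U_{u^{-1}}=\mathrm{id}$). In the case $\dim_k J_0=1$, this gives $J_0=ke_0$ and $J_1=ke_1$, so by the Peirce rules $v^2\in ke_0\oplus ke_1$ for every $v\in J_\subhalf$. Writing $v^2=\alpha e_0+\beta e_1$ and applying the Jordan identity $(v^2 e_i)v=v^2(e_i v)$ for $i=0$ and $i=1$ yields $v^3=\alpha v=\beta v$, so $\alpha=\beta=:q(v)$ and hence $v^2=q(v)\cdot 1$. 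Polarizing recovers the multiplication rules \eqref{multeiej}--\eqref{multvw} of a reduced spin factor with basepoint $u$; the form $q$ is anisotropic because $q(v)=0$ would force $v^2=0$ and hence $v=0$ by the invertibility hypothesis, and $q(u)=1$ holds by construction. Non-degeneracy of the associated bilinear form is inherited from that of $J$.

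In the case $\dim_k J_0>1$, Osborn gives $J\cong\mathcal{H}(M_2(L),\sigma t)$ for a skew field $L$ with involution $\sigma$. If $(L,\sigma)$ were a quadratic pair, then by Remark~\ref{mat quadr pair} $J$ would be a reduced spin factor with $J_0=ke_0$ of $k$-dimension $1$, contradicting $\dim_k J_0>1$; hence $(L,\sigma)$ is not a quadratic pair, as required.

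The main technical obstacle is the clean invocation of Osborn's theorem: one must verify that our hypotheses match its precise formulation and, in particular, rule out that at capacity two the coordinate algebra could be a non-associative alternative division algebra. The standard resolution is that $\mathcal{H}_2$ over an octonion division algebra already coincides with a reduced spin factor and so is absorbed into the first case; verifying this absorption and citing Osborn's theorem in the exact form we use are the delicate points to settle.
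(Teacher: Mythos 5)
Your overall route (Osborn's Capacity Two theorem) is the same as the paper's, and your case analysis at the end matches; but there is a genuine gap at the very first step. You assert that ``the supplementary proper idempotents $e_0,e_1$ give $J$ capacity $2$.'' That is not what capacity means: capacity $2$ requires that $e_0$ and $e_1$ be \emph{division} idempotents, i.e.\ that the Peirce subspaces $J_0$ and $J_1$ be Jordan division algebras, and this does not follow from the mere existence of two supplementary proper idempotents. Establishing it is the bulk of the paper's proof and is exactly where the hypothesis that every element of $J_\subhalf\setminus\{0\}$ is invertible enters: one reduces to $J_0$ via the isomorphism $U_u\colon J_1\to J_0$ (a step you do carry out), and then for $t\in J_0\setminus\{0\}$ one shows $U_t$ is surjective on $J_0$ using the identity $U_t s=U_uU_{2ut}s$ together with the fact that $2ut\in J_\subhalf\setminus\{0\}$ is invertible. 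Without some such argument your appeal to Osborn's theorem is unjustified, and your second bullet (the case $\dim J_0>1$) rests entirely on it. You also need simplicity of $J$ (Osborn's theorem is for \emph{simple} nondegenerate algebras of capacity two); you gesture at this via $u$ ``connecting'' the idempotents, which is indeed how the paper argues, but it should be said that nondegenerate $+$ connected capacity implies simple.

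Two smaller points. First, your worry about an octonion coordinate algebra is resolved by the form of Osborn's theorem the paper cites: the three output classes are full type $M_2(L)^+$, hermitian type over an associative skew field, and reduced spin factors, with degree-two hermitian octonion algebras already absorbed into the spin case; your exclusion of the full/non-division coordinate case via non-invertible off-diagonal elements is the same as the paper's. Second, your direct derivation of $v^2=q(v)\cdot 1$ from the Jordan identity in the case $\dim J_0=1$ is correct and is actually a nice self-contained alternative for that branch (it does not need Osborn at all), together with the anisotropy argument from invertibility; but it does not rescue the $\dim J_0>1$ branch, so the capacity verification still has to be supplied.
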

\begin{proof}
We will show that the assumptions imply that $J$ is a simple nondegenerate Jordan algebra of capacity 2; an algebra has capacity 2 if the unit is the sum of two supplementary idempotents $e_0,e_1$ such that the Peirce subspaces $J_0, J_1$ are division algebras. 

 Since $u^2=1$ it follows from \cite[II.6.1.10]{Mc} that $U_u$ is a Jordan isomorphism\footnote{This means that $U_u(xy)=U_u(x)U_u(y)$ for all $x,y\in J$.} of $J$ such that $(U_u)^2$ is the identity map. Since $U_u(J_1)\subseteq J_0$, $U_u$ is an isomorphism between $J_0$ and $J_1$. Therefore it is enough to show that $J_0$ is a division algebra. It follows from \cite[II.6.1.2]{Mc} that it is sufficient to show that for each element $t\in J_0\minuszero$ the operator $U_t$ is surjective on $J_0$.

Let $t,s\in J_0\minuszero$; using \cite[II.8.4.1]{Mc} one can verify that $U_{t}s=U_uU_{2ut}s$. We have $2ut\in J_\subhalf\minuszero$: if $2ut=0$ it would follow that $U_u(t)=-u^2t=-t$ which implies that $t\in J_0\cap J_1=\{0\}$. It follows that $U_{2ut}$ is invertible. Now let $r\in J_0$; since $U_{2ut}^{-1}U_u r\in J_0$ we have\[r=U_{t}(U_{2ut}^{-1}U_u r)\] and hence $U_{t}$ is surjective on $J_0$.

This proves that $J$ has capacity 2.
 
We now show that $J$ is simple. From \cite[II.20.2.4]{Mc} a nondegenerate algebra with capacity is simple iff its capacity is connected (i.e. if $e_0, e_1$ are connected \cite[II.10.1.3]{Mc}). In fact $e_0,e_1$ are even strongly connected since $u\in J_\subhalf$ is an involution, i.e. $u^2=1$. 

We proved all the conditions of Osborn's Capacity Two theorem, see \cite[II.22.2.1 on p. 351]{Mc}. This theorem states that a simple nondegenerate Jordan algebra of capacity 2 belongs to exactly one of the following three disjoint classes from which we can exclude the first:
\begin{compactenum}[(\rm i)]
\item Full type $M_2(L)^+$, for a noncommutative skew field $L$. The only idempotents are $e_0=\left[\begin{smallmatrix}1&0\\0&0\end{smallmatrix}\right]$, $e_1=\left[\begin{smallmatrix}0&0\\0&1\end{smallmatrix}\right]$. It is clear that the element $\begin{bmatrix}0&0\\\ell&0\end{bmatrix}\in J_\subhalf$ is not invertible.
\item Hermitian type $\mathcal{H}(M_2(L),\si t)$  with $L$ a skew-field with involution $\si$ such that $(L,\si)$ is not a quadratic pair. In this case $\dim(J_0)> 1$.
\item Reduced spin factor of a non-degenerate quadratic space $(k,V,q)$. Since the unit of $J$ is $e_0+e_1$ and $1=u^2=q(u)(e_0+e_1)=q(u)1$, $u$ is a base point of $q$. 

Suppose there exists a $v\in J_\subhalf\minuszero$ such that $q(v)=0$; then it would follow that $vw=0$ for all $w\in J_\subhalf$, which implies that $v$ is not invertible. Therefore $q$ is anisotropic. In this case $\dim(J_0)=1$.\qedhere
\end{compactenum}
\end{proof}

\section{A coordinate-free construction of quadrangular algebras}\label{sec3}

In this section we give a coordinate-free construction of quadrangular algebras.  Our construction was inspired by several properties of $J$-ternary algebras; see Definition \ref{def:Jtern} and \cite[3.12]{ABG}.

The entire article \cite{ABG} deals with fields of characteristic zero only. However the concept of a J-ternary algebra and its basic properties, such as Peirce decomposition (see Lemma \ref{peirce Jtern} and \cite[6.61]{ABG}) can be generalized without any adjustments to fields of characteristic different from 2 and 3.

It is not clear at all how to generalize the theory of $J$-ternary algebras to fields of characteristic 2 and 3; one reason is that the definition of a $J$-ternary algebra uses bilinear and trilinear forms. 

However in the theory of quadrangular algebras there is no difference between fields of characteristic different form $2$ and $3$ and fields of characteristic $3$. Therefore we want our construction of quadrangular algebras to work in \chacha\ 3 in the same way as in \chacha\ not 2 and 3. 

Actually, $J$-ternary algebras contain some axioms that are superfluous for our construction. In Theorem \ref{constr quad alg} we show that we can prove the axioms for quadrangular algebras using only a few axioms concerning a module for a Jordan algebra that has a skew-symmetric form. We replaced all the identities in the definition of a $J$-ternary system involving the trilinear form by an identity that only uses the bilinear form.  Therefore we do not need the assumption that the \chacha\ is different from 3. In Section~\ref{sec:Jtern} we show that a $J$-ternary algebra still satisfies the identity we demand in Theorem \ref{constr quad alg}.

 In the following subsection we do not start by giving a definition of $J$-ternary algebras. Instead we start by considering the concepts that we will need to formulate Theorem \ref{constr quad alg}.

To include \chacha\  2 would be another cup of tea for several reasons: already the definition of quadrangular algebras is much more complicated and the definition of a Jordan algebra is more delicate.

\subsection{Quadrangular algebras from special Jordan modules}\label{sec31}
Let $k$ be a field of \chacha\ different from 2. In the next lemma we introduce a module for Jordan algebras.
\begin{lemma}\label{lem: module}
Let $J$ be a Jordan $k$-algebra and let $X$ be a $k$-vector space. Suppose that $J$ acts on $X$ by $\bt:J\times X\rightarrow X$ such that 
\begin{compactenum}[\rm (i)]
\item $(tj)\bt x=j\bt (tx)$,
\item $(j+j')\bt x=j\bt x+ j'\bt x$,
\item $j\bt(x+y)=j\bt x+ j\bt y$,
\item $1\bt x=x$,
\end{compactenum}
for all $j,j'\in J,x,y\in X, t\in k$. Then the following are equivalent 
\begin{compactitem}
\item[\rm (v)]
$U_j j'\bt x=j\bt(j'\bt (j\bt x))$, 
\item[\rm (v')] $(jj')\bt x=\half (j\bt(j'\bt x)+j'\bt(j\bt x))$.
\end{compactitem}
\end{lemma}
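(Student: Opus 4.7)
The plan is to prove the equivalence by direct computation, using the explicit formula $U_j j' = 2j(jj') - j^2 j'$ from the definition of the $U$-operator, together with the linearization $U_{j,j''} = U_{j+j''} - U_j - U_{j''}$. The characteristic $\neq 2$ hypothesis will be used to divide by $2$.

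For the direction (v')$\Rightarrow$(v), I would plug (v') into $U_j j' \bt x = 2\,j(jj') \bt x - j^2 j' \bt x$. Each product in $J$ gets unfolded by (v') into a symmetrized action on $X$; in particular, the special case $j=j'$ of (v') gives $j^2 \bt y = j \bt (j \bt y)$, so $j^2 j' \bt x = \tfrac{1}{2}(j\bt(j\bt(j'\bt x)) + j'\bt(j\bt(j\bt x)))$. Expanding $2 j(jj')\bt x$ similarly yields four terms, and after subtracting $j^2 j' \bt x$ the symmetric pieces cancel in pairs, leaving exactly $j\bt(j'\bt(j\bt x))$.

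For the direction (v)$\Rightarrow$(v'), I would linearize (v) in its first argument: replacing $j$ by $j+j''$ and subtracting the two $U_j j' \bt x$ and $U_{j''} j' \bt x$ instances gives
\[ U_{j,j''} j' \bt x = j\bt(j'\bt(j''\bt x)) + j''\bt(j'\bt(j\bt x)). \]
On the Jordan side, a direct expansion shows
\[ U_{j,j''} j' = 2 j(j''j') + 2 j''(jj') - 2(jj'')j'. \]
Now specialize $j''=1$: using $1 \cdot a = a$ this collapses to $U_{j,1} j' = 2 jj'$, while the right-hand side of the linearized identity becomes $j\bt(j'\bt x) + j'\bt(j\bt x)$. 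Dividing by $2$ (which requires $\operatorname{char}(k)\neq 2$) gives (v').

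I do not expect a genuine obstacle here; the only thing to be careful about is bookkeeping in the expansion for (v')$\Rightarrow$(v) and the correct derivation of the formula for $U_{j,j''}$ by polarization. Both are routine given the definitions in the preceding subsection, and both halves are forced by the characteristic $\neq 2$ assumption, since the identities (v) and (v') are essentially equivalent reformulations of the notion of a Jordan bimodule.
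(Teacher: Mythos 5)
Your proposal is correct and follows essentially the same route as the paper: the direction (v$'$)$\Rightarrow$(v) is the identical expansion of $U_jj'=2j(jj')-j^2j'$ via (v$'$), and the direction (v)$\Rightarrow$(v$'$) is the same polarization-at-$1$ argument, differing only cosmetically in that the paper first sets $j'=1$ to get $j^2\bt x=j\bt(j\bt x)$ and then linearizes in $j$, whereas you linearize $U_jj'$ in $j$ first and then specialize the new variable to $1$. Both computations check out, so no gap.
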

\begin{proof}
Assume that (v) holds. Since $U_j 1=j^2$, we have $j^2\bt x=j\bt(j\bt x)$. Linearizing this expression gives us (v').

Assume that (v') holds, we have
\begin{align*}
U_j j'\bt x&=(2j(jj')-j^2j')\bt x\\
&=\half (2 j\bt((jj')\bt x)+2 (jj')\bt (j \bt x)\\
&\hspace*{30ex}-j^2\bt( j'\bt x)- j'\bt (j^2\bt x))\\
&=\half (j^2\bt( j'\bt x)+j'\bt(j^2\bt x)+2j\bt(j'\bt(j\bt x))\\
&\hspace*{30ex}-j^2\bt( j'\bt x)- j'\bt( j^2\bt x))\\
&=j\bt(j'\bt(j\bt x)). \tag*{\qedhere}
\end{align*}
\end{proof}
\begin{definition}[{\cite[3.12]{ABG}}]
Let $J$ be a Jordan $k$-algebra and let $X$ be a $k$-vector space with action $\bt:J\times X\rightarrow X$. Then $X$ is a {\em special $J$-module} if the  conditions (i)-(v)  of the previous lemma are satisfied.
\end{definition}

\begin{lemma}\label{lem: skewsym}
Let $X$ be a special $J$-module and let $\lsk \cdot , \cdot \rsk:X\times X\rightarrow J$ be a bilinear skew-symmetric form. Then 
\begin{align}\label{skew id} 
U_j\lsk x,y\rsk=\lsk j\bt x,j\bt y\rsk\iff j\lsk x, y\rsk=\half (\lsk j\bt x, y\rsk+\lsk x,j\bt y\rsk).
\end{align}

\end{lemma}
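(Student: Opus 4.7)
The plan is to mimic the approach used to prove the preceding Lemma (the equivalence of conditions (v) and (v$'$) for special $J$-modules): the forward direction by linearization, the backward direction by direct expansion of $U_j$.

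For the forward direction, assume $U_j\lsk x,y\rsk = \lsk j\bt x, j\bt y\rsk$ for all $j \in J$ and $x,y \in X$. I will substitute $j \mapsto j + j'$ and expand, using bilinearity of $\lsk \cdot, \cdot \rsk$ and additivity of $\bt$ in the first slot. Subtracting the identity evaluated at $j$ and at $j'$ yields
\[ U_{j,j'}\lsk x,y\rsk = \lsk j \bt x, j' \bt y\rsk + \lsk j' \bt x, j \bt y\rsk. \]
Then I specialize $j' = 1$. A quick direct computation from $U_j a = 2j(ja) - j^2 a$ gives the standard Jordan identity $U_{j,1}a = 2ja$, and combined with $1 \bt x = x$ from condition (iv) of the preceding lemma this becomes $2j\lsk x,y\rsk = \lsk j \bt x, y\rsk + \lsk x, j \bt y\rsk$. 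Dividing by $2$ gives the second identity.

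For the backward direction, assume $j\lsk x,y\rsk = \tfrac{1}{2}(\lsk j\bt x, y\rsk + \lsk x, j\bt y\rsk)$. I will compute $U_j\lsk x,y\rsk = 2j(j\lsk x,y\rsk) - j^2\lsk x,y\rsk$ by applying the hypothesis twice to the first term, and once (with $j^2$ in place of $j$) to the second. For the second application I need $j^2 \bt x = j \bt (j \bt x)$, which is exactly the specialization of condition (v$'$) from the preceding lemma. Expanding $2j(j\lsk x,y\rsk)$ produces the four terms $\tfrac{1}{2}\lsk j \bt (j \bt x), y\rsk + \lsk j\bt x, j\bt y\rsk + \tfrac{1}{2}\lsk x, j\bt(j\bt y)\rsk$ (the middle coefficient is $1$ because two cross-terms coincide), while $j^2\lsk x,y\rsk$ contributes $\tfrac{1}{2}\lsk j\bt(j\bt x), y\rsk + \tfrac{1}{2}\lsk x, j\bt(j\bt y)\rsk$. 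The outer terms cancel exactly, leaving $\lsk j\bt x, j\bt y\rsk$.

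Both directions are just careful bookkeeping, so there is no real obstacle; the argument is formally parallel to the preceding Lemma, with the bilinear form $\lsk \cdot,\cdot\rsk$ playing the role that the action $\bt$ played there. I note in passing that the proof uses neither the skew-symmetry of $\lsk \cdot,\cdot\rsk$ nor any property of the second slot beyond bilinearity, so the equivalence in fact holds for any bilinear map $X \times X \to J$.
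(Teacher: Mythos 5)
Your proof is correct and follows essentially the same route as the paper: the forward direction via the linearization $U_{j+1}-U_j-U_1=2j\,\cdot$ specialized at $1$ (together with $1\bt x=x$), and the backward direction by expanding $U_j\lsk x,y\rsk = 2j(j\lsk x,y\rsk)-j^2\lsk x,y\rsk$ and using $j\bt(j\bt x)=j^2\bt x$. Your closing observation that neither skew-symmetry nor anything beyond bilinearity is needed is accurate; the paper's proof does not use these either.
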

\begin{proof}
\begin{compactitem}
\item[$\Rightarrow:$] This follows from $U_{j+1}j'-U_j j'-U_1 j'=2jj'$.

\item[$\Leftarrow:$] We have
\begin{multline*}
\begin{aligned}
U_j\lsk x,y\rsk&=2j(j \lsk x,y\rsk)-j^2 \lsk x,y\rsk\\
&=\half \bigl( \lsk j\bt( j\bt x),y\rsk+ \lsk x, j\bt( j\bt y)\rsk+2  \lsk  j\bt x,j\bt y\rsk\bigr)
\end{aligned}\\
-\half \bigl(\lsk j^2 \bt x,y\rsk+\lsk  x,j^2 \bt y\rsk\bigr)
\end{multline*}
The result follows since $j\bt(j\bt x)=j^2\bt x$.\qedhere
\end{compactitem}
\end{proof}

In the following Lemma we consider the Peirce decomposition of special $J$-modules; see also \cite[6.61]{ABG}.

\begin{lemma}\label{peirce Jtern}
Let $J$ be a Jordan $k$-algebra with supplementary proper idempotents $e_0$ and $e_1$. Let $J_0,J_\subhalf,J_1$ be the Peirce subspaces of $J$ with respect to $e_1$. 
 Let $X$ be a special $J$-module and define 
\begin{align*}
X_0&:=\{x\in X\mid e_0 \bt x=x\}=\{x\in X\mid e_1 \bt x=0\},\\
 X_1&:=\{x\in X\mid e_0 \bt x=0\}=\{x\in X\mid e_1\bt x=x\}.
 \end{align*}
 Then
 \begin{compactenum}[\rm (i)]
 \item  We have $e_0\bt X=X_0$, $e_1\bt X=X_1$ and  $X=X_0\oplus X_1$.
\item For $i\in \{0,1\}$ and $j=1-i$ we have 
\begin{equation}\label{eq:peircemodule}J_i\bt X_i\subseteq X_i,\quad J_i\bt X_j=0,\quad J_\subhalf\bt X_i\subseteq X_j.\end{equation}

\item Let $\lsk \cdot , \cdot \rsk:X\times X\rightarrow J$ be a skew bilinear form satisfying \eqref{skew id}, then 
\begin{equation}\label{eq:skewpeirce}\lsk X_i,X_i\rsk\subseteq J_i,\quad \lsk X_i,X_j\rsk \subseteq J_\subhalf \end{equation}
for $i\in \{0,1\}$ and $j=1-i$.
\item Assume there exists an element $u\in J_\subhalf$ such that $u^2=1$. The map \[X_0\to X_1:x\mapsto u\bt x\] is a vector space isomorphism, called the {\em connecting morphism.}
\end{compactenum}
\end{lemma}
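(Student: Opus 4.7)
The plan is to work systematically through the four items using condition (v') of the $J$-module structure together with its analog \eqref{skew id} for the skew form, all the time exploiting that $e_0, e_1$ are orthogonal idempotents with $e_0 + e_1 = 1$.

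For part (i), the two descriptions of $X_0$ (and of $X_1$) agree because $(e_0 + e_1) \bt x = 1 \bt x = x$ forces $e_0 \bt x = x \iff e_1 \bt x = 0$. Applying (v') to the idempotent relation $e_0 \cdot e_0 = e_0$ gives $e_0 \bt (e_0 \bt x) = e_0 \bt x$, so $e_0 \bt X \subseteq X_0$, with the reverse inclusion immediate; the same argument works for $e_1$. The decomposition $X = X_0 \oplus X_1$ then follows from $x = e_0 \bt x + e_1 \bt x$, and $X_0 \cap X_1 = \{0\}$ is read off directly from the defining equations.

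For part (ii), for any $a \in J_\ell$ (with $\ell \in \{0, \subhalf, 1\}$) and $x \in X_m$ (with $m \in \{0, 1\}$), condition (v') applied with $j = e_1$ and $j' = a$, combined with $e_1 a = \ell a$ and $e_1 \bt x = m x$, yields
\[ e_1 \bt (a \bt x) = (2\ell - m)\, a \bt x. \]
When $2\ell - m \in \{0, 1\}$ this places $a \bt x$ in the correct $X_i$, delivering the three inclusions $J_i \bt X_i \subseteq X_i$ and $J_\subhalf \bt X_i \subseteq X_j$. The remaining cases $(\ell, m) = (0, 1)$ and $(1, 0)$ produce coefficients $-1$ and $2$ respectively, and the main (small) obstacle is to conclude $a \bt x = 0$ from these. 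For this I observe that $e_1 \bt$ is an idempotent $k$-linear operator on $X$ (again by (v') applied to $e_1 \cdot e_1 = e_1$), so its eigenvalues must lie in $\{0, 1\}$; an eigenvector for any other eigenvalue is forced to vanish.

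Part (iii) is strictly parallel: applying the equivalent identity in \eqref{skew id} with $j = e_1$ to $x \in X_\ell$, $y \in X_m$ (with $\ell, m \in \{0, 1\}$) gives $e_1 \lsk x, y\rsk = \half(\ell + m)\, \lsk x, y\rsk$, which equals $0$, $\half$ or $1$ times $\lsk x, y\rsk$ according as $(\ell, m)$ is $(0,0)$, mixed, or $(1,1)$, placing $\lsk x, y\rsk$ in $J_0$, $J_\subhalf$ or $J_1$ respectively. For part (iv), part (ii) already shows that $u \bt$ sends $X_0$ into $X_1$ and $X_1$ into $X_0$; applying (v') to $u \cdot u = u^2 = 1$ gives $u \bt (u \bt x) = 1 \bt x = x$, so the two restrictions of $u \bt$ are mutually inverse $k$-linear maps, hence isomorphisms.
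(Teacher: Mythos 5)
Your proposal is correct and follows essentially the same route as the paper: part (i) from $(e_ie_i)\bt x=e_i\bt(e_i\bt x)$ and $(e_0+e_1)\bt x=x$, parts (ii)--(iii) by evaluating $e_1\bt(j\bt x)=2(e_1j)\bt x-j\bt(e_1\bt x)$ and $e_1\lsk x,y\rsk=\half(\lsk e_1\bt x,y\rsk+\lsk x,e_1\bt y\rsk)$ on the Peirce pieces, and part (iv) from $u\bt(u\bt x)=u^2\bt x=x$. Your eigenvalue phrasing for ruling out the coefficients $-1$ and $2$ is just a restatement of the paper's appeal to $X=X_0\oplus X_1$ (and, like the paper, it implicitly uses $\Char(k)\neq 2$ to exclude the eigenvalue $2$).
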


\begin{proof}
 \begin{compactenum}[\rm (i)]
 \item Let $i\in \{0,1\}$, we have $e_i\bt(e_i\bt x)=(e_i e_i)\bt x=e_i\bt x$ for all $x\in X$, therefore $e_i\bt X= X_i$. Since $(e_0+e_1)\bt x=x$, we have $X=X_0\oplus X_1$. 
 \item This follows by evaluating \[e_1\bt (j \bt x)=2(e_1j)\bt x-j\bt(e_1\bt x)\] for all combinations of $j\in J_0$, $J_1$ or $J_\subhalf$ and $x\in X_0$ or $X_1$, using the fact that $X=X_0\oplus X_1$.
 \item This follows from evaluating $e_1\lsk x,y\rsk=\half(\lsk e_1\bt x,y\rsk+\lsk x, e_1\bt y\rsk)$ for $x,y\in X_0$ or $X_1$. 
 \item It follows from (ii) that $u\bt x\in X_1$ iff $x\in X_0$. Since $u\bt (u\bt x)=(u u)\bt x=x$ the connecting morphism is an isomorphism.
\qedhere
 \end{compactenum}
\end{proof}

\begin{theorem} \label{constr quad alg}Let $\Char(k)\neq 2$.

Let $J$ be the reduced spin factor of the non-degenerate anisotropic quadratic space $(k,V,q)$ with base point $u$: $J=ke_0\oplus V\oplus ke_1$.

Let $X$ be a non-trivial special $J$-module equipped with a bilinear skew-symmetric form $\lsk \cdot , \cdot \rsk:X\times X\rightarrow J$ satisfying \eqref{skew id}.

Assume that the following holds:
\begin{align}
&\forall x\in X_0,v\in V:&&\lsk v\bt x,x\rsk \bt x=v\bt( u\bt(\lsk u\bt x,x\rsk \bt x)),\label{1evoorwaarde}\\
&\forall x\in X_0\minuszero:&& \lsk u\bt x,x\rsk\neq0.\label{2evoorwaarde}
\end{align}
We define 
\begin{align*}
\cdot&:X_0\times V\rightarrow X_0: x\cdot v=v\bt(u\bt x)\\
h&:X_0\times X_0\rightarrow V:(x,y)\mapsto\lsk u\bt x,y\rsk.
\end{align*}

Then $(k,V,q,u,X_0,\cdot,h)$ is a quadrangular algebra.
\end{theorem}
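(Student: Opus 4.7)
The plan is to verify the axioms of Definition~\ref{def:quad} for the tuple $(k,V,q,u,X_0,\cdot,h)$ one at a time. The main tools are the Peirce decomposition $X=X_0\oplus X_1$ (Lemma~\ref{peirce Jtern}), the multiplication rules \eqref{multeiej}--\eqref{multvw} of the reduced spin factor, and the identity
\[ u\bt(v\bt(u\bt x)) \;=\; U_u v\bt x \;=\; v^\sigma\bt x, \]
which follows from condition (v) for the special $J$-module together with $q(u)=1$.

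I would first dispose of the easy axioms. Bilinearity (A1) and (B1) is inherited from $\bt$ and $\lsk\cdot,\cdot\rsk$; axiom (A2) follows from $u^2=q(u)(e_0+e_1)=e_0+e_1$; axiom (C) is the definition of $\theta$; and axiom (D2) is \eqref{2evoorwaarde} rewritten through $\pi(x)=\tfrac12\lsk u\bt x,x\rsk$. For (A3), the displayed identity above gives $(x\cdot v)\cdot v^{-1}=v^{-1}\bt(v^\sigma\bt x)=\tfrac{1}{q(v)}(v^\sigma)^2\bt x$, which equals $x$ because $(v^\sigma)^2=q(v)(e_0+e_1)$ by \eqref{multvw}. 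For (B2) and (B3) I would apply the equivalent form $j\lsk z,w\rsk=\tfrac12(\lsk j\bt z,w\rsk+\lsk z,j\bt w\rsk)$ of \eqref{skew id}: the Peirce grading \eqref{eq:skewpeirce} forces each of $\lsk u\bt x,u\bt y\rsk$ and $\lsk v\bt(u\bt x),y\rsk$ to lie in a single Peirce component of $J$, so matching $e_0$- and $e_1$-coefficients against \eqref{multvw} applied to $v\cdot h(x,y)$ yields the clean formulas
\[ \lsk u\bt x,u\bt y\rsk = f(u,h(x,y))\,e_1, \qquad \lsk v\bt(u\bt x),y\rsk = f(v,h(x,y))\,e_0, \]
from which (B2) and (B3) drop out after a short rearrangement using \eqref{multeiv}.

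The main obstacle is (D1). I would first observe that $f(u,\pi(x))=0$: applying \eqref{skew id} to $u\lsk u\bt x,x\rsk$ produces zero by skew-symmetry, and \eqref{multvw} then forces the scalar $f(u,\pi(x))$ to vanish, so $\pi(x)^\sigma=-\pi(x)$. Next I would derive the formula
\[ \theta(x,v) \;=\; \tfrac12\lsk v^\sigma\bt x,x\rsk - f(v,\pi(x))\,u, \]
by applying \eqref{skew id} to $u\lsk v\bt(u\bt x),x\rsk$ (both sides of which lie in $ku\subset V$) and using the first of the bracket formulas above. With these in hand, the right-hand side of (D1) simplifies via the displayed identity to $(x\cdot\pi(x))\cdot v=v\bt(\pi(x)^\sigma\bt x)=-v\bt(\pi(x)\bt x)$. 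Expanding the left-hand side $\theta(x,v)\bt(u\bt x)$ through the formula for $\theta$, and invoking the alternative form (v') of the special-module axiom to shift the $u$-action across $\bt$, I would apply \eqref{1evoorwaarde} with $v$ replaced by $v^\sigma$ to rewrite $\lsk v^\sigma\bt x,x\rsk\bt(u\bt x)$. After cancellation (D1) reduces to the identity
\[ f(u,v)\,\pi(x)\bt(u\bt x)+(v+v^\sigma)\bt(\pi(x)\bt x)=0, \]
which holds because $v+v^\sigma=f(u,v)\,u$ and $u\bt(\pi(x)\bt x)=-\pi(x)\bt(u\bt x)$, the latter by (v') together with $f(u,\pi(x))=0$. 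This chain is the delicate core of the proof: it is the only place where \eqref{1evoorwaarde} is used essentially, and where the specific structure of the reduced spin factor enters decisively through \eqref{multeiv}--\eqref{multvw} and the formula $U_u v = v^\sigma$.
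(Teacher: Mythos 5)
Your proposal is correct and follows essentially the same strategy as the paper: a direct axiom-by-axiom verification using the Peirce decomposition of $X$ and of $J$, the identity $U_u v = v^\sigma$ on $V$, with (D1) reduced to \eqref{1evoorwaarde} and (D2) being \eqref{2evoorwaarde}. The only differences are organizational — you extract explicit closed forms such as $\lsk u\bt x,u\bt y\rsk=f(u,h(x,y))e_1$, $\lsk v\bt(u\bt x),y\rsk=f(v,h(x,y))e_0$ and a formula for $\theta(x,v)$, and apply \eqref{1evoorwaarde} with $v^\sigma$ for general $v$, whereas the paper runs chains of equivalences and reduces (D1) to the case $v\perp u$ — and all your intermediate formulas check out.
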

\begin{proof}
Notice that $e_0,e_1\in J$ are supplementary proper idempotents and that $u\in J_\subhalf$ such that $u^2=q(u)1=1$. Thus we can apply Lemma \ref{peirce Jtern} with $J_0=ke_0, J_\subhalf=V,J_1=ke_1$. It follows from \eqref{eq:peircemodule} and \eqref{eq:skewpeirce} that the maps $\cdot$ and $h$ are well defined. To start we show that $U_u(v)=v^\si$ for all $v\in J_\subhalf$ with $\si$ as in Definition \ref{def:quad}:
\begin{align*}
U_u(v)&=2u(uv)-v\\
&=u(f(u,v)1)-v\\
&=f(u,v)u-v=v^\si.
\end{align*}

We verify that all the axioms of a quadrangular algebra given in Definition \ref{def:quad} hold.
\begin{compactitem}
\item[(A1)] This follows from linearity of $\bt$.
\item[(A2)] Let $x\in X_0$, then $x\cdot u=u\bt(u\bt x)=u^2\bt x=1\bt x=x$.
\item[(A3)] Let $x\in X_0$ and $v\in J_\subhalf$, then
\begin{align*} 
(x\cdot v)\cdot v^\si&=U_u(v)\bt(u\bt (v\bt (u\bt x)))\\
&=u\bt(v\bt(v\bt(u\bt x)))\\
&=\half f(v,v) \ u\bt (1\bt (u\bt x))\\
&=q(v) x.
\end{align*}
\item[(B1)] This follows by from bilinearity of $\lsk .,.\rsk$.
\item[(B2)]  Let $x,y\in X_0$ and $v\in J_\subhalf$, then by applying consecutively \eqref{multvw}; \eqref{skew id} and (A2); Lemma \ref{lem: module}.(v) we find 
\begin{align*}
&&h(x,y\cdot v)&=h(y,x\cdot v)+f(h(x,y),u) v\\
&\iff&\lsk u\bt x,v\bt(u\bt y)\rsk&=\lsk u\bt y,v\bt(u\bt x)\rsk+f(\lsk u\bt x,y\rsk,u)v\\
&\iff & \lsk u\bt x,v\bt(u\bt y)\rsk\hspace*{5ex}\\
&&+\lsk v\bt(u\bt x),u\bt y\rsk&=2(\lsk u\bt x,y\rsk u)v\\
&\iff &2v\lsk u\bt x,u\bt y\rsk&=(\lsk u\bt x,u\bt y\rsk +\lsk x,y\rsk)v\\
&\iff & v U_u\lsk x,y\rsk&=\lsk x,y\rsk v
\end{align*}
From \eqref{eq:skewpeirce} we know that $\lsk x,y\rsk=t e_0$ for some $t\in k$. It follows from Definition \ref{def: redspin} that \[v U_u(t e_0)=v( t e_1)=\half t v=(t e_0)v.\]
\item[(B3)] Let $x,y\in X_0$ and $v\in J_\subhalf$, then by applying consecutively \eqref{multvw}; \eqref{skew id} we find 
\begin{align*}
&&f(h(x\cdot v,y),u)&=f(h(x,y),v)\\
&\iff&f(\lsk u\bt(v\bt(u\bt x)),y\rsk,u)&=f(\lsk u\bt x,y\rsk,v)\\
&\iff&\lsk u\bt(v\bt(u\bt x)),y\rsk u&=\lsk u\bt x,y\rsk v\\
&\iff&\lsk u\bt(v\bt(u\bt x)),u\bt y\rsk\hspace*{5ex}\\
&&+\lsk v\bt(u\bt x),y\rsk&=\lsk u\bt x,v\bt y\rsk+\lsk v\bt(u\bt x),y\rsk\\
&\iff&\lsk u\bt(v\bt(u\bt x)),u\bt y\rsk&=\lsk u\bt x,v\bt y\rsk\\
\intertext{Since $U_u=\si$ on $J_\subhalf$ is an involution, by \eqref{skew id} this is equivalent to}
&\iff&\lsk v\bt(u\bt x), y\rsk &=\lsk x,u\bt(v\bt y)\rsk\\
&\iff&\lsk v\bt(u\bt x),v\bt(v\bt y)\rsk&=q(v)\lsk u\bt(u\bt x),u\bt(v\bt y)\rsk
\end{align*}
the last equivalence follows from \eqref{multvw}. From \eqref{eq:skewpeirce} we know that $\lsk u\bt x,v\bt y\rsk=t e_1$ for some $t\in k$. The last equation reduces to 
\[U_v(t e_1)=q(v)U_u(t e_1)\]
This holds since $U_v(e_1)=q(v)e_0$ and $U_u(e_1)=e_0$ by  Definition \ref{def: redspin}.
\item[(C)] $\theta(x,v):=\half \lsk u\bt x,v\bt (u \bt x)\rsk$.
\item[(D1)] Let $x\in X_0$ and $v\in J_\subhalf$. Since $f$ is non-degenerate, we have $V=ku\oplus u^\perp$. Now
\begin{align*}
&&x\cdot h(x,x\cdot v)&= (x\cdot h(x,x))\cdot v\\
&\iff& \lsk u\bt x,v\bt (u\bt x)\rsk \bt (u\bt x)&=v\bt( u \bt (\lsk u\bt x,x\rsk \bt (u\bt x)))
\end{align*}
Since this expression is linear in $v$ and trivial for $v\in ku$, we can assume $v\in u^\perp$ and thus $f(u,v)=uv=0$ and hence $u\bt(v\bt x)=-v\bt(u\bt x)$. In this case we continue as follows:
\begin{align*}
&\iff& -\lsk u\bt x,u\bt (v\bt x)\rsk \bt (u\bt x)&=v\bt(U_u\lsk u\bt x, x\rsk \bt x)\\
&\iff& U_u\lsk x,v\bt x\rsk \bt (u\bt x)&=-v\bt(\lsk x,u\bt x\rsk \bt x)\\
&\iff& u\bt (\lsk x,v\bt x\rsk \bt x)&=-v\bt(\lsk x,u\bt x\rsk \bt x)\\
&\iff& \lsk x,v\bt x\rsk \bt x&=-u\bt(v\bt(\lsk x,u\bt x\rsk \bt x))\\
&\iff& \lsk x,v\bt x\rsk \bt x&=v\bt(u\bt(\lsk x,u\bt x\rsk \bt x)).
\end{align*}

This is exactly \eqref{1evoorwaarde}.
\item[(D2)] 
This is assumption \eqref{2evoorwaarde}.\qedhere
\end{compactitem}
\end{proof}

\subsection{Quadrangular algebras from $J$-ternary algebras}\label{sec:Jtern}\label{sec32}
In this subsection we assume $\cha(k)\neq 2,3$ and we prove that an arbitrary `anisotropic' non-trivial $J$-ternary algebra, where $J$ is as in Theorem~\ref{constr quad alg}, satisfies the assumptions of Theorem~\ref{constr quad alg}. It follows that we can construct quadrangular algebras from $J$-ternary algebras.

We remind the reader that the entire article \cite{ABG} is only written for fields of characteristic zero, but that the concept of a J-ternary algebra and its basic properties can be generalized without any adjustments to hold in fields of characteristic different from 2 and 3.

\begin{definition}\label{def:Jtern}
Let $\cha(k)\neq 2,3$, let $J$ be a Jordan $k$-algebra, let $X$ be a special $J$-module with action $\bt$.

Assume $\lsk\ ,\ \rsk:X\times X\rightarrow J$ is a skew-symmetric bilinear map, and $\lsk\ ,\  ,\ \rsk:X\times X\times X\rightarrow X$ is a trilinear product.
Then $X$ is called a $J$-ternary algebra if the following axioms hold for all $j\in J$, $x,y,z,v,w\in X$:

\begin{compactitem}\setlength{\itemsep}{.6ex}
\item[(JT1)] $j\lsk x, y\rsk=\half \lsk j\bt x,y\rsk+\half \lsk x, j\bt y\rsk \quad$ (This is the right side of \eqref{skew id}.) 
\item[(JT2)] $j\bt \lsk x,y,z\rsk=\lsk j\bt x,y,z\rsk -\lsk x,j\bt y,z\rsk +\lsk x,y,j \bt z\rsk$
\item[(JT3)] $\lsk x,y,z\rsk=\lsk z,y,x\rsk-\lsk x,z\rsk\bt y$
\item[(JT4)] $\lsk x,y,z\rsk=\lsk y,x,z\rsk+\lsk x,y\rsk\bt z$
\item[(JT5)] $\lsk \lsk x,y,z\rsk,w\rsk+\lsk z, \lsk x,y,w\rsk\rsk=\lsk x,\lsk z,w\rsk \bt y\rsk$
\item[(JT6)] $\lsk x,y,\lsk z,w,v\rsk\rsk=\lsk\lsk x,y,z\rsk,w,v\rsk+\lsk z,\lsk y,x,w\rsk,v\rsk+\lsk z,w,\lsk x,y,v\rsk\rsk$.
\end{compactitem}
\medskip
\end{definition}

\begin{theorem}\label{th: Jtern to quadr}Let $\Char(k)\neq2,3$. 
Let $J$ be the reduced spin factor of the non-degenerate anisotropic quadratic space $(k,V,q)$ with base point $u$. Let $X$ be a non-trivial $J$-ternary algebra such that $\lsk u\bt x,x\rsk \neq 0$ for all $x\in X_0\setminus\{0\}$.

Then $X$ satisfies \eqref{1evoorwaarde}. Therefore $(k,V,q,u,X_0,\cdot ,h)$ is a quadrangular algebra, with $\cdot$ and $h$ as in Theorem \ref{constr quad alg}.
\end{theorem}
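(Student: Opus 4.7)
My strategy is to reformulate \eqref{1evoorwaarde} as an identity in the triple product alone, and then derive it from the $J$-ternary axioms. First, applying (JT2) with $j = u$ to $\lsk x,x,x\rsk$ and using (JT3) and (JT4) to rearrange the resulting triple products (in which two entries coincide), one obtains
\[
u\bt\lsk x,x,x\rsk = 2\lsk u\bt x, x\rsk \bt x + \lsk u\bt x, x, x\rsk,
\]
and analogously with $u$ replaced by any $v\in V$. Thus $\lsk v\bt x, x\rsk\bt x = \tfrac{1}{2}\bigl(v\bt\lsk x,x,x\rsk - \lsk v\bt x, x, x\rsk\bigr)$, which expresses the bilinear-bracket term appearing in \eqref{1evoorwaarde} purely in terms of the triple product. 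Substituting these formulas into both sides of \eqref{1evoorwaarde} and using $u^2 = 1$ in $J$, a short calculation shows that \eqref{1evoorwaarde} is equivalent to
\[
v \bt \bigl(u \bt \lsk u\bt x, x, x\rsk\bigr) = \lsk v\bt x, x, x\rsk, \qquad (\star)
\]
an identity now involving only triple products.

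The identity $(\star)$ is $k$-linear in $v$ and trivially holds for $v = u$ (since $u^2 = 1$), so by non-degeneracy of $f$ we may restrict to $v \in u^\perp$; in that case $vu = 0$ in $J$, and Lemma~\ref{lem: module}(v') gives the crucial relation $v\bt(u\bt y) = -u\bt(v\bt y)$ for every $y \in X$. To prove $(\star)$ in this setting, I would expand its left-hand side by applying (JT2) twice (first with $j = u$ to $\lsk u\bt x, x, x\rsk$, then with $j = v$ to each resulting summand), substituting $v\bt(u\bt x) = -u\bt(v\bt x)$ wherever it appears. This produces a sum of nine triple products with entries in $\{x, u\bt x, v\bt x, u\bt(v\bt x)\}$. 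Using (JT3) and (JT4), these can be grouped so that pairs of triple products either cancel directly or combine into bilinear-bracket terms such as $\lsk u\bt x, v\bt x\rsk\bt x$, $\lsk v\bt x, x\rsk\bt(u\bt x)$ and $\lsk u\bt x, x\rsk\bt(v\bt x)$.

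The remaining balance among these bilinear-bracket terms should then follow from a suitable instance of (JT5), for example with inputs $(x, y, z, w) = (u\bt x, x, x, v\bt x)$ and its counterpart with $u$ and $v$ swapped. The main obstacle is the combinatorial bookkeeping in this final expansion: the signs produced by (JT3) and (JT4) when converting between triple products and bilinear brackets must be tracked with care, and identifying the precise instance of (JT5) that closes the argument requires some experimentation. Once the correct regrouping is in place, the proof of $(\star)$, and hence of \eqref{1evoorwaarde}, reduces to a direct verification, and Theorem~\ref{constr quad alg} then yields the desired quadrangular algebra.
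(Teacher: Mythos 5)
Your opening reduction is sound and is in fact the same first move as the paper's proof: from (JT2), (JT3) and (JT4) one gets $u\bt\lsk x,x,x\rsk=\lsk u\bt x,x,x\rsk+2\lsk u\bt x,x\rsk\bt x$, and substituting this (and its analogue for $v$) into \eqref{1evoorwaarde} correctly reduces the claim to your identity $(\star)$. The genuine gap is that the proof of $(\star)$ is never actually carried out: the plan of expanding twice by (JT2) into nine triple products, regrouping via (JT3)/(JT4), and closing with some unspecified instance of (JT5) is left at the level of ``requires experimentation'', so the essential step of the theorem is missing. Moreover, that machinery is a detour.

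The one fact you are missing is that $\lsk x,v\bt x,x\rsk=0$ for all $x\in X_0$ and $v\in J_\subhalf$. It follows from (JT2) with $j=e_1$: since $e_1\bt x=0$ and $e_1\bt(v\bt x)=v\bt x$, one gets $e_1\bt\lsk x,v\bt x,x\rsk=-\lsk x,v\bt x,x\rsk$, and $-1$ is not an eigenvalue of $e_1$ on $X=X_0\oplus X_1$, so the element vanishes. Combined with (JT4) this gives $\lsk v\bt x,x,x\rsk=\lsk v\bt x,x\rsk\bt x$, and your first display collapses to $v\bt\lsk x,x,x\rsk=3\lsk v\bt x,x,x\rsk$ for every $v\in V$ (this is where $\Char(k)\neq 3$ is used). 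Now $(\star)$ is immediate:
\begin{equation*}
v\bt\bigl(u\bt\lsk u\bt x,x,x\rsk\bigr)=\tfrac{1}{3}\,v\bt\bigl(u\bt(u\bt\lsk x,x,x\rsk)\bigr)=\tfrac{1}{3}\,v\bt\lsk x,x,x\rsk=\lsk v\bt x,x,x\rsk,
\end{equation*}
using $u^2=1$ together with $\lsk x,x,x\rsk\in X_0$ (another consequence of (JT2) with $j=e_1$). In particular your restriction to $v\perp u$, the relation $v\bt(u\bt y)=-u\bt(v\bt y)$, and any appeal to (JT5) are unnecessary. With this vanishing lemma inserted, your argument becomes complete and coincides with the paper's.
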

\begin{proof}
Let $i\in\{0,1\}$, we will first show that for all $x\in X_i$ and $v\in J_\subhalf$ 
\begin{align}\label{charnot3}v\bt\lsk x,x,x\rsk=3\lsk v\bt x,x\rsk \bt x=3 \lsk v\bt x,x,x\rsk=\frac{3}{2} \lsk x,x,v\bt x\rsk.\end{align}

From (JT2) we find that $e_1\bt \lsk x, v\bt x,x\rsk=t \lsk x, v\bt x,x\rsk$ with $t=-1$ if $i=0$ and $t=2$ if $i=-1$, therefore
\[\lsk x, v\bt x,x\rsk=0.\]

Using (JT3) and (JT4) respectively we get
\[\lsk x,v\bt x\rsk \bt x=\lsk v\bt x,x,x\rsk-\lsk x,x,v\bt x\rsk\text{ and }\lsk x,v\bt x\rsk \bt x=-\lsk v\bt x,x,x\rsk.\]
Combining these equations, we obtain  
\[\lsk x,x,v\bt x\rsk=2\lsk v\bt x,x,x\rsk=2 \lsk v\bt x,x\rsk \bt x.\]
 From (JT2) we have 
\[v\bt\lsk x,x,x\rsk=\lsk v\bt x,x,x\rsk+\lsk x,x,v\bt x\rsk.\]

Combining the two last formulas proves \eqref{charnot3}.
Since $\Char(k)\neq3$, it follows from \eqref{charnot3} that \eqref{1evoorwaarde} is equivalent with 
\[v\bt\lsk x,x,x\rsk=v\bt( u\bt(u\bt\lsk x,x,x\rsk)).\]
Since this last equation holds we have proved that \eqref{1evoorwaarde} holds.
\end{proof}

\begin{remark}
\begin{compactenum}[(\rm i)]
\item We show that for all $x\in X_0$, $\lsk u\bt x,x\rsk=0 \iff \lsk x,x,x\rsk=0$.

First remark that if $v\bt x=0$ we have $v=0$ or $x=0$: it follows from $v\bt x=0$ that $v\bt(v\bt x)=q(v)x=0$, since $q$ is anisotropic we have $v=0$ or $x=0$. Now we have
\begin{align*}
&&\lsk u\bt x,x\rsk &=0\\
&\iff&\lsk u\bt x,x\rsk \bt x&=0\\
&\iff&u\bt \lsk x,x,x\rsk &=0\quad \text{by \eqref{charnot3}}\\
&\iff& \lsk x,x,x\rsk &=0.
\end{align*}
\item We have to demand that  $\lsk x,x,x\rsk \neq0$ for all $x\in X_0\setminus\{0\}$, because there exist $J$-ternary algebras which fulfill all the requirements but where $\lsk x,x,x\rsk =0$ for some $x\neq0\in X_0$. For example, consider \cite[Example 6.81]{ABG} with the zero skew hermitian form.  Examples like this we clearly want to avoid.
\end{compactenum}
\end{remark}

\subsection{Construction of quadrangular algebras of pseudo-quadratic form type}\label{sec:pseudo-quad}\label{sec33}
Let $k$ be a field of characteristic not 2.

We rely on the example \cite[6.81]{ABG} to obtain a quadrangular algebra of pseudo-quadratic form type using Theorem \ref{constr quad alg}. In combination with Section~\ref{sec:E6E7E8} this will show that all quadrangular algebras of characteristic not 2 can be obtained using the construction in Theorem \ref{constr quad alg}.

In Section~\ref{sec4} we will show that each Moufang quadrangle in characteristic not 2 can be obtained from a construction that generalizes the construction in Theorem \ref{constr quad alg}.

Let $(L/k,\si)$ be a quadratic pair with quadratic form $q(\ell)=\ell\ell^\si$ and let $(L,\si,X,h,\pi)$ be a standard anisotropic pseudo-quadratic space (see Section~\ref{def:pqs}), so $(k,L,q,u,X,\text{ scalar multiplication}, h)$ is a quadrangular algebra.

\begin{definition} 
\begin{compactenum}[(\rm i)]
\item Define $J=\mathcal{H}(M_2(L),\si)$ (see Definition \ref{def: matrix} and Remark \ref{mat quadr pair}); this Jordan algebra is a reduced spin factor of the quadratic form \[Q:J_\subhalf\rightarrow k:\begin{bmatrix}0&\ell^\si \\ \ell &0\end{bmatrix}\mapsto q(\ell).\]
As before we define $e_0=\begin{bmatrix}1&0\\0&0\end{bmatrix}, e_1=\begin{bmatrix}0&0\\0&1\end{bmatrix}, u=\begin{bmatrix}0&1\\1&0\end{bmatrix}\in J:$ $u$ is a base point of $Q$.

\item
Define $\widetilde{X}=X^2$, the $1\times 2$ row vectors over X. 

\item We define the action of $J$ on $\Xt$ as\footnote{On the right hand side the usual matrix multiplication is used.}  $j\bt x:=xj\in \widetilde{X}$ for $j\in J, x\in \widetilde{X}$. 

\item Define $\psi: \Xt\times\Xt\rightarrow M_2(L): \psi([x_1,x_2],[y_1,y_2]):=\big[ h(x_i,y_j)\big]$, now we define the skew product $\Xt\times \Xt\rightarrow J$ 
\begin{align*}
\lsk[x_1,x_2],[y_1,y_2]\rsk&:=\psi ([x_1,x_2],[y_1,y_2])-\psi([y_1,y_2],[x_1,x_2])\\
&=\begin{bmatrix} h(x_1,y_1)-h(y_1,x_1)& h(x_1,y_2)-h(y_1,x_2)\\ -h(y_2,x_1)+h(x_2,y_1)& h(x_2,y_2)-h(y_2,x_2) \end{bmatrix}. 
\end{align*}
\end{compactenum}
\end{definition}

With respect to $e_1$ we have \[\Xt_0=\{[x,0]\mid x\in X\}, \Xt_1=\{[0,x]\mid x\in X\}.\]

\begin{lemma}
The space $\Xt$ is a non-trivial special $J$-module with skew-symmetric bilinear form $\lsk \cdot , \cdot \rsk$ that satisfies \eqref{skew id}, \eqref{1evoorwaarde} and \eqref{2evoorwaarde} hold as well.

Under the identifications \[J_\subhalf\cong L: \ell\leftrightarrow \begin{bmatrix}0&\ell^\si\\ \ell &0\end{bmatrix}\text{ and }\Xt_0\cong X: [x,0]\leftrightarrow x,\] the quadrangular algebra defined in Theorem  \ref{constr quad alg} is exactly the pseudo-quadratic space we started with:
\[(k,L,q,u,X,\text{ scalar multiplication}, h)\]
\end{lemma}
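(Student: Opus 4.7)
The plan is to verify each requirement directly, exploiting the fact that the action $j \bullet x = xj$ and the skew-product $\lsk\cdot,\cdot\rsk$ come from the underlying associative structure $M_2(L)$, so that most conditions reduce to matrix identities combined with the sesquilinearity of $h$.

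First I would check that $\Xt$ is a special $J$-module. Conditions (i)--(iv) of Lemma~\ref{lem: module} are immediate from the definition of matrix multiplication. For (v), the key point is that $J = \mathcal{H}(M_2(L),\sigma t)$ sits inside the associative algebra $M_2(L)$, and for $j,j' \in J$ the Jordan $U$-operator is $U_j j' = j j' j$ (ordinary matrix product). Hence $j\bullet(j'\bullet(j\bullet x)) = ((xj)j')j = x(jj'j) = U_j j' \bullet x$.

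Next I would verify that $\lsk\cdot,\cdot\rsk$ actually takes values in $J$: applying the involution $\sigma t$ to $\psi(x,y)$ sends its $(a,b)$ entry $h(x_a,y_b)$ to $h(x_b,y_a)^\sigma = -h(y_a,x_b)$, so $(\sigma t)\psi(x,y) = -\psi(y,x)$, and thus $\sigma t$ fixes $\psi(x,y)-\psi(y,x)$. Skew-symmetry and bilinearity are obvious. For \eqref{skew id}, I would use the sesquilinearity $h(x\alpha,y\beta)=\alpha^\sigma h(x,y)\beta$, which together with matrix bookkeeping gives $\psi(xj,yj) = j^{\sigma t}\psi(x,y)j = j\psi(x,y)j$ for $j\in J$. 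Subtracting the analogous expression for the swapped arguments yields $\lsk xj,yj\rsk = j\lsk x,y\rsk j = U_j\lsk x,y\rsk$, which is \eqref{skew id}.

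For \eqref{1evoorwaarde} and \eqref{2evoorwaarde}, I would restrict to $x=[x_1,0]\in\Xt_0$ and a general $v=\bigl[\begin{smallmatrix}0&m^\sigma\\ m&0\end{smallmatrix}\bigr]\in V$. A short computation gives $u\bullet x=[0,x_1]$, $v\bullet x=[0,x_1m^\sigma]$, and then
\[\lsk v\bullet x,x\rsk=\begin{bmatrix}0&-h(x_1,x_1)m^\sigma\\ m h(x_1,x_1)&0\end{bmatrix}.\]
Applying this matrix to $x$ on the right produces $[0,-x_1h(x_1,x_1)m^\sigma]$. For the right-hand side of \eqref{1evoorwaarde} one first computes $\lsk u\bullet x,x\rsk\bullet x = [0,-x_1h(x_1,x_1)]$, then applies $u$ (to flip components) and $v$ (to restore the factor $m^\sigma$), obtaining the same vector. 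For \eqref{2evoorwaarde}, $\lsk u\bullet x,x\rsk = 0$ forces $h(x_1,x_1)=0$, hence $\pi(x_1)=\tfrac12 h(x_1,x_1)\equiv 0\pmod{L_\sigma}$, so anisotropy of $(L,\sigma,X,h,\pi)$ forces $x_1=0$.

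Finally, for the identification statement I would transport everything through the maps $L \to J_{1/2}$, $\ell\mapsto\bigl[\begin{smallmatrix}0&\ell^\sigma\\ \ell&0\end{smallmatrix}\bigr]$ and $X\to\Xt_0$, $x\mapsto[x,0]$. The scalar action becomes $x\cdot v = v\bullet(u\bullet x) = [x_1,0]\bigl[\begin{smallmatrix}0&\ell^\sigma\\ \ell&0\end{smallmatrix}\bigr](\text{after inserting }u)= [x_1\ell,0]$, recovering the original right $L$-action; and the induced $h$ gives $\lsk u\bullet x,y\rsk = \bigl[\begin{smallmatrix}0&-h(y_1,x_1)\\ h(x_1,y_1)&0\end{smallmatrix}\bigr]$, whose entry $h(x_1,y_1)\in L$ matches the original hermitian form under our identification (using $-h(y_1,x_1)=h(x_1,y_1)^\sigma$).

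The only mildly subtle step is the verification of \eqref{skew id}, which requires correctly tracking the $\sigma$ in the sesquilinearity identity and recognising $j^{\sigma t}\psi(x,y)j$ as $U_j\lsk x,y\rsk$; every other verification is a bounded matrix computation relying on $h(x_1,x_1)^\sigma = -h(x_1,x_1)$.
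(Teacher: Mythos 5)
Your proposal is correct and follows essentially the same route as the paper's proof: a direct matrix computation of $u\bullet\xt$, $v\bullet\xt$ and the relevant skew products, reducing \eqref{2evoorwaarde} to the anisotropy of $\pi$ via $h(x_1,x_1)^\sigma=-h(x_1,x_1)$ and verifying \eqref{1evoorwaarde} by showing both sides equal $[0,-x_1h(x_1,x_1)m^\sigma]$, with your identities $U_jj'=jj'j$ and $\psi(xj,yj)=j^{\sigma t}\psi(x,y)j$ merely making explicit the steps the paper dismisses as ``straightforward calculations.'' (Incidentally, your $x\cdot v=[x_1\ell,0]$ is the version consistent with $\cdot$ landing in $\Xt_0$; the paper's displayed $[0,x\ell]$ is a typo.)
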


\begin{proof}
Verifying that $\Xt$ is a special $J$-module and \eqref{skew id} requires some straightforward calculations. We will verify \eqref{1evoorwaarde} and \eqref{2evoorwaarde}.

Define $v=\begin{bmatrix}0&\ell^\si \\ \ell &0\end{bmatrix}\in J_\subhalf,\xt=[x,0]\in \Xt_0$. Notice that $u\bt \xt=[0,x]\in X_1$, and \[\lsk u\bt \xt,\xt\rsk=\begin{bmatrix}0&-h(x,x)\\ h(x,x) &0\end{bmatrix}.\] Hence $\lsk u\bt \xt,\xt\rsk$ is equal to $0$ if and only if $h(x,x)=0$. As we are working in a standard anisotropic pseudo-quadratic space, $\pi(x)=\half h(x,x)$ is anisotropic (see Remark \ref{charnot2}.2), so \eqref{2evoorwaarde}  holds.

 Condition \eqref{1evoorwaarde} holds since \[\lsk v\bt \xt,\xt\rsk \bt \xt=[0,-x h(x,x) l^\si]=v\bt [-x h(x,x),0 ]=v\bt( u\bt(\lsk u\bt \xt,\xt\rsk \bt \xt)).\]

From Theorem~\ref{constr quad alg} we conclude that $(k,J_\subhalf,Q,u,\Xt_0,\cdot ,\widetilde{h})$ is a quadrangular algebra with
\begin{align*}
&[x, 0]\cdot \begin{bmatrix}0&\ell^\si\\ \ell &0\end{bmatrix}=[0, x\ell],\\
&\widetilde{h}([x,0],[y,0])=\lsk u\bt[x,0],[y,0]\rsk= \begin{bmatrix}0&-h(y,x)\\ h(x,y) &0\end{bmatrix}.\tag*{\qedhere}
\end{align*}
\end{proof}

\subsection{Construction of quadrangular algebras of type $E_6,E_7, E_8$}\label{sec:E6E7E8}\label{sec34}

In this subsection we give a new construction of the vector spaces $X, L$ and the maps $\cdot$ and $h$ that we discussed in Section~\ref{se:e6e7e8}.

\subsubsection{A characterization of quadratic forms of type $E_6,E_7, E_8$}

We start by giving a new way to describe quadratic forms of type $E_6, E_7$ and $E_8$ (see  Definition \ref{def:e6e7e8}). The following illuminating observation was made by Skip Garibaldi. 

\begin{theorem}\label{th:e6e7e8}
Let $q$ be an anisotropic form over $k$ of dimension $6,8$ or $12$. Then $q$ is of type $E_6$, $E_7$ or $E_8$ respectively if and only if there exist an octonion division algebra $C_1$ and a division composition algebra $C_2$, of dimension $2,4$ or $8$ respectively, that have linkage number one such that $q$ is similar to the anisotropic part of the Albert form of $C_1\otimes_k C_2$. 
\end{theorem}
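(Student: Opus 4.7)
The plan is to carry out both directions by explicit Pfister-form computation, using Lemma~\ref{lem:wittindex} to convert the linkage hypothesis into a statement about the Witt index of $q_A$. Throughout, write $E = k(\gamma)$ with $\gamma^2 = a \in k \setminus (k^*)^2$, set $N := \llangle -a\rrangle$, and let $r \in \{1, 2, 3\}$ so that $\dim C_2 = 2^r$ and $\dim q = 4 + 2^r$. The core identity is this: whenever two Pfister forms share the factor $N$, say $q_i = N \otimes \psi_i$ with $\psi_i = \langle 1\rangle \perp \psi'_i$, then
\[
q_1 \perp -q_2 \;=\; N \otimes (\psi_1 \perp -\psi_2) \;=\; (N \perp -N) \perp N \otimes (\psi'_1 \perp -\psi'_2),
\]
and since $N \perp -N$ is hyperbolic and $q_1 \perp -q_2 = \langle 1, -1\rangle \perp q_A$ by definition of the Albert form, the anisotropic part of $q_A$ coincides with that of $N \otimes (\psi'_1 \perp -\psi'_2)$.

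For the \emph{if} direction, I write $q_1 = \llangle -a, -b, -c\rrangle$ so that $\psi'_1 = \langle -b, -c, bc\rangle$, and analogously $\psi'_2$ of dimension $2^{r-1} - 1 \in \{0, 1, 3\}$ with entries in terms of the defining parameters of $C_2$. Lemma~\ref{lem:wittindex} forces the Witt index of $q_A$ to equal one, so its anisotropic part has dimension exactly $4 + 2^r$, matching the prescribed dimension $6$, $8$ or $12$. Rescaling by $-b^{-1}$ normalizes the first coefficient to $1$ and yields a form of shape $N \otimes \langle 1, s_2, \ldots\rangle$. In the $E_7$ case, with $\psi'_2 = \langle -d\rangle$ and $C_2 = (a, d)_k$, the explicit formulas give $s_2 s_3 s_4 \equiv d \pmod{(k^*)^2}$; since $C_2$ is a division quaternion algebra iff $d \notin N(E)$, this yields the required $s_2 s_3 s_4 \notin N(E)$. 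In the $E_8$ case, with $\psi'_2 = \langle -d, -e, de\rangle$, direct computation gives $s_2 s_3 s_4 s_5 s_6 \equiv -(cde/b^2)^2 \pmod{(k^*)^2}$, so $-s_2 s_3 s_4 s_5 s_6$ is a square, hence lies in $N(E)$ automatically.

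For the \emph{only if} direction, given $q \cong N \otimes \langle 1, s_2, \ldots\rangle$, I invert the correspondence: take $b = -s_3/s_2$ and $c = -s_3$, and (for $E_7$ and $E_8$) $d = s_3 s_4/s_2$, $e = s_3 s_5/s_2$, adjusting representatives within the natural ambiguity of the $s_i$ if necessary so that the normalization $s_2 \cdots s_6 = -1$ holds exactly. This defines $C_1$ with norm $\llangle -a, -b, -c\rrangle$ and $C_2$ with the appropriate Pfister form. Division of $C_2$ is automatic for $E_6$ (where $C_2 = E$ is a field); for $E_7$ it follows from the hypothesis $s_2 s_3 s_4 \notin N(E)$, which gives $d \notin N(E)$; for $E_8$ it is forced by a dimension count, since a hyperbolic $q_2$ would yield $q_A \equiv q_1$ in the Witt ring and the anisotropic part of $q_A$ would have dimension at most $\dim q_1 = 8 < 12 = \dim q$, contradicting the anisotropy of $q$. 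Division of $C_1$ is symmetric: hyperbolicity of $q_1$ would force $\dim q_A^{\mathrm{anis}} \leq \dim q_2 < \dim q$. Once both algebras are division, the identity in the first paragraph identifies the anisotropic part of $q_A$ with a form similar to $q$, of dimension $4 + 2^r$, so the Witt index is exactly one and hence by Lemma~\ref{lem:wittindex} the linkage number equals one.

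The main obstacle is organizational rather than conceptual: the $s_i$'s are defined only up to the natural equivalence ($\langle s\rangle N \cong N$ for $s \in N(E^*)$, permutations of the diagonal entries, and overall rescaling), and the defining parameters $b, c, d, e$ of $C_1$ and $C_2$ must be chosen consistently with this ambiguity. The essential theoretical input, beyond this bookkeeping and the Pfister-form identity displayed above, is Lemma~\ref{lem:wittindex}, which provides the equivalence between linkage number one and Witt index one.
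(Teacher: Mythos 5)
Your proposal is correct and follows essentially the same route as the paper: both directions rest on the Witt-ring identity $q_1 \perp -q_2 \simeq \Hh \perp q_A \simeq 2\Hh \perp N\otimes(\psi'_1\perp -\psi'_2)$ combined with Lemma~\ref{lem:wittindex}, and your explicit parameters $(b,c,d,e)$ differ from the paper's choices $C_1=(a,-s_2,-s_3)_k$ and $C_2=(a,s_2s_3s_4)_k$ resp.\ $(a,-s_4s_6,-s_5s_6)_k$ only by squares. If anything, your verification that $C_2$ is division in the $E_7$ case (via $s_2s_3s_4\notin N(E)$, rather than a pure dimension count) is spelled out more explicitly than in the paper's corresponding step.
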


The `only if'-direction of this theorem is proved in the following, more technical, lemma.

\begin{lemma}\label{lem:e6e7e8}
We consider a quadratic form $q$ of type $E_6$, $E_7$ or $E_8$. Let $N$ denote the norm of a separable quadratic field extension $E=k(x)/(x^2-a)$ for $a\notin k^2$;
\begin{compactenum}[\rm (i)]
\item If $q=N\otimes \langle 1, s_2,s_3\rangle$ of type $E_6$, define \[C_1=(a,-s_2,-s_3)_k \text{ and } C_2=E.\]
\item If $q=N\otimes \langle 1, s_2,s_3,s_4\rangle$ of type $E_7$, define \[C_1=(a ,-s_2,-s_3)_k \text{ and } C_2=(a ,s_2s_3s_4)_k.\]
\item If $q=N\otimes \langle 1, s_2,s_3,s_4,s_5,s_6\rangle$ of type $E_8$, define 
\[C_1=(a ,-s_2,-s_3)_k\text{ and } C_2=(a ,-s_{4}s_{6},-s_{5}s_{6})_k.\]
\end{compactenum}
Then $q$ is similar to the anisotropic part of the Albert form of $C_1\otimes_kC_2$ and $C_1$ and $C_2$ are division algebras that have linkage number 1.
\end{lemma}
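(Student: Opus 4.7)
My plan is to address each case ($E_6$, $E_7$, $E_8$) separately, and in each to establish three points: that $q$ is similar to the anisotropic part of the Albert form $q_A$ of $C_1\otimes_k C_2$, that $C_1$ and $C_2$ are division algebras, and that their linkage number equals one. The computations live naturally in the Witt ring of $k$, in which $[q_A]=[q_1]-[q_2]$ (the Albert form has dimension $\dim C_1+\dim C_2-2$, while its Witt class equals $q_1\perp\langle-1\rangle q_2$ minus a hyperbolic plane arising from the two $\langle 1\rangle$-summands of the Pfister forms).

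For the similarity statement I would write out the Pfister norms $q_1,q_2$ from the formulas in the preliminaries and simplify $[q_1]-[q_2]$ by cancelling the common summand $\llangle -a\rrangle$. In the $E_6$ case this directly gives $\llangle -a\rrangle\otimes\langle s_2,s_3,s_2s_3\rangle$, which is similar to $q$ via rescaling by $s_2s_3$. The $E_7$ case adds a summand $\llangle -a\rrangle\otimes\langle s_2s_3s_4\rangle$ coming from $-[q_2]$, and the same rescaling recovers $q$. The $E_8$ case is the most delicate: one must exploit the normalization $s_2s_3s_4s_5s_6=-1$ to rewrite the slots $-s_4s_5,-s_4s_6,-s_5s_6$ produced by $-[q_2]$ as $s_6,s_5,s_4$ modulo squares, thereby identifying $s_2s_3\cdot[q_A]$ with $q$. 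In all three cases the resulting form has dimension $\dim q\in\{6,8,12\}$, so the Witt index of $q_A$ equals exactly one.

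The division-algebra property then follows by dimension comparison. Each $q_i$ is a Pfister form, hence anisotropic or hyperbolic. If $q_1$ were hyperbolic, $[q_A]=-[q_2]$ would have dimension at most $\dim q_2\in\{2,4,8\}<\dim q\in\{6,8,12\}$, contradicting the previous step. Anisotropy of $q_2$ is automatic for $E_6$ (since $a\notin k^2$), for $E_7$ follows from the standard Pfister isotropy criterion applied to $s_2s_3s_4\notin N(E)$, and for $E_8$ follows again by the same dimension-counting argument. The linkage claim is then immediate: both $q_1,q_2$ visibly contain $\llangle -a\rrangle$ by construction, so they are $1$-linked, and since the Witt index of $q_A$ is exactly one, Lemma~\ref{lem:wittindex} delivers linkage number one. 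I expect the main obstacle to be the $E_8$ Witt-ring identity, where $s_2s_3s_4s_5s_6=-1$ must be applied carefully together with $\langle c\rangle=\langle cd^2\rangle$ in order to match the diagonal of $s_2s_3\cdot[q_A]$ with that of $q$.
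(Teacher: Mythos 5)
Your proposal is correct and follows essentially the same route as the paper: both compute the Witt class of $q_A$ as $[q_1]-[q_2]$, rescale by $s_2s_3$ (using $s_2s_3s_4s_5s_6=-1$ in the $E_8$ case) to identify the anisotropic part with $q$, deduce that $C_1,C_2$ are division algebras from the resulting Witt-index count, and invoke Lemma~\ref{lem:wittindex} for the linkage number. If anything, your treatment of the anisotropy of $q_2$ in the $E_7$ case (via $s_2s_3s_4\notin N(E)$, where a pure dimension count would not suffice) is slightly more careful than the paper's.
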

\begin{proof}
Denote the norm form of $C_1$ by $q_1$, the norm form of $C_2$ by $q_2$ and the Albert form of $C_1\otimes_k C_2$ by $q_A$. In the case that $q$ is of type $E_8$ we will verify that
\begin{align}\label{id:albertform}q\perp 2\Hh\sim q_A\perp \Hh\sim q_1\perp -q_2,\end{align}
the other two cases are similar. We have $q_1=N\otimes \llangle s_2,s_3\rrangle$ and $q_2= N\otimes \llangle s_{4}s_{6},s_{5}s_{6}\rrangle$. Since $t\Hh\simeq\Hh$ for $t\in k$ it follows that 
\begin{align}\label{constr type E8}
s_2s_3(\langle 1,s_2,s_3,s_4,s_5,s_6\rangle\perp \Hh)\notag&\simeq s_{2}s_{3} \langle s_3,s_2,1,s_5,s_4,s_6\rangle\perp \Hh\notag\\
&\simeq\langle s_2,s_3,s_2s_3,s_2s_3s_5,s_2s_3s_4,s_2s_3s_6\rangle\perp\Hh\notag\\
&\simeq\langle s_2,s_3,s_{2}s_{3},-s_{4}s_{6},-s_{5}s_{6},-s_{4}s_{5}\rangle\perp \langle 1,-1\rangle\notag\\
&\simeq\llangle s_2,s_3\rrangle \perp- \llangle s_{4}s_{6},s_{5}s_{6}\rrangle.
\end{align}
By multiplying the above identity with $N$ we obtain \eqref{id:albertform}.

Note that $q$ is anisotropic. Therefore $q_1\perp - q_2$ has Witt index 2; it follows that $q_1$ and $q_2$ are anisotropic and both $C_1$ and $C_2$ are  division algebras. It follows from \eqref{id:albertform} that $q_A$ has Witt index 1, and now Lemma \ref{lem:wittindex} implies that $C_1$ and $C_2$ have linkage number 1.
\end{proof}

\begin{proof}[Proof of Theorem \ref{th:e6e7e8}] The `only if'-direction is proven in the Lemma above. The `if'-direction follows in a similar way. We elaborate the case where $C_1$ and $C_2$ are octonion division algebras.

Since $C_1$ and $C_2$ contain an isomorphic field extension, by \cite[Prop. 1.5.1]{SV} we can assume that $C_1=(a,b_1,c_1)_k$ and $C_2=(a,b_2,c_2)_k$ for some $a,b_1,b_2,c_1,c_2\in k$. We denote the Albert form of $C_1\otimes_k C_2$ by $q_A$.

Define $N:=\llangle -a\rrangle$, this is anisotropic since $C_1$ is division. By going through \eqref{constr type E8} from bottom to top with \[s_2:=-b_1,s_3:=-c_1, s_4:=\frac{1}{b_1c_1c_2},s_5:=\frac{1}{b_1c_1b_2}, s_6:=-b_1b_2c_1c_2\] we find that $q_A$ is similar to $N\otimes \langle 1,s_2,s_3,s_4,s_5,s_6\rangle\perp \Hh$. Since the Witt index of $q_A$ is one, $N\otimes \langle 1,s_2,s_3,s_4,s_5,s_6\rangle$ is the anisotropic part of $q_A$; since $s_2s_3s_4s_5s_6=-1$ it is of type $E_8$.
\end{proof}

\subsubsection{The construction}

In order to construct quadrangular algebras of type $E_6, E_7$ and $E_8$ we follow Example 6.82 in \cite{ABG} closely. In {\em loc.\@ cit.\@} a $J$-ternary algebra is constructed out of $C_1\otimes_k C_2$ in \chacha\ zero, but this restriction is not necessary. 

First we give a motivation of the approach we will be following in our construction.

\begin{remark} Let $C_1$ be an octonion division algebra and $C_2$ a separable quadratic field extension, quaternion division algebra or octonion division algebra and assume that $C_1$ and $C_2$ have linkage number one. 

The dimension of $C_1\otimes_kC_2$ is $16$, $32$ or $64$, respectively. The space of skew elements is $\Ss=S_1\otimes 1+ 1\otimes S_2$ and has dimension $8$, $10$ or $12$, respectively. Let $(k,L,q,1,\widetilde{X},\cdot,h)$ be a quadrangular algebra of type $E_6$, $E_7$ or $E_8$, respectively. We summarize some dimensions:
\[
\begin{array}{c|c|c|c|}
& E_6 & E_7 & E_8 \\
 \hline
 \dim_k\Ss &8&10&14\\
 \dim_k L & 6&8&12\\
 \hline
\dim_k( C_1\otimes_kC_2 )&16&32&64\\
\dim_k \widetilde{X} & 8&16&32
\end{array}\]

We see that in all three cases $\dim_k\Ss=\dim_k L+2$ and $\dim_k(C_1\otimes C_2)=2\dim_k \widetilde{X}$. 

In Theorem~\ref{constr quad alg} we considered some objects the dimensions of which behave similarly: Let $J$ be a Jordan algebra of reduced spin type with base point and let $X$ be a special $J$-module. Then $\dim_k(J)=\dim J_\subhalf +2$ and $\dim_k X=2 \dim_k X_0$. 

From Lemma \ref{lem:e6e7e8}, the Albert form from $\Ss$ to $k$ can be written as the sum of a hyperbolic plane and a quadratic form of type $E_6, E_7$ or $E_8$, respectively. Note that a hyperbolic plane is two-dimensional.

In the following pages, we will give $\Ss$ the structure of a reduced spin factor of a quadratic form of type $E_6$, $E_7$ or $E_8$, respectively, and identify $J_\subhalf$ with $L$. Then we will give $C_1\otimes C_2$ the structure of a special $J$-module equipped with a bilinear skew-hermitian form, and identify $(C_1\otimes C_2)_0$ with $\widetilde{X}$.
\end{remark}
We start by fixing some notation.

\begin{notation}\begin{compactenum}[(\rm i)]
\item We fix a basis for the composition algebras $C_1$ and $C_2$ that have linkage number 1. We let $C_1$ be the octonion division algebra
\[C_1=\langle 1, \ii_1,\jj_1,\ii_1\jj_1,\kk_1,\ii_1\kk_1, \jj_1\kk_1,(\ii_1\jj_1)\kk_1\rangle.\]
If $C_2$ is a separable quadratic field extension, we define $C_2=\langle 1, \ii_2\rangle$. In the case $C_2$ is a quaternion division algebra we define $C_2=\langle 1, \ii_2,\jj_2,\ii_2\jj_2\rangle$. In the case $C_2$ is an octonion division algebra we define
 \[C_2=\langle 1, \ii_2,\jj_2,\ii_2\jj_2,\kk_2,\ii_2\kk_2, \jj_2\kk_2,(\ii_2\jj_2)\kk_2\rangle.\]
Since $C_1$ and $C_2$ have linkage number 1, we can choose these bases in such a way that \[\ii_1^2=\ii_2^2=:a\in K.\]

\item From now on we denote $\Ss_1$, $\Ss_2$ and  $\Ss$ for the set of skew elements of  $C_1$, $C_2$ and $C_1\otimes_k C_2$, respectively.

\item We denote the Albert form of $C_1\otimes_k C_2$ by $q_A:\Ss\rightarrow k$ and its associated bilinear form by $f_A$.
\item Let $V:=\langle \ii_1\otimes 1, 1\otimes \ii_2\rangle^\perp$ denote the orthogonal complement of the subspace $\langle \ii_1\otimes 1, 1\otimes \ii_2\rangle$ of $\Ss$ with respect to the non-degenerate bilinear form $f_A$. 
\end{compactenum}
\end{notation}

We want to make $\Ss$ into a Jordan algebra of reduced spin type. In particular it should contain supplementary proper idempotents $e_0$ and $e_1$ and an element $u\in J_\subhalf$ such that $u^2$ is the identity.
It will become clear that the elements constructed in the following lemma will be the ones we need.

\begin{lemma}\label{lem:e0e1u}
Let $u\in V\setminus\{0\}$ be arbitrary. Then up to scalars, there exists a unique pair $(e_0,e_1)$ of elements in $\Ss$ such that 
\begin{align*} &q_A(e_0)= q_A(e_1)=0,& \\&f_A(e_0,V)= f_A(e_1,V)=0,&\hspace{-1.5cm} f_A(e_0,e_1)=-q_A(u)\neq 0.  
  \end{align*}	
Explicitly, there exists an element $\la\in k$ such that $(\la e_0,\la^{-1}e_1)$ is equal to
\[\left(\ii_1\otimes 1 + 1\otimes \ii_2, \frac{q_A(u)}{4a}(\ii_1\otimes 1 - 1\otimes \ii_2)\right).\]
\end{lemma}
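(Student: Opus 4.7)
The plan is to use the orthogonality condition $f_A(e_i,V)=0$ to confine the search to the two-dimensional complement of $V$ in $\Ss$, after which the remaining constraints reduce to a pair of scalar equations.

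First I would identify $V^\perp$ explicitly. Since $\ii_1,\ii_2$ are skew with $\ii_i^2=a$, Lemma~\ref{identities octonion}(i) gives $q_i(\ii_i)=-a$, so on the subspace $W:=\langle \ii_1\otimes 1,\,1\otimes\ii_2\rangle$ we have $q_A(\ii_1\otimes 1)=-a$, $q_A(1\otimes\ii_2)=a$, and $f_A(\ii_1\otimes 1,\,1\otimes\ii_2)=0$. Thus $f_A|_W$ has Gram matrix $\mathrm{diag}(-2a,2a)$, which is non-degenerate since $a\neq 0$. Consequently $\Ss=V\perp W$, so the condition $f_A(e_i,V)=0$ is equivalent to $e_i\in W$. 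As a byproduct, $W$ is a hyperbolic plane for $q_A$; since $q_A$ has Witt index one by Lemma~\ref{lem:wittindex}, its restriction to $V$ is anisotropic, giving the asserted $q_A(u)\neq 0$ for every nonzero $u\in V$.

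Next, write $e_i=\alpha_i\,\ii_1\otimes 1+\beta_i\,1\otimes\ii_2$. The isotropy condition $q_A(e_i)=0$ becomes $a(\beta_i^2-\alpha_i^2)=0$, forcing $\beta_i=\pm\alpha_i$. Computing
\[ f_A(e_0,e_1)=2a(\beta_0\beta_1-\alpha_0\alpha_1) \]
shows that the two signs must differ, since otherwise $f_A(e_0,e_1)=0$ contradicts $f_A(e_0,e_1)=-q_A(u)\neq 0$. Up to interchanging $e_0$ and $e_1$ we may fix $\beta_0=\alpha_0$ and $\beta_1=-\alpha_1$, so
\[ e_0=\alpha_0(\ii_1\otimes 1+1\otimes\ii_2),\qquad e_1=\alpha_1(\ii_1\otimes 1-1\otimes\ii_2), \]
and the remaining condition $f_A(e_0,e_1)=-4a\,\alpha_0\alpha_1=-q_A(u)$ pins down the product $\alpha_0\alpha_1=q_A(u)/(4a)$.

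Finally, the only remaining freedom is the rescaling $(\alpha_0,\alpha_1)\mapsto(\lambda\alpha_0,\lambda^{-1}\alpha_1)$, which proves uniqueness up to the scalar $\lambda$ claimed in the statement. Taking $\lambda=1/\alpha_0$ normalizes the pair to $(\ii_1\otimes 1+1\otimes\ii_2,\;\tfrac{q_A(u)}{4a}(\ii_1\otimes 1-1\otimes\ii_2))$, matching the explicit formula. There is no real obstacle here: the whole argument is essentially linear algebra on the two-dimensional space $W$; the one observation that does the work is that $V^\perp$ is a hyperbolic plane cut out by the natural basis vectors $\ii_1\otimes 1$ and $1\otimes\ii_2$.
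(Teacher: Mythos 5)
Your proof is correct and follows essentially the same route as the paper's: reduce to the two-dimensional space $\langle \ii_1\otimes 1, 1\otimes\ii_2\rangle = V^\perp$, observe that the isotropic vectors there are the multiples of $\ii_1\otimes 1 \pm 1\otimes\ii_2$, note that the two signs must differ for $f_A(e_0,e_1)\neq 0$, and solve for the scalars. The only difference is that you spell out the Gram matrix computation and the sign argument, which the paper leaves implicit.
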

\begin{proof}
Since $ q_A$ has Witt index one, $q_A$ is anisotropic on $V=\langle \ii_1\otimes 1, 1\otimes \ii_2\rangle^\perp$. Hence $q_A(u)\neq 0$.

We demand that $e_0,e_1$ are isotropic elements in $V^\perp=\langle \ii_1\otimes 1, 1\otimes \ii_2\rangle$. This implies that they are of the form $\la(\ii_1\otimes 1 \pm 1\otimes \ii_2)$. 
Since $ f_A(e_0,e_1)$ should be different from $0$ we can take without loss of generality $e_0=\la_0(\ii_1\otimes 1 + 1\otimes \ii_2)$ and $e_1=\la_1(\ii_1\otimes 1 - 1\otimes \ii_2)$ for some $\la_0,\la_1\in k\setminus\{0\}$. 
Now we determine the scalars $\la_0,\la_1$, such that $ f_A(e_0,e_1)=-q_A(u)$. We have \[f_A(\la_0(\ii_1\otimes 1 + 1\otimes \ii_2),\la_1(\ii_1\otimes 1 - 1\otimes \ii_2))=\la_0\la_1(-4a).\]
So we find that $\la_1=\frac{q_A(u)}{4a \la_0}$.
\end{proof}

Since $\dim\Ss=\dim V+2$, we want to make $V$ into a quadratic space.
If we want that $u.u=1$ in the Jordan algebra of reduced spin type we will define, the element $u$ should be the base point of the quadratic form that determines the reduced spin factor. In the following definition we define a Jordan algebra on $\Ss$; in Lemma \ref{lem:LJ} we will show that this Jordan algebra has a natural interpretation in the endomorphism ring of $C_1\otimes_k C_2$.

\begin{definition}\label{def:Je0e1u} Let $u\in V\setminus\{0\}$ and 
\[e_0=\ii_1\otimes 1 + 1\otimes \ii_2, \quad e_1=\frac{q_A(u)}{4a}(\ii_1\otimes 1 - 1\otimes \ii_2).\]
\begin{compactenum}[(\rm i)]
\item We define a quadratic form on the vector space $V$, \[Q:=\frac{1}{q_A(u)}q_A|_V.\] We denote the corresponding bilinear form by $F$.

It follows from Theorem \ref{th:e6e7e8} that $(k,V,Q)$ is a quadratic space of type $E_6,E_7$ or $E_8$, respectively, with base point $u$.
\item\label{def:Je0e1u nr} We have $\Ss=ke_0\oplus V\oplus k e_1$, we define the Jordan multiplication as in Definition \ref{def: redspin}:
\begin{align*}
	(t_1e_i).(t_2 e_j) &= \delta_{ij}t_1t_2 e_i, \\
	(t e_i).v &= \half t v, \\
	v .w &= \half F(v,w)(e_0+e_1),
\end{align*}
for all $i,j \in \{0,1\}, v,w \in V, t,t_1,t_2\in k$. We denote this Jordan algebra by $J$, this is the reduced spin factor of $(k,V,Q)$.

\item\label{defr} We define 
\[r:=(e_0+e_1)^{-1}=-\frac{1}{ q_A(e_0+e_1)}(e_0+e_1)^\natural\in \Ss,\]
where the inverse and $\natural$ is as in Definition \ref{def:invsharp}. Notice that $e_0+e_1$ is the identity in the Jordan algebra $J$ on $\Ss$, the definition of $r$ has nothing to do with the inverse in $J$.
\item Let $s\in\Ss$, define $L_s\in \End_k(C_1\otimes_kC_2)$ as $L_sx:=sx$ for all $x\in C_1\otimes_kC_2$.
\end{compactenum}
\end{definition}

We will consider the Jordan algebra of the associative algebra $\End_k(C_1\otimes_k~C_2)$, denoted by $\End_k(C_1\otimes_k~C_2)^+$. 
We show that the algebra of reduced spin type we defined above, is isomorphic to a Jordan subalgebra of $\End_k(C_1\otimes_k~C_2)^+$. 

\begin{lemma}[{\cite[Example 6.82]{ABG}}]\label{lem:LJ}
Let $s_1,s_2\in \Ss$, we have
\[\frac{1}{2}(L_{s_1}L_rL_{s_2}L_r+L_{s_2}L_rL_{s_1}L_r)=L_{s_1. s_2}L_r,\]
 where $s_1. s_2$ denotes the multiplication in the algebra $J$ defined in Definition \ref{def:Je0e1u}.\eqref{def:Je0e1u nr}.

Therefore $L_\Ss L_r$ is a Jordan subalgebra of $\End_k(C_1\otimes_k C_2)^+$ isomorphic to $J$.
\end{lemma}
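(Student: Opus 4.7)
My plan is to rewrite the Jordan product $(L_{s_1}L_r)\cdot(L_{s_2}L_r)$ in $\End_k(C_1\otimes_k C_2)^+$ as a single operator of the form $L_p L_r$ for an explicit $p\in\Ss$, and then identify $p$ with the Jordan product $s_1. s_2$ in the reduced spin factor $J$. The principal tool is Lemma~\ref{ident bioctonion}(ii) applied to the triple $(s,r,s)$ with $s\in\Ss$: it yields the operator identity $L_sL_rL_s=L_{srs}$, where by Lemma~\ref{ident bioctonion}(i) the product $srs$ is unambiguously bracketed.

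\textbf{Paragraph 2.} Linearizing in $s$ (substitute $s_1+s_2$ and collect the mixed coefficient) gives the operator identity
\[ L_{s_1}L_rL_{s_2}+L_{s_2}L_rL_{s_1}=L_{s_1(rs_2)+s_2(rs_1)} . \]
Post-composing with $L_r$ and dividing by two, and observing that $L_r$ is invertible because $L_{e_0+e_1}L_r=\id$ by Lemma~\ref{ident bioctonion}(iii), the statement of the lemma reduces to the purely algebraic identity
\[ \half\bigl(s_1(rs_2)+s_2(rs_1)\bigr)=s_1. s_2 \quad\text{in } C_1\otimes_k C_2 \]
for all $s_1,s_2\in\Ss$, where the right-hand side is the Jordan multiplication of $J$ from Definition~\ref{def:Je0e1u}\eqref{def:Je0e1u nr}.

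\textbf{Paragraph 3.} Both sides of this reduced identity are $k$-bilinear and symmetric in $s_1,s_2$, so it suffices to verify it on pairs drawn from the decomposition $\Ss=ke_0\oplus V\oplus ke_1$. Evaluating the equality $L_rL_{e_0+e_1}=\id$ (which follows from $L_{e_0+e_1}L_r=\id$ by finite-dimensional invertibility) at $1$ gives $r(e_0+e_1)=1$ in $C_1\otimes_k C_2$; together with $L_{e_0+e_1}L_r=\id$ this immediately handles every case in which one of $s_1,s_2$ lies on the diagonal $e_0+e_1$. The remaining cases are: $e_i(re_i)=e_i$ and $e_0(re_1)+e_1(re_0)=0$ (encoding $e_i.e_i=e_i$ and $e_0.e_1=0$), $\half(e_i(rv)+v(re_i))=\half v$ for $v\in V$ and $i\in\{0,1\}$ (the $J_\subhalf$-action), and the core identity
\[ \half\bigl(v(rw)+w(rv)\bigr)=\half F(v,w)(e_0+e_1)\quad\text{for all }v,w\in V. \]

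\textbf{Paragraph 4.} The main obstacle is the last case, because $C_1\otimes_k C_2$ is neither alternative nor associative, so every rearrangement must be justified through the Moufang-type identities of Lemma~\ref{ident bioctonion}. The leverage is that $r\in\langle\ii_1\otimes 1,1\otimes\ii_2\rangle=\langle e_0,e_1\rangle$ by Definition~\ref{def:Je0e1u}\eqref{defr} and Lemma~\ref{lem:e0e1u}, while $V$ is orthogonal to $\langle e_0,e_1\rangle$ in $f_A$; these orthogonality relations, unwound componentwise into $f_1(v_1,\ii_1)=0$ and $f_2(v_2,\ii_2)=0$ for $v=v_1\otimes 1+1\otimes v_2\in V$ and applied through Lemma~\ref{identities octonion}, force $v(rw)+w(rv)$ to lie in $\langle e_0,e_1\rangle$, and the scalar is then pinned down by pairing against $e_0,e_1$ with $f_A$. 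Once the bilinear identity is established, the concluding statement is immediate: the map $J\to\End_k(C_1\otimes_k C_2)^+ : s\mapsto L_sL_r$ is $k$-linear, preserves the Jordan product by what was just proved, and is injective because $L_sL_r=0$ forces $L_s=L_sL_rL_{e_0+e_1}=0$ and hence $s=0$; its image $L_\Ss L_r$ is therefore a Jordan subalgebra of $\End_k(C_1\otimes_k C_2)^+$ isomorphic to $J$.
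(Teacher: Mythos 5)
Your proposal is correct, but it takes a genuinely different route from the paper. The paper never passes through the element-level identity $\half\bigl(s_1(rs_2)+s_2(rs_1)\bigr)=s_1. s_2$: instead it invokes the linearization of Allison's operator identity \cite[Prop.~3.3~(3.8)]{A}, which expresses $L_{s_1}L_{(e_0+e_1)^\natural}L_{s_2}+L_{s_2}L_{(e_0+e_1)^\natural}L_{s_1}$ directly as $-f_A(s_1,e_0+e_1)L_{s_2}-f_A(s_2,e_0+e_1)L_{s_1}+f_A(s_1,s_2)L_{e_0+e_1}$, and then reads off the three Peirce cases from $q_A(e_i)=0$, $f_A(e_0,e_1)=-q_A(u)$ and $f_A(e_i,V)=0$. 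Your route instead uses the paper's own Lemma~\ref{ident bioctonion}(ii) to get $L_sL_rL_s=L_{s(rs)}$, linearizes, and cancels $L_r$; this buys self-containedness (no black-box appeal to Allison) at the price of having to compute $s_1(rs_2)+s_2(rs_1)$ explicitly, case by case over $\Ss=ke_0\oplus V\oplus ke_1$. Those computations do go through by the means you indicate, so the strategy is sound; the one place where your write-up is thinner than it should be is the core case $v,w\in V$. There, writing $r=A\,\ii_1\otimes 1+B\,1\otimes\ii_2$, the cross terms cancel by $v_i\perp\ii_i$ and $v_iw_i+w_iv_i=-f_i(v_i,w_i)$, but the surviving outer terms require the identity $v_1(\ii_1w_1)+w_1(\ii_1v_1)=f_1(v_1,w_1)\,\ii_1$ (obtained by linearizing $v_1(\ii_1v_1)=(v_1\ii_1)v_1=-\ii_1v_1^2=q_1(v_1)\ii_1$ via flexibility and alternativity); only after this is the element known to lie in $\langle e_0,e_1\rangle\subseteq\Ss$, and your suggestion to ``pin down the scalar by pairing against $e_0,e_1$ with $f_A$'' presupposes exactly that containment, since a product of skew elements of $C_1\otimes_k C_2$ is not a priori skew. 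With that step made explicit, your argument is a complete and arguably more elementary proof of the lemma.
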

\begin{proof}
We will make use of \cite[Proposition 3.3 (3.8)]{A}. In \cite{A} only characteristic 0 is considered; however this proposition can be generalized to characteristic different from 2 without any adjustments. The proof of this proposition uses basic identities of octonions (see Lemma \ref{identities octonion}) and the identity $s_1(s_2(s_1x))=~(s_1s_2s_1)x$ for $x\in C_1\otimes_kC_2$ (see Lemma \ref{ident bioctonion}).

Linearizing \cite[Prop 3.3 (3.8)]{A} gives
\begin{multline*}
	L_{s_1}L_{(e_0+e_1)^\natural}L_{s_2}+L_{s_2}L_{(e_0+e_1)^\natural} L_{s_1} \\
	= - f_A({s_1},e_0+e_1)L_{s_2}- f_A({s_2},e_0+e_1)L_{s_1}+ f_A({s_1},{s_2})L_{e_0+e_1}.
\end{multline*}

Since $r=(e_0+e_1)^{-1}=-\frac{1}{ q_A(e_0+e_1)}(e_0+e_1)^\natural=\frac{1}{ q_A(u)}(e_0+e_1)^\natural$, we find that 
\begin{multline*}
\frac{1}{2}(L_{s_1}L_rL_{s_2}L_r+L_{s_2}L_rL_{s_1}L_r)\\
=\frac{1}{2q_A(u)}(- f_A({s_1},e_0+e_1)L_{s_2}L_r- f_A({s_2},e_0+e_1)L_{s_1}L_r+ f_A({s_1},{s_2})L_{e_0+e_1}L_r).
\end{multline*}

 It follows from $u\in V=\langle e_0,e_1\rangle ^\perp$, $ q_A(e_0)= q_A(e_1)=0,  f_A(e_0,e_1)=- q_A(u)$, that for $i,j\in \{0,1\}$ and for all $v,w\in V$ 
\begin{align*}
\half(L_{e_i}L_rL_{e_j}L_r+L_{e_j}L_rL_{e_i}L_r)&=\delta_{ij}L_{e_i}L_r,\\
\half(L_{e_i}L_rL_{v}L_r+L_{v}L_rL_{e_i}L_r)&=\half L_{v}L_r,\\
\half(L_{v}L_rL_{w}L_r+L_{w}L_rL_vL_r)&=\frac{ f_A(v,w)}{2 q_A(u)}L_{e_0+e_1}L_r.
\end{align*}
This is exactly the multiplication of $J$.
\end{proof}

In order to define an action of $J$ on $C_1\otimes_k C_2$, we use the isomorphism of the previous Lemma.

\begin{definition}\label{def:bil}
We define the bilinear action
\[\bt: \Ss\times (C_1\otimes_k C_2)\rightarrow C_1\otimes_k C_2:(s,x)\mapsto L_sL_rx=s(rx).\]

We define the {\em skew symmetric bilinear map} \[\lsk.,.\rsk:(C_1\otimes_k C_2) \times (C_1\otimes_k C_2)\rightarrow \Ss:(x,y)\mapsto x\overline{y}-y\overline{x}.\]
\end{definition}

\begin{remark}\label{expl form X0}
\begin{compactenum}[(\rm i)]
\item After some computation we find 
\begin{align*} e_0\bt \left( x_1\otimes x_2\right)&=\half\left(x_1\otimes x_2+\frac{1}{a} \ii_1x_1\otimes \ii_2 x_2\right),\\
 e_1\bt \left(x_1\otimes x_2\right)&=\half\left(x_1\otimes x_2-\frac{1}{a}\ii_1x_1\otimes \ii_2x_2\right),
 \end{align*}
 for all $x_1\in C_1,x_2\in C_2$. Note that this is independent of the choice of the base point $u$.

\item\label{ii} Let $C$ be a composition algebra and define the bilinear skew symmetric map \[\psi:C\times C\rightarrow\Ss: (x,y)\mapsto x\overline{y}-y\overline{x}.\] Then for all $x_1, y_1\in C_1,x_2,y_2\in C_2$ we have,
\[\lsk x_1\otimes x_2,y_1\otimes y_2\rsk= f_2(x_2,y_2)\psi(x_1,y_1)\otimes 1+1\otimes f_1(x_1,y_1)\psi(x_2,y_2).\]

\end{compactenum}

\end{remark}

In the following theorem we show that the construction given in the introduction does indeed give rise to a quadrangular algebra of type $E_6,E_7$ or $E_8$. In the proof we make a distinction between the cases $\cha(k)\neq2$ and $\cha(k)\neq2,3$. When $\cha(k)\neq2,3$, $C_1\otimes_kC_2$ is a structurable algebra and we can use the theory of structurable algebras. 

If $\cha(k)=3$ we can not make use of the theory of structurable algebras; therefore we prove this in a direct way only making use of identities in octonions. Regrettably, this gives rise to lengthy computations and for one particular identity we had to rely on the computer algebra software \cite{sage}. This proof does not use the fact that the \chacha\ is equal to 3, but only that it is different from 2.

\begin{theorem}\label{th:E6E7E8} Let $\Char(k)\neq 2$.
Let $e_0,e_1,u\in \Ss$ be as in Lemma \ref{lem:e0e1u}, let the quadratic form $Q$ of type $E_6,E_7,E_8$ and the reduced spin factor $J$ be as in Definition \ref{def:Je0e1u}. Let  $X:=C_1\otimes_k C_2$, let $\bt$ and $\lsk.,.\rsk$ be defined as above. 

Then $X$ is a special $J$-module and  $\lsk. ,.\rsk$ satisfies each side of \eqref{skew id}. Conditions \eqref{1evoorwaarde} and \eqref{2evoorwaarde} of Theorem \ref{constr quad alg} are satisfied. 
As in Theorem \ref{constr quad alg} we define 
\begin{align*}
\cdot&:X_0\times V\rightarrow X_0: x\cdot v=v\bt(u\bt x)\\
h&:X_0\times X_0\rightarrow V:(x,y)\mapsto\lsk u\bt x,y\rsk.
\end{align*}

Then $(k,V,Q,u,X_0,\cdot ,h)$ is a quadrangular algebra of type $E_6,E_7,E_8$.
\end{theorem}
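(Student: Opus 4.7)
The overall strategy is to verify the hypotheses of Theorem \ref{constr quad alg} for $X = C_1 \otimes_k C_2$ with the action $\bt$ and the skew form $\lsk\cdot,\cdot\rsk$ of Definition \ref{def:bil}; the quadrangular-algebra structure then drops out automatically. The type identification is handled separately at the end: by Definition \ref{def:Je0e1u} and Theorem \ref{th:e6e7e8}, $(k,V,Q)$ is a quadratic space of type $E_6$, $E_7$ or $E_8$ with $\dim_k V \in \{6,8,12\}$. Such a quadratic space cannot arise from a quadratic pair $(L,\si)$, since the corresponding quadratic form has dimension at most $4$; by the dichotomy of Theorem \ref{th:class quad alg}, the quadrangular algebra we construct must therefore be of type $E_6$, $E_7$ or $E_8$ as claimed.

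First I would check that $X$ is a special $J$-module. Bilinearity is immediate and the unit axiom $(e_0+e_1)\bt x = x$ uses that $e_0+e_1$ is skew and $r = (e_0+e_1)^{-1}$ in the sense of Definition \ref{def:invsharp}, so Lemma \ref{ident bioctonion}(iii) gives $(e_0+e_1)(rx) = x$. The Jordan identity (v') of Lemma \ref{lem: module} is exactly the content of Lemma \ref{lem:LJ}: the assignment $s \mapsto L_s L_r$ is a Jordan homomorphism $J \to \End_k(X)^+$, which is precisely what it means to equip $X$ with a special $J$-module structure. Condition \eqref{skew id} for $\lsk x,y\rsk = x\bar y - y\bar x$ is then verified via Lemma \ref{lem: skewsym}; by bilinearity and the Peirce decomposition it suffices to check $U_j \lsk x,y\rsk = \lsk j\bt x, j\bt y\rsk$ for $j \in \{e_0,e_1\}$ and $j \in V$, and each case reduces to a direct manipulation of $s(rx)$ and $x\bar y$ using Lemma \ref{identities octonion} and Lemma \ref{ident bioctonion}.

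Condition \eqref{2evoorwaarde} follows from an explicit computation: using Remark \ref{expl form X0} every $x \in X_0$ is a sum of elements of the shape $x_1\otimes x_2 + \tfrac{1}{a}\ii_1 x_1 \otimes \ii_2 x_2$, and expanding $\lsk u\bt x, x\rsk = (u\bt x)\overline{x} - x\overline{(u\bt x)}$ produces an element of $V$ whose value under $q_A$ is controlled by the norm forms of $C_1$ and $C_2$. Since both $C_1$ and $C_2$ are division composition algebras, those norms are anisotropic on $S_1$ and $S_2$, forcing the result to be nonzero whenever $x \neq 0$.

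The principal obstacle is \eqref{1evoorwaarde}. In \chacha\ different from $2$ and $3$, the tensor product $C_1\otimes_k C_2$ is a structurable algebra and carries a natural triple product making it into a $J$-ternary algebra over $J$, following \cite[Example 6.82]{ABG}; Theorem \ref{th: Jtern to quadr} then yields \eqref{1evoorwaarde} without further work. In characteristic $3$ the structurable and $J$-ternary frameworks are unavailable, so \eqref{1evoorwaarde} must be proved by hand. The plan there is to exploit bilinearity in $v$ together with the orthogonal splitting $V = ku \oplus u^\perp$ to reduce to $v \in u^\perp$ (so that $uv + vu = 0$), and then to expand both sides of \eqref{1evoorwaarde} using the Moufang identities of Lemma \ref{identities octonion} and the skew identities of Lemma \ref{ident bioctonion}. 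This reduction is mechanical but long, and the argument will likely require a symbolic computation to close out one of the resulting octonionic identities.
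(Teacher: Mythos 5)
Your outline matches the paper on most points: the special $J$-module structure via Lemma \ref{lem:LJ}, the verification of \eqref{skew id} by direct octonionic manipulation, the structurable/$J$-ternary route to \eqref{1evoorwaarde} when $\cha(k)\neq 2,3$ with a symbolic computation filling the gap in characteristic $3$, and the dimension argument for the type identification. However, there is a genuine gap in your treatment of \eqref{2evoorwaarde}. The map $x\mapsto \lsk u\bt x,x\rsk$ is \emph{quadratic} in $x$, so writing a general $x\in X_0$ as a sum of elements of the shape $e_0\bt(x_1\otimes x_2)=\tfrac12\bigl(x_1\otimes x_2+\tfrac1a\,\ii_1x_1\otimes\ii_2x_2\bigr)$ produces cross terms that are not controlled by the norm forms of $C_1$ and $C_2$. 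The clean formula
\[
\lsk u\bt x,x\rsk=\tfrac{1}{4a}\bigl(q_2(x_2)\psi(s_1x_1,\ii_1x_1)\otimes 1+1\otimes q_1(x_1)\psi(s_2x_2,\ii_2x_2)\bigr)
\]
holds only for the decomposable elements $x=e_0\bt(x_1\otimes x_2)$, and these do not exhaust $X_0$ (for type $E_7$ and $E_8$ they form a subvariety of dimension well below $\dim X_0=16$ or $32$). So anisotropy of $q_1$ and $q_2$ only yields the \emph{existence} of some $x\in X_0$ with $\lsk u\bt x,x\rsk\neq 0$; it does not give nonvanishing for all $x\neq 0$.

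The paper closes this gap with a structurally different argument, adapted from \cite[Theorem 13.47]{TW}: assuming $\lsk u\bt x,x\rsk=0$ for some $x\neq 0$, one deduces via \eqref{1evoorwaarde} that $\lsk v\bt x,x\rsk=0$ for all $v\in V$, then uses the irreducibility of $X_0$ as a $C(Q,u)$-module (Theorem \ref{th:irmodule}) twice --- first to produce $y$ with $\lsk u\bt x,y\rsk\neq 0$, then, after establishing the identity $(x\cdot\lsk u\bt x,y\rsk)\cdot v=x\cdot\lsk u\bt x,y\cdot v\rsk$, to derive the contradiction $x\cdot C(Q,u)=x\cdot V\neq X_0$. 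Your proposal never invokes the Clifford-module irreducibility, which is the essential ingredient here; without it the anisotropy axiom (D2) remains unproved.
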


\begin{proof}We have from Lemma \ref{ident bioctonion}, that $(e_0+e_1)\bt x=x$ for all $x\in X$.  The fact that $X$ is a special $J$-module now follows from Lemma \ref{lem:LJ}. It follows from Theorem \ref{th:class quad alg} that if  $(k,V,Q,u,X_0,\cdot ,h)$ is a quadrangular algebra, it has to be of type $E_6,E_7,E_8$ due to the dimension of $V$.
\subsubsection*{\boldmath $\cha(k)\neq 2,3$}
In $\cha(k)\neq 2,3$ we can use the theory of structurable algebras to prove \eqref{skew id} and \eqref{1evoorwaarde}.

In \cite[Remark 6.7]{ABG} it is pointed out that each structurable algebra $\A$ (in our case $C_1\otimes_k C_2$) with an invertible skew element is a $J$-ternary algebra with $J=L_\Ss L_r\subset \End_k(\A)^+$. The action of the Jordan algebra on $X$ and the skew bilinear map are defined as in Definition~\ref{def:bil} above; the trilinear product is defined as
\[X\times X\times X\rightarrow X:(x,y,z)\mapsto -V_{x,ry}z:=(x(\overline{y}r))z+(z(\overline{y}r))x+(z\overline{x})(ry).\]

 \cite{ABG} only considers fields of characteristic 0. We checked that every structurable algebra, with an invertible skew element, is a $J$-ternary algebra, in characteristic different from 2 and 3. This proof is omitted in \cite[Remark 6.7]{ABG} and uses deep identities in structurable algebras. We thank Bruce Allison for giving us a detailed explanation on how to prove this fact. 

For the proof of \eqref{2evoorwaarde} we refer to the general characteristic case below. It now follows from Theorem \ref{th: Jtern to quadr} that $(k,V,Q,u,X_0,\cdot ,h)$ is a quadrangular algebra.

\subsubsection*{\boldmath $\cha(k)\neq 2$}
We first verify that the identity in the right hand side of \eqref{skew id} holds, this takes a rather lengthy but straightforward computation:

Since the condition is linear in $x$ and $y$, one can choose $x=x_1\otimes x_2$ and $y=y_1\otimes y_2$ for $x_1,y_1\in C_1, x_2, y_2\in C_2$. Let $s=s_1\otimes 1+1\otimes s_2\in \Ss$ and denote $r=r_1\otimes 1+1\otimes r_2$, instead of using its definition with coordinates. Using Remark \ref{expl form X0}.\eqref{ii} it is not hard to show that the following identities hold for $i\in \{1,2\}$\begin{compactitem}
\item $ f_i(s_i,\psi(x_i,y_i))=-2f_i(s_ix_i,y_i)$,
\item $\psi(s_ix_i,y_i)+\psi(s_iy_i,x_i)=2s_i f_i(x_i,y_i)$,
\item $f_i(s_i(r_ix_i),y_i)+ f_i(s_i(r_iy_i),x_i)=-f_i(s_i,r_i)f_i(x_i,y_i)$.
\end{compactitem}
Using these identities, \eqref{skew id} can be simplified to
\begin{multline*}\psi(s_i(r_ix_i),y_i)-\psi(s_i(r_iy_i),x_i)\\=-f_i(r_i,\psi(x_i,y_i))s_i+f_i(s_i,\psi(x_i,y_i))r_i-f_i(s_i,r_i)\psi(x_i,y_i),\end{multline*}
and this identity can be checked using  Lemma \ref{identities octonion}, especially  the Moufang identities \eqref{moufang id}.

We were not able to verify  \eqref{1evoorwaarde} by hand. The problem is that \eqref{1evoorwaarde} has degree 3 in $x$, so we can not assume that $x$ is of the form $e_0\bt (x_1\otimes x_2)$. 
We did a computation based on a coordinatization of $X$,  we used the software \cite{sage} to do the symbolic computations.

Now $x$ is an arbitrary element in $X_0=e_0\bt X, $ therefore $x$ is a sum of elements of the form $x_1\otimes x_2+\frac{1}{a} \ii_1x_1\otimes \ii_2 x_2$ (see Remark \ref{expl form X0}). We implemented octonions and the tensor product of two octonions in Sage in a symbolic way, and we verified that \eqref{1evoorwaarde} holds.

The only fact that remains to be verified is \eqref{2evoorwaarde}. In fact, this is exactly axiom (D2) and in the proof of Theorem \ref{constr quad alg} the condition \eqref{2evoorwaarde} is not used to prove any of the other axioms. Since we already know that the axioms A-B-C-D1 are true, we will use these to prove $\lsk u\bt x,x\rsk \neq0$ for all $x\in X_{0}\setminus\{0\}$.

First we show that 
\begin{align}\label{anisX0}
\text{there exists an }x\in X_0\text{ such that }\lsk u\bt x,x\rsk \neq0.\end{align}
 Let $x=e_0\bt (x_1\otimes x_2)\in X_0, u=s_1\otimes 1+1\otimes s_2 \in V$, with some calculation using Lemma \ref{identities octonion}, Remark \ref{expl form X0} and the coordinate expression for $r$ we find that
\[\lsk u\bt x,x\rsk=\frac{1}{4a}(q_2(x_2)\psi(s_1x_1,\ii_1x_1)\otimes 1+1\otimes q_1(x_1)\psi(s_2x_2,\ii_2x_2)).\]

Since $C_1$ and $C_2$ are division algebras, it is enough to show that for all $y\in C_1\minuszero$ we have $\psi(s_1y,\ii_1y)\neq 0$. We assume that $y\neq 0$ and $\psi(s_1y,\ii_1y)= 0$ and deduce a contradiction.
\begin{align*}
&&\psi(s_1y,\ii_1y)&= 0\\
&\Rightarrow& (s_1y)(\overline{y}\ii_1)-(\ii_1y)(\overline{y}s_1)&=0\\
&\Rightarrow& 2(s_1y)(\overline{y}\ii_1)&=f_1(\ii_1y,s_1y)1&\text{ since } y\overline{z}+z\overline{y}=f_1(y,z)1\\
&\Rightarrow& s_1y&=\half f_1(\ii_1y,s_1y)(\overline{y}\ii_1)^{-1}& \text{ since }\overline{y}\ii_1\neq 0\\
&\Rightarrow& s_1y&=-\frac{f_1(\ii_1y,s_1y)}{2 q_1(\overline{y} \ii_1)} \ii_1 y&\text{ since }y^{-1}=\frac{1}{q_1(y)}\overline{y}\\
&\Rightarrow& s_1&=-\frac{f_1(\ii_1y,s_1y)}{2 q_1(\overline{y} \ii_1)} \ii_1.
\end{align*}
This is a contradiction since $s_1\perp \ii_1$.

The rest of the proof is inspired by the proof given in \cite[Theorem 13.47]{TW}.

We fix an arbitrary $x\neq 0\in X_0$, notice that we no longer assume that $x$ has the form $e_0\bt (x_1\otimes x_2)$. We suppose that $\lsk u\bt x,x\rsk=0$ and aim to get a contradiction. It follows \footnote{ Since $q_A$ is anisotropic on $V:$ $v\bt x=0\iff v\bt v\bt x=q_A(v) x=0\iff v=0$ or $x=0$.} from \eqref{1evoorwaarde} that $\lsk v\bt x,x\rsk=0$ for all $v\in V$.

We first show that there exists an element $y\in X_0$ such that $\lsk u\bt x,y\rsk\neq0$. Suppose that $\lsk u\bt x, X_0\rsk=0$. It follows from (B2) that for all $y\in X_0, v\in V$
\begin{align*}
\lsk u\bt x,v\bt(u\bt y)\rsk&=\lsk u\bt y,v\bt(u\bt x)\rsk\\
&=-\lsk v\bt(u\bt x), u\bt y\rsk\\
&=-U_u \lsk u\bt(v\bt(u\bt x)), y\rsk
\end{align*}
Therefore $\lsk u\bt(x\cdot v),X_0\rsk =0$ and by repeating this procedure we obtain \[\lsk u\bt(x\cdot C(Q,u)), X_0\rsk=0.\] From Definition \ref{def:irmodule}.\eqref{cq1module} and Theorem \ref{th:irmodule} it follows that $X_0$ is an irreducible $C(Q,u)$-module, therefore we obtain $\lsk u\bt X_0, X_0\rsk=0$. This contradicts \eqref{anisX0}.

From now on we assume that $y\in X_0$ is such that $\lsk u\bt x,y\rsk\neq0$. Next we show that
\begin{align} ( x\cdot \lsk u\bt x,y\rsk)\cdot v= x\cdot \lsk u\bt x, y\cdot v\rsk.\label{eq:anis1}
\end{align}

Since this identity is trivial for $u=v$, we assume $v\perp u$. Then \eqref{eq:anis1} is equivalent to
\begin{align}
&\iff&v\bt u\bt \lsk u\bt x,y\rsk\bt u\bt x&= \lsk u\bt x, v\bt u\bt y\rsk\bt u\bt x\notag\\
&\iff&v\bt U_u\lsk u\bt x,y\rsk\bt x&= -\lsk u\bt x, u\bt v\bt y\rsk\bt u\bt x\notag\\
&\iff&v\bt\lsk x, u\bt y\rsk\bt x&= -U_u\lsk  x,  v\bt y\rsk\bt u\bt x\notag\\
&\iff&v\bt\lsk x, u\bt y\rsk\bt x&= -u\bt\lsk  x,  v\bt y\rsk\bt x\notag\\
&\iff& v\bt u\bt \lsk u\bt y,x\rsk\bt x&=  \lsk v\bt y, x\rsk \bt x.\label{eq:anis2}
\end{align}

We consider  \eqref{1evoorwaarde} for $y+tx$ for a parameter $t\in k$, we compare the terms that have degree one in $t$ using the assumption that $\lsk v\bt x,x\rsk=0$ for all $v\in V$, and we get
\begin{align*}
&&\lsk v\bt x,y\rsk \bt x+\lsk v\bt y,x\rsk \bt  x&=v\bt u\bt(\lsk u\bt x,y\rsk+\lsk u\bt y,x\rsk) \bt x\\
&\iff& 2\lsk v\bt y,x\rsk \bt  x+2(v \lsk x,y\rsk) \bt x&=2v\bt u\bt(\lsk u\bt y,x\rsk+(u\lsk x,y\rsk)) \bt x\\
&\iff& \lsk v\bt y,x\rsk \bt  x&=v\bt u\bt\lsk u\bt y,x\rsk \bt x,
\end{align*}
since $v \lsk x,y\rsk\in J_1$ and $x\in X_0$. This proves \eqref{eq:anis2}.

From \eqref{eq:anis1} we have $(x\cdot \lsk u\bt x,y\rsk)\cdot V \subseteq x\cdot V$; since $\lsk u\bt x,y\rsk\neq 0$, the dimension of those two vector spaces is equal and we find that 
\begin{align}\label{eq:anis3}
(x\cdot \lsk u\bt x,y\rsk)\cdot V =x\cdot \lsk u\bt x, y\cdot V\rsk= x\cdot V.
\end{align} 
For arbitrary $w\in V$ it follows from \eqref{eq:anis1} that 
\[( x\cdot \lsk u\bt x,y\cdot w\rsk)\cdot v= x\cdot \lsk u\bt x, (y\cdot w)\cdot v\rsk\in x\cdot V.\]
From \eqref{eq:anis3} we find that $(x\cdot V)\cdot V= x\cdot V$ and hence 
\[x\cdot C(q,u)=x\cdot V\neq X_0\]
contradicting the irreducibility of $X_0$. This finishes the proof of Theorem \ref{th:E6E7E8}.
\end{proof}

\begin{remark}
The map $g:X_0\times X_0\rightarrow k:(x,y)\mapsto \half f(h(y,x),1)$ takes an elegant expression. Indeed,
\begin{align*}
g(x,y)e_0&=\half f(\lsk u\bt y,x\rsk ,u)e_0\\
&=(\lsk u\bt y,x\rsk u)e_0\\
&=\half (\lsk y,x\rsk+\lsk u\bt y,u\bt x\rsk)e_0.
\end{align*}
Since $\lsk y,x\rsk\in k e_0$ and $\lsk u\bt y,u\bt x\rsk\in ke_1$, we conclude that $g(x,y)e_0=\half \lsk y, x\rsk$. When
we identify $k$ and $ke_0$, we have
\[g(x,y)=\half \lsk y, x\rsk.\]
\end{remark}
It follows from the previous theorem that we have, in characteristic not~2, a new coordinate-free definition of the various maps introduced in \cite[Chapter 13]{TW}.

\begin{remark}
The reader might wonder what will happen if we apply our construction in the case that both $C_1$ and $C_2$ are composition algebras of dimension $2$ or $4$ with mutual linkage number $1$.
In the three different cases that arise in this way, we get the following dimensions for the different relevant vector spaces.
\[
\begin{array}{c|c|c|c|}
& E \otimes E & E \otimes Q & Q_1 \otimes Q_2 \\
 \hline
 \dim_k\Ss &2&4&6\\
 \dim_k L & 0&2&4\\
 \hline
\dim_k( C_1\otimes_kC_2 )&4&8&16\\
\dim_k \widetilde{X} & 2&4&8
\end{array}\]
In the first case, the vector space $L$ is trivial, so our construction no longer applies (we cannot find an element $u \in V \setminus \{ 0 \}$
needed in Definition~\ref{def:Je0e1u}).

In the second case, the space $\tilde X$ gets the structure of a $2$-dimensional vector space over $E$, and the corresponding quadrangular algebra
is isomorphic to a quadrangular algebra of pseudo-quadratic form type with underlying vector space $\tilde X$.
Notice that $E \otimes Q \cong M_2(E)$ since $E$ and $Q$ are $1$-linked.

Similarly, in the third case, the space $\tilde X$ gets the structure of a $2$-dimensional vector space over a quaternion division algebra $Q_3$, and the corresponding quadrangular algebra is isomorphic to a quadrangular algebra of pseudo-quadratic form type with underlying vector space $\tilde X$.
The algebra $Q_3$ is the quaternion algebra with norm form similar to $q_A$, and in this case $Q_1 \otimes Q_2 \cong M_2(Q_3)$.
\end{remark}

\subsubsection{A final remark}

In an earlier paper \cite{BD}, we had found another related but quite different class of structurable algebras that seems to play an important role in
the understanding of the exceptional Moufang quadrangles of type $E_6$. $E_7$ and $E_8$.
That structurable algebra is, in each case, obtained by {\em doubling} another algebra (instead of {\em halving} an algebra as we did in the current paper).

More precisely, to each Moufang quadrangle $\Omega$ of type $E_6$, $E_7$ or $E_8$, we can associate a structurable algebra $Y$, the isotopy class of which is a complete invariant of the Moufang quadrangle $\Omega$, and which is obtained by applying the so-called Cayley--Dickson doubling process on the Jordan algebra $A^+$, where
\begin{compactenum}[(i)]
\item $A$ is a quaternion algebra $Q$ if $\Omega$ is of type $E_6$;
\item $A$ is a tensor product $Q \otimes E$ with $Q$ a quaternion algebra and $E$ a quadratic extension, if $\Omega$ is of type $E_7$;
\item $A$ is a biquaternion algebra $Q_1 \otimes Q_2$ if $\Omega$ is of type $E_8$.
\end{compactenum}
However, we are not yet aware of a direct way of relating the structurable algebra $Y$ with the structurable algebra $X = C_1 \otimes C_2$ which we have investigated in the current paper.

\section{A unified construction for Moufang quadrangles in characteristic not 2}\label{se:quadrangularsystems} \label{sec4}

\subsection{Preliminaries on Moufang quadrangles}
A {\em Moufang polygon} is a notion from incidence geometry introduced by Jacques Tits.
We only give a brief summary of the theory of Moufang quadrangles, and we refer to \cite{TW} for more details.
The importance will immediately become clear in Theorem~\ref{th:mpol} below.

A {\em generalized quadrangle} $\Gamma$ is a connected bipartite graph with diameter $4$ and girth $8$.
We call a generalized polygon {\em thick} if every vertex has at least $3$ neighbors.
A {\em root} in $\Gamma$ is a (non-stammering) path of length $4$ in $\Gamma$.

Let $\Gamma$ be a thick generalized quadrangle, and let $\alpha = (x_0,\dots,x_4)$ be a root of $\Gamma$.
Then the group $U_\alpha$ of all automorphisms of $\Gamma$ fixing all neighbors of $x_1, x_2, x_{3}$ (called a {\em root group})
acts freely on the set of vertices incident with $x_0$ but different from $x_1$.
If $U_\alpha$ acts transitively on this set (and hence regularly), then we say that $\alpha$ is a {\em Moufang root}.

A {\em Moufang quadrangle} is a generalized quadrangle for which every root is Moufang.
We then also say that $\Gamma$ satisfies the {\em Moufang condition}.

Moufang quadrangles have been classified by J.~Tits and R.~Weiss \cite{TW}.
Loosely speaking, the result is the following.
\begin{theorem}[\cite{TW}]\label{th:mpol}
	Every Moufang quadrangle arises from an absolutely simple linear algebraic group of relative rank $2$,
	or from a corresponding classical group or group of mixed type.
\end{theorem}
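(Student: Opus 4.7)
The plan is to follow the Tits--Weiss strategy: reduce the geometric Moufang condition to a purely algebraic classification problem, enumerate the possible algebraic structures, and then match each of them to an algebraic group. First I would fix an apartment (an ordinary $8$-cycle) in the Moufang quadrangle $\Gamma$, label the four root groups $U_1, U_2, U_3, U_4$ attached to the four roots based in this apartment, and use the Moufang condition to identify each $U_i$ with an abelian or step-$2$ nilpotent group acting sharply transitively on the appropriate set of panels. The commutator relations $[U_i, U_j] \subseteq \langle U_{i+1}, \ldots, U_{j-1}\rangle$ for $i < j$ then encode the quadrangle completely.

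Next I would turn these commutator relations into algebraic identities. After suitable normalizations of the parametrizations $U_i \cong V_i$, the commutator $[u_1(a), u_4(b)]$ becomes a map $V_1 \times V_4 \to V_2 \times V_3$ satisfying a short list of bilinear and quadratic identities. One proves, case by case, that the resulting data must be one of: a pointed anisotropic quadratic space (giving the ``quadratic'' quadrangles), an involutory set (giving the hermitian/unitary quadrangles), an indifferent set (only in characteristic $2$), an anisotropic pseudo-quadratic space over a quadratic pair (see Section~\ref{def:pqs}), or an exceptional quadrangular algebra of type $E_6, E_7, E_8$ or $F_4$. Conversely, each such algebraic structure is shown to give rise to a Moufang quadrangle by an explicit construction of the $U_i$.

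Then I would match each family to an algebraic group. The quadratic, involutory and pseudo-quadratic cases are directly identified with the standard coordinatizations of classical groups (orthogonal, unitary, symplectic and their twisted forms) of relative rank $2$; the indifferent case yields the groups of mixed type; and the exceptional quadrangular algebras correspond to the Tits indices $\,^{2}\!E_6$, $E_7$, $E_8$ and $F_4$ of rank $2$ displayed in the introduction. This last identification is the main obstacle: the coordinatization in \cite[Chapter 13]{TW} for the $E_6, E_7, E_8$ case is intricate and ad hoc, and providing a conceptual replacement for it is the very motivation for the present paper (see Theorem~\ref{th:E6E7E8}).

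Finally, one checks that the list of algebraic structures obtained from the case analysis is exhaustive, and that no two entries give isomorphic quadrangles except in the known overlaps, so that the correspondence between Moufang quadrangles and (anisotropic kernels of) rank-$2$ absolutely simple algebraic groups, classical groups, and groups of mixed type is in fact a bijection. This closes the classification.
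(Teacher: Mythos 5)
The paper does not actually prove this statement---it is quoted ``loosely speaking'' from the Tits--Weiss monograph \cite{TW}---and your outline is a faithful roadmap of exactly the strategy that monograph follows (root group sequences and their commutator relations, case-by-case determination of the parametrizing algebraic structures, identification of each family with the corresponding groups), so it takes the same approach as the paper's source. Be aware only that each of your steps compresses a very long stretch of \cite{TW}, and that the type-$F_4$ quadrangles, which exist only in characteristic $2$, arise from groups of mixed type rather than from a rank-$2$ Tits index of an absolutely simple linear algebraic group.
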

In particular, every Moufang quadrangle is of ``algebraic origin'', and in fact, the Moufang quadrangles provide a useful tool
to help in the understanding of the corresponding groups; this is particularly true for the Moufang quadrangles arising
from linear algebraic groups of exceptional type.
For instance, the Kneser--Tits problem for groups of type $E_{8,2}^{66}$ has recently been solved using the theory of Moufang polygons \cite{PTW}.

In order to describe a Moufang quadrangle in terms of algebraic data, we will use so-called {\em root group sequences}.
A root group sequence for a Moufang quadrangle is a sequence of $4$ root groups, labeled $U_1,\dots,U_4$, together with
{\em commutator relations} describing how elements of two different root groups $U_i$ and $U_j$ commute.
In each case, the commutator of an element of $U_i$ and $U_j$ (with $i<j$) belongs to the group $\langle U_{i+1},\dots, U_{j-1} \rangle$.
The following result is crucial.
\begin{theorem}\label{th: class mouf}
	Let $\Gamma$ be a Moufang quadrangle.
	Then $\Gamma$ is completely determined by the root groups $U_1,\dots,U_4$ together with their commutator relations.
\end{theorem}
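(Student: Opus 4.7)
The plan is to reconstruct $\Gamma$ up to isomorphism as a coset geometry of the group generated by all of its root groups, starting from $U_1,\ldots,U_4$ together with the prescribed commutator relations. Fix in $\Gamma$ an apartment $\Sigma$ and a root $\alpha = (x_0,\ldots,x_4)$ in $\Sigma$; the eight roots of $\Sigma$ split into two classes of four ``positive'' and four ``negative'' roots, and $U_1,\ldots,U_4$ are the root groups of the four positive roots.

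First I would use the commutator relations to construct abstractly the group $U_+ := U_1 U_2 U_3 U_4$, exploiting the fact that each commutator $[U_i,U_j]$ with $i<j$ lies in $\langle U_{i+1},\ldots,U_{j-1}\rangle$: this triangular shape forces the product map $U_1 \times U_2 \times U_3 \times U_4 \to U_+$ to be bijective, and the group law on the product is then determined by the commutator data via a Hall-type collection process. A key geometric fact, proved in the Tits--Weiss analysis of Moufang quadrangles, is that $U_+$ acts sharply transitively on the set of apartments containing a fixed chamber of $\Sigma$ (equivalently, on the set of chambers opposite a fixed chamber). Hence this principal homogeneous space is read off from $U_+$.

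Next I would recover the opposite root groups $U_{-1},\ldots,U_{-4}$ and thereby the full group $G^\dagger := \langle U_\gamma \mid \gamma \text{ a root of } \Gamma \rangle$. For each $u \in U_i\setminus\{1\}$ the Moufang axiom provides a unique element $\mu(u) \in U_{-i}\, u\, U_{-i}$ interchanging $U_i$ and $U_{-i}$, and the conjugation action of $\mu(u)$ on each $U_j$ (the ``Hua map'') is, by a classical argument, completely determined by the commutator relations in $U_+$ applied to $\mu(u) \cdot U_j \cdot \mu(u)^{-1}$. This permits an abstract definition of $U_{-i}$ and of $G^\dagger$ by generators and relations built solely from $U_1,\ldots,U_4$ and the formal symbols $\mu(u)$. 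Once $G^\dagger$ is in hand, one reconstructs $\Gamma$ as the coset geometry whose two classes of vertices are $G^\dagger/P_0$ and $G^\dagger/P_1$, where $P_i$ is the stabilizer of $x_i$, with incidence given by non-empty intersection of cosets.

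The main obstacle is the verification that the Hua maps $\mu(u)$ are well-defined and that their action on each $U_j$ is genuinely dictated by the given commutator relations; this is the heart of the reconstruction and relies on the delicate combinatorial bookkeeping for the root system of type $C_2$ carried out in \cite[Chapters 6--7]{TW}. Once this is established, both the construction of $G^\dagger$ and the passage from $G^\dagger$ back to $\Gamma$ are formal, and any isomorphism between two sets of data $(U_1,\ldots,U_4;\text{relations})$ extends functorially to an isomorphism of the corresponding Moufang quadrangles, giving the desired uniqueness.
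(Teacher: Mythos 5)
Your outline is a legitimate reconstruction strategy, but it is not the route the paper takes: the paper's proof of this theorem is simply a citation to \cite[Chapter~7]{TW}, and the argument there is leaner than yours. Tits and Weiss never pass to the negative root groups, the $\mu$-maps, or the full group $G^\dagger$. Instead they show (7.3--7.4) that the product map $U_1\times\cdots\times U_4\to U_+$ is bijective and that the vertices of $\Gamma$ at each distance from the fixed half-apartment are parametrized by cosets of the subgroups $U_{[i,j]}=\langle U_i,\dots,U_j\rangle$ of $U_+$ itself, with adjacency expressible group-theoretically; an isomorphism of root group sequences then extends to an isomorphism of quadrangles (7.5). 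Everything happens inside $U_+$, so only the data named in the theorem is ever used. Your detour through $G^\dagger$ is closer to the Steinberg-presentation/RGD-system picture and to the \emph{existence} half of the classification (Chapters~6, 8 and 32--33 of \cite{TW}); it buys a description of the whole little projective group, but at a cost: the assertion that the conjugation action of $\mu(u)$ on each $U_j$ is ``completely determined by the commutator relations in $U_+$'' is exactly the kind of statement that is usually \emph{derived from} the uniqueness of the polygon rather than used to prove it, so as written your argument risks circularity at that point. Since you ultimately defer that step to \cite[Chapters 6--7]{TW} anyway, the safer and shorter path is the coset-geometry-inside-$U_+$ argument of Chapter~7, which needs no $\mu$-maps at all.
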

\begin{proof}
	See \cite[Chapter 7]{TW}.
\end{proof}
For more details about this procedure, and how the Moufang polygons can be reconstructed from the root group sequences,
we refer to \cite{TW} or to the survey article \cite{DV}.

For each type of Moufang quadrangle, we will describe an {\em algebraic structure} which will allow us to parametrize the root groups and describe the commutator relations.

In principle, it is possible to define a single algebraic structure to describe all possible Moufang quadrangles;
this gives rise to the so-called {\em quadrangular systems} which have been introduced by the second author \cite{D}.
These structures, however, have some disadvantages from an algebraic point of view; most notably, the definition does not
mention an underlying field of definition (although it is possible to construct such a field from the data), and
the axiom system looks very wild and complicated, with no less than $20$ defining identities.

Here follows the original classification as given by Tits and Weiss in \cite{TW}, distinguishing six different
(non-disjoint) classes:
\begin{compactenum}[(1)]
\item Moufang quadrangles of indifferent type;
\item Moufang quadrangles of quadratic form type;
\item Moufang quadrangles of involutory type;
\item Moufang quadrangles of pseudo-quadratic form type;
\item Moufang quadrangles of type $E_6$, $E_7$ and $E_8$;
\item Moufang quadrangles of type $F_4$.
\end{compactenum}
The Moufang quadrangles of types (2)--(4) are often called {\em classical}, those of type (5) and (6) are called {\em exceptional}
and those of type (1) are of {\em mixed type}.
Since the Moufang quadrangles of type (1) and (6) only exist over fields of characteristic two, and moreover are not directly related
to rank two forms of algebraic groups, we exclude those two classes from our further discussion.

In the following section we will give a uniform description of the remaining 4 classes of Moufang quadrangles, over fields of characteristic different from 2,  starting from a special Jordan module.

\subsection{Construction of Moufang quadrangles from special Jordan modules}
We will show that each type of Moufang quadrangle in characteristic not 2 can be described in a unified way from a special $J$-module. We generalize the procedure that we used in Theorem \ref{constr quad alg} to obtain quadrangular algebras. In order to obtain all Moufang quadrangles we allow that $\dim(J_0)>1$ and we allow the special $J$-module to be the trivial module.
It follows from Theorem~\ref{th: class mouf} that it is sufficient to describe the $4$ root groups and the commutator relations of the root groups to describe the Moufang quadrangle completely.

\begin{construction}\label{mouf quadr}
Let $J$ be a non-degenerate Jordan algebra that contains supplementary proper idempotents $e_0$ and $e_1$. Let $J_0,J_\subhalf,J_1$ be the Peirce subspaces of $J$ with respect to $e_1$. We assume that each element in $J_\subhalf\setminus\{0\}$ is invertible and that there exists $u\in J_\subhalf$ such that $u^2=1$.

Let $X$ be a special $J$-module equipped with a skew-symmetric bilinear form $\lsk \cdot , \cdot \rsk:X\times X\rightarrow J$. 
\begin{compactitem}
\item Define the abelian group $V:=(J_\subhalf,+)$.
\item Define the (not necessary abelian) group $W:=X_0\times J_0$ with addition \[[a_1,t_1]\boxplus[a_2,t_2]=[a_1+a_2,t_1+t_2+\half \lsk a_2,a_1\rsk].\]

Notice that the inverse is $\boxminus[a,t]=[-a,-t]$.
\end{compactitem}

Let $U_1$ and $U_3$ be two groups isomorphic to $W$, and
let $U_2$ and $U_4$ be two groups isomorphic to $V$.
Denote the corresponding isomorphisms by
\begin{center}
	\begin{tabular}{r@{ : }c@{ : }l@{ ;}l}
		$x_1$ & $W \to U_1$  & $[a,t] \mapsto x_1(a,t)$ & \\
		$x_2$ & $V \to U_2$       & $v \mapsto x_2(v)$ & \\
		$x_3$ & $W \to U_3$  & $[a,t] \mapsto x_3(a,t)$ & \\
		$x_4$ & $V \to U_4$       & $v \mapsto x_4(v)$ & 
	\end{tabular}
\end{center}
we say that $U_1$ and $U_3$ are {\em parametrized} by $W$ and that $U_2$ and $U_4$ are
{\em parametrized} by $V$.

Now, we implicitly define the group $U_+ = \langle U_1,U_2,U_3,U_4\rangle$ by the following
commutator relations:
\begin{align*}
	[x_1(a_1,t_1), x_3(a_2,t_2)^{-1}] &= x_2\bigl(\lsk u\bt a_1,a_2\rsk\bigr) \ , \\
	[x_2(v_1), x_4(v_2)^{-1}] &= x_3\bigl(0,2(v_1v_2)e_0\bigr) \ , \\
	[x_1(a,t), x_4(v)^{-1}] &= x_2\bigl(\half \lsk u\bt a, v\bt(u\bt a)\rsk+2 (U_ut)v\bigr) x_3\bigl(v\bt(u\bt a),U_vU_ut\bigr) \ , \\
	[U_i, U_{i+1}] &= 1 \quad \forall i \in \{ 1, 2, 3 \} \ ,
\end{align*}
for all $[a,t], [a_1,t_1], [a_2,t_2] \in W$ and all $v, v_1, v_2 \in V$.
\end{construction}

It follows from Lemma \ref{char red spin} that $J$ should be either of reduced spin type or of type $\mathcal{H}(M_2(L),\si t)$.
For each of these two cases, we will distinguish between the zero special $J$-module and a non-zero $J$-module. Case by case, we will show that in this way the root groups $U_1,U_2,U_3,U_4$ and commutation relations given above coincide with the description given in Chapter 16 of \cite{TW} of the Moufang quadrangles in characteristic not 2. 

\begin{remark}
In \cite{Dmem} quadrangular systems are introduced. These are structures that are defined by 24 axioms, which describe in a unified way all Moufang quadrangles (including \chacha\ 2.) We believe it should be possible to start with Construction \ref{mouf quadr}, impose a few more axioms that look like the ones in Theorem \ref{constr quad alg} and prove all the axioms defining a quadrangular system. However the verifications of the axioms that use the map $\kappa$, this is a kind of  ``multiplicative inverse" in the group $W$, get very complicated.
\end{remark}

\paragraph{Moufang quadrangles of quadratic form type}

Let $J$ be a reduced spin factor of an anisotropic, non-degenerate quadratic space $(k,V,q)$ with base point $u$. Let $X$ be the zero module over $J$.

Remember that $J_0=ke_0,J_\subhalf=V, J_1=ke_1$. 
\begin{compactitem}
\item Define the abelian group $V=(J_\subhalf,+)$.
\item Define the group $W=X_0\times J_0=\{[0,t e_0]|t \in k\}\cong k$ with addition $[0,t_1 e_0]\boxplus[0,t_2 e_0]=[0,(t_1+t_2)e_0]$. Therefore $W$ is isomorphic to the additive group of $k$ with corresponding isomorphism $W\cong k:[0,te_0]\leftrightarrow t$. So we will write $x_1(t):=x_1(0,te_0)$ and $x_3(t):=x_3(0,te_0)$.
\end{compactitem}
Let $U_1$ and $U_3$ be parametrized by $W$ and $U_2$ and $U_4$ be parametrized by $V$. Let $t,t_1,t_2\in k$, $v,v_1,v_2\in V$; using the formulas for the multiplication and the $U$-operator in a Jordan algebra of reduced spin type (see Definition \ref{def: redspin}) we find for the commutator relations
\begin{align*}
	[x_1(t_1), x_3(t_2)^{-1}] &=[x_1(0,t_1e_0), x_3(0,t_2e_0)^{-1}]= x_2\bigl(0)=1 \ , \\
	[x_2(v_1), x_4(v_2)^{-1}] &= x_3\bigl(0,f(v_1,v_2)e_0\bigr)=x_3\bigl(f(v_1,v_2)\bigr) \ , \\
	[x_1(t), x_4(v)^{-1}] &=[x_1(0,te_0), x_4(v)^{-1}]= x_2\bigl(2 (U_ut e_0)v\bigr) x_3\bigl(0,U_vU_ut e_0\bigr)
	\\&=  x_2\bigl(2 (t e_1)v\bigr) x_3\bigl(0,U_vt e_1\bigr)= x_2\bigl( t v\bigr) x_3\bigl(0,q(v)t e_0\bigr)\\
	&=x_2\bigl( t v\bigr) x_3\bigl(q(v)t\bigr)\ , \\
	[U_i, U_{i+1}] &= 1 \quad \forall i \in \{ 1, 2, 3 \} \ .
\end{align*}

We obtain exactly the  same description as in \cite[Example 16.3]{TW}.

If $d = \dim_K V$ is finite, then these Moufang quadrangles arise from linear algebraic groups;
they are of absolute type $\mathsf{B}_{\ell+2}$ if $d = 2\ell + 1$ is odd, and of type $\mathsf{D}_{\ell+2}$ if $d = 2\ell$ is even.

\paragraph{Moufang quadrangles of type $E_6,E_7,E_8$}

This case was actually already handled in Theorem \ref{th:E6E7E8}, since from quadrangular algebras one can define the root groups and commutation relations of the corresponding Moufang quadrangles, see \cite[Chapter 11]{W}. Now we quickly verify that we get indeed the right commutator relations using Construction \ref{mouf quadr}.

Let $J$ be a reduced spin factor of an anisotropic, non-degenerate quadratic space $(k,V,q)$ with base point $u$, let $X=C_1\otimes_k C_2$ and let the skew-symmetric form $\lsk.,.\rsk$ be as in Section~\ref{sec:E6E7E8}. Since quadrangular algebras of type $E_6,E_7$ and $E_8$ are determined by the similarity class of their there quadratic space. We have that the following maps coincide with the maps defined in \cite[Chapter 13]{TW}: 
\[a\cdot v=v\bt(u\bt a), \quad h(a,b)=\lsk u\bt a,b\rsk, \quad g(a,b)e_0=\half \lsk b,a\rsk.\]
Now define
\begin{compactitem}
\item the abelian group $V=(J_\subhalf,+)$;
\item the group $W=X_0\times J_0\cong X_0\times k$ with addition $[a_1,t_1e_0]\boxplus[a_2,t_2e_0]=[a_1+a_2,t_1e_0+t_2e_0+\half \lsk a_2,a_1\rsk]$. When we identify $J_0 \cong k: t e_0\leftrightarrow t$, we get \[[a_1,t_1]\boxplus[a_2,t_2]=[a_1+a_2,t_1+t_2+g( a_1,a_2)].\]
We will write $x_1(a,t):=x_1(a,te_0)$ and $x_3(a,t):=x_3(a,te_0)$.
\end{compactitem}
Let $U_1$ and $U_3$ be parametrized by $W$ and $U_2$ and $U_4$ be parametrized by $V$. Let $t,t_1,t_2\in k$, $v,v_1,v_2\in V$; we find the following commutator relations:
\begin{align*}
	[x_1(a_1,t_1), x_3(a_2, t_2)^{-1}] &= x_2\bigl(\lsk u\bt a_1, a_2 \rsk \bigr)=x_2\bigl(h( a_1,a_2) \bigr) \ , \\
	[x_2(v_1), x_4(v_2)^{-1}] &= x_3\bigl(0,f(v_1,v_2)e_0\bigr)= x_3\bigl(0,f(v_1,v_2)\bigr) \ , \\
	[x_1(a,t), x_4(v)^{-1}] &= [x_1(a,t e_0), x_4(v)^{-1}] \\
	&=x_2\bigl(\half \lsk u\bt a,a\cdot v\rsk +2 (t e_1)v\bigr) x_3\bigl(a\cdot v,U_vt e_1\bigr)\\
	&= x_2\bigl(\theta(a,v)+t v\bigr) x_3\bigl(a\cdot v,q(v)t \bigr)\ , \\
	[U_i, U_{i+1}] &= 1 \quad \forall i \in \{ 1, 2, 3 \} \ .
\end{align*}

We obtain exactly the  same description as in \cite[Example 16.6]{TW}. The Tits indices of the corresponding linear algebraic groups are as follows.
\[
\begin{tikzpicture}[line width=1pt, scale=.55]
	\node[left=5pt] (0,0) {$^2\!E_6:$};
	\draw (0,0) -- (1,0);
	\draw (1,0) arc (180:90:.5) -- (3,.5);
	\draw (1,0) arc (180:270:.5) -- (3,-.5);
	\diagnode{(0,0)}
	\diagnode{(1,0)}
	\diagnode{(2,.5)} \diagnode{(2,-.5)}
	\diagnode{(3,.5)} \diagnode{(3,-.5)}
	\distorbit{(0,0)}
	\distlongorbit{3}
\end{tikzpicture}
\quad
\begin{tikzpicture}[line width=1pt, scale=.55]
	\node[left=5pt] (0,0) {$E_7:$};
	\draw (0,0) -- (5,0);
	\draw (2,0) -- (2,1);
	\diagnode{(0,0)}
	\diagnode{(1,0)}
	\diagnode{(2,0)} \diagnode{(2,1)}
	\diagnode{(3,0)}
	\diagnode{(4,0)}
	\diagnode{(5,0)}
	\distorbit{(0,0)}
	\distorbit{(4,0)}
\end{tikzpicture}
\quad
\begin{tikzpicture}[line width=1pt, scale=.55]
	\node[left=5pt] (0,0) {$E_8:$};
	\draw (0,0) -- (6,0);
	\draw (2,0) -- (2,1);
	\diagnode{(0,0)}
	\diagnode{(1,0)}
	\diagnode{(2,0)} \diagnode{(2,1)}
	\diagnode{(3,0)}
	\diagnode{(4,0)}
	\diagnode{(5,0)}
	\diagnode{(6,0)}
	\distorbit{(0,0)}
	\distorbit{(6,0)}
\end{tikzpicture}
\]
\paragraph{Moufang quadrangles of involutory type}

As in \cite[chapter 11]{TW}, we define an involutory set in characteristic different from 2, as a triple $(L, L_\sigma, \sigma)$, where
$L$ is a field or a skew-field, $\sigma$ is an involution of $L$, $ L_\sigma=\{\ell\in L\mid \ell^\si=\ell\}=\{\ell+\ell^\si\mid \ell \in L\}$. (Notice that the definition in characteristic 2 is much more involved.)

Let $J=\mathcal{H}(M_2(L),\si t)$ (see Definition \ref{def: matrix}) and let $X$ be the zero module. 
\begin{compactitem}
\item Define the abelian group $V=(J_\subhalf,+)\cong(L,+)$ with isomorphism 
\[\begin{bmatrix}0&\ell^\si\\ \ell &0\end{bmatrix}\leftrightarrow \ell.\]
We will write $x_2(\begin{bmatrix}0&\ell^\si\\ \ell &0\end{bmatrix})=x_2(\ell)$ and $x_4(\begin{bmatrix}0&\ell^\si\\ \ell &0\end{bmatrix})=x_4(\ell)$.
\item Define the group $W=X_0\times J_0=\{[0,\al e_0]|\al \in L_\si\}\cong L_\si$ with addition $[0,\al_1 e_0]\boxplus[0,\al_2 e_0]=[0,(\al_1+\al_2)e_0]$. Therefore $W$ is isomorphic to the additive group of $L_\si$, we use the isomorphism $W\cong L_\si:[0,\al e_0]\leftrightarrow \al$. We will write $x_1(0,\al e_0)=x_1(\al)$ and $x_3(0,\al e_0)=x_3(\al)$.
\end{compactitem}
Let $U_1$ and $U_3$ be parametrized by $W$ and $U_2$ and $U_4$ be parametrized by $V$. Let $\al,\al_1,\al_2\in L_\si$ and consider the following elements of $V$
\[v=\begin{bmatrix}0&\ell^\si\\ \ell&0\end{bmatrix}, v_1=\begin{bmatrix}0&\ell_1^\si\\ \ell_1 &0\end{bmatrix}, v_2=\begin{bmatrix}0&\ell_2^\si \\ \ell_2 &0\end{bmatrix}\in J_\subhalf,\] using the formulas for the multiplication and the $U$-operator in $\mathcal{H}(M_2(L),\si t)$, we find for the commutator relations:
\begin{align*}
	[x_1(\al_1), x_3(\al_2)^{-1}] &= [x_1(0,\al_1e_0), x_3(0,\al_2 e_0)^{-1}]=x_2\bigl(0)=1 \ , \\
	[x_2(\ell_1), x_4(\ell_2)^{-1}] &= [x_2(v_1), x_4(v_2)^{-1}] =x_3\bigl(0,((\ell_1^\si\ell_2+\ell_2^\si\ell_1) e_0+(\ell_1\ell_2^\si+\ell_2 \ell_1^\si)e_1)e_0\bigr)\\
	&=x_3\bigl(0,(\ell_1^\si\ell_2+\ell_2^\si\ell_1)e_0\bigr)=x_3\bigl(\ell_1^\si\ell_2+\ell_2^\si\ell_1\bigr) \ , \\
	[x_1(\al), x_4(\ell)^{-1}] &= [x_1(0,\ell e_0), x_4(v)^{-1}] = x_2\bigl(2 (U_u\al e_0)v\bigr) x_3\bigl(0,U_vU_u\al e_0\bigr)\\
	&=  x_2\bigl(2 (\al e_1)v\bigr) x_3\bigl(0,U_v\al e_1\bigr)= x_2\left(   \begin{bmatrix}0&\ell^\si \al\\ \al\ell &0\end{bmatrix}\right) x_3\bigl(0,\ell^\si \al\ell e_0\bigr)\\
	&=x_2\bigl(\al\ell \bigr) x_3\bigl(\ell^\si \al\ell \bigr)\ , \\
	[U_i, U_{i+1}] &= 1 \quad \forall i \in \{ 1, 2, 3 \} \ .
\end{align*}
This is exactly \cite[Example 16.2]{TW}.
If $L$ is finite-dimensional over its center, of degree $d$, then these Moufang quadrangles arise from algebraic groups;
they are outer forms of $A_{4d-1}$ if the involution is of the second kind,
and they are (inner or outer forms) of absolute type $D_{2d}$ if the involution is of the first kind.

\paragraph{Moufang quadrangles of pseudo-quadratic type}

These are obtained in similar fashion as the quadrangular algebras in Section~\ref{sec:pseudo-quad}, but here we start from an arbitrary skew field with involution instead of starting from a quadratic pair. We repeat part of the setup from Section~\ref{sec:pseudo-quad}.

Let $L$ be a skew-field with involution $\si$. Let $(L,\si,X,h,\pi)$ be a standard pseudo-quadratic space (see Section~\ref{def:pqs}), so $\pi(a)=\half h(a,a)$ for all $a\in X$.

Let $J=\mathcal{H}(M_2(L),\si t)$ (see Definition \ref{def: matrix}) and let $\widetilde{X}=X^2$, the $1\times 2$ row vectors over X. For the action of $J$ on $\Xt$, for $j\in J, a\in \widetilde{X}$ we have $j\bt a=aj\in \widetilde{X}$.

As before we define $e_0=\begin{bmatrix}1&0\\0&0\end{bmatrix}, e_1=\begin{bmatrix}0&0\\0&1\end{bmatrix}, u=\begin{bmatrix}0&1\\1&0\end{bmatrix}\in J$. We have $\Xt_0=\{[a,0]\mid a\in X\}, \Xt_1=\{[0,a]\mid a\in X\}. $

We define the skew product $\Xt\times \Xt\rightarrow J$ as \[\lsk[a_1,a_2],[b_1,b_2]\rsk=\begin{bmatrix} h(a_1,b_1)-h(b_1,a_1)& h(a_1,b_2)-h(b_1,a_2)\\ -h(b_2,a_1)+h(a_2,b_1)& h(a_2,b_2)-h(b_2,a_2) \end{bmatrix}.\] 

\begin{compactitem}
\item Define the abelian group $V=(J_\subhalf,+)\cong(L,+)$ with isomorphism 
\[\begin{bmatrix}0&\ell^\si\\ \ell &0\end{bmatrix}\leftrightarrow \ell.\]
We will write $x_2(\begin{bmatrix}0&\ell^\si\\ \ell &0\end{bmatrix})=x_2(\ell)$ and $x_4(\begin{bmatrix}0&\ell^\si\\ \ell &0\end{bmatrix})=x_4(\ell)$.
\item Define the group $W=\Xt_0\times J_0\cong X\times L_\si$, when we identify $J_0 \cong L_\si: \al e_0\leftrightarrow \al$ and is  $\widetilde{X}_0\cong X:[a,0]\leftrightarrow a$,\footnote{ We have to denote elements of $\widetilde{X}$ and of $W$ both by $[.,.]$, from now on we only will make use of elements in $\Xt_0$ and not those contained in $X$ in general. We use the notation $[.,.]$ exclusively for elements of $W$ from now on.}  we get the addition \[[a_1,\al_1]\boxplus[a_2,\al_2]=[a_1+a_2,\al_1+\al_2+\half (h(a_2,a_1)-h(a_1,a_2))].\]
We will write $x_1(a,\al):=x_1([a,0],\al e_0)$ and $x_3(a,\al):=x_3([a,0],\al e_0)$. 
\end{compactitem}
For the commutator relations we obtain
\begin{align*}
	[x_1(a_1,\al_1), x_3(a_2,\al_2)^{-1}] &= x_2\bigl(h(a_1,a_2)\bigr) \ , \\
	[x_2(\ell_1), x_4(\ell_2)^{-1}] &=x_3\bigl(0,\ell_1^\si\ell_2+\ell_2^\si\ell_1) \ , \\
	[x_1(a,\al), x_4(\ell)^{-1}] &= x_2\bigl(\theta(a,\ell) + \al\ell \bigr) x_3\bigl(a\ell,\ell^\si \al\ell\bigr) \ , \\
	[U_i, U_{i+1}] &= 1 \quad \forall i \in \{ 1, 2, 3 \} \ ,
\end{align*}
for all $[a,t], [a_1,t_1], [a_2,t_2] \in W$ and all $\ell, \ell_1, \ell_2 \in L$.

In \cite[Example 16.5]{TW} $U_1$ and $U_3$ are parametrized  by the subset $T=\{[a,t]\mid \exists \al\in L_\si: \pi(a)=t+\al\}\subset X\times L$. We consider the bijection
\[\phi: T\rightarrow X\times L_\si:[a,t]\mapsto [a,-\pi(a)+t].\]
When we translate the group law and commutator relations in \cite[Example 16.5]{TW} from $T$ to $X\times L_\si$ using $\phi$,  we indeed obtain the expressions written above.

If $L$ is finite-dimensional over its center, of degree $d$, and $X$ is finite-dimensional over $L$,
then these Moufang quadrangles arise from algebraic groups.
If the involution is of the second kind, they are outer forms of absolute type $A_{\ell}$.
If the involution is of the first kind, they are of absolute type $C_{\ell}$ or $D_{\ell}$.

\vspace*{1ex}
\hrule
\vspace*{2ex}

\footnotesize

\noindent
Lien Boelaert,
Department of Mathematics,
Ghent University \\
Krijgslaan 281, S22,
B-9000 Gent, Belgium \\
{\tt lboelaer@cage.UGent.be}

\vspace{2ex}

\noindent
Tom De Medts,
Department of Mathematics,
Ghent University \\
Krijgslaan 281, S22,
B-9000 Gent, Belgium \\
{\tt tdemedts@cage.UGent.be}

\end{document}